\makeatletter \@addtoreset{equation}{section} \makeatother
\begin{document}

\fancyhf{}
\renewcommand{\headrulewidth}{0pt}

\pagestyle{empty}

\newtheorem{theo}{Theorem}[section]
\newtheorem{lem}[theo]{Lemma}
\newtheorem{cor}[theo]{Corollary}
\newtheorem{pro}[theo]{Proposition}
\newtheorem{prop}[theo]{Property}
\newtheorem{defi}[theo]{Definition}

\newcommand{\no}{\nonumber}
\newcommand{\p}{\partial}
\newcommand{\lb}{\lambda}
\newcommand{\vu}{\vec{u}}
\newcommand{\tvu}{\vec{\tilde{u}}}
\newcommand{\e}{\epsilon}
\newcommand{\ra}{\rightarrow}


\begin{center}
{\bf \Large  Extremal Functions of Forbidden Multidimensional Matrices} 

\vspace*{.3in}

{Jesse T. Geneson \\
Department of Mathematics, MIT, Cambridge, MA 02139, USA \\ geneson@math.mit.edu

\bigskip

and

\bigskip

Peter M. Tian \\ 
Department of Mathematics, Harvard University, Cambridge, MA 02138, USA \\ ptian@college.harvard.edu }
\end{center}

\vspace*{.5in}

\begin{abstract}

Pattern avoidance is a central topic in graph theory and combinatorics. Pattern avoidance in matrices has applications in computer science and engineering, such as robot motion planning and VLSI circuit design. A $d$-dimensional zero-one matrix $A$ avoids another $d$-dimensional zero-one matrix $P$ if no submatrix of $A$ 
can be transformed to $P$ by changing some ones to zeros. A fundamental problem is to study the maximum number of nonzero entries in 
a $d$-dimensional $n \times  \cdots \times n$ matrix that avoids $P$. This maximum number, denoted by $f(n,P,d)$,  is called the extremal function.
    
We advance the extremal theory of matrices in two directions. The methods that we use come from combinatorics, probability, and analysis.  
Firstly, we obtain non-trivial lower and upper bounds on $f(n,P,d)$ when $n$ is large for every $d$-dimensional block permutation matrix $P$. We establish the tight bound $\Theta(n^{d-1})$ on $f(n,P,d)$ for every $d$-dimensional tuple permutation matrix $P$. This tight bound has the lowest possible order that an extremal function of a nontrivial matrix can ever achieve. 
Secondly, we show that $f(n,P,d)$ is super-homogeneous for a class of matrices $P$. We use this super-homogeneity to show that 
the limit inferior of the sequence $\{ {f(n,P,d) \over n^{d-1}}\}$ has a lower bound $2^{\Omega(k^{1/ d})}$ for a family of $k \times \cdots \times k$ permutation matrices $P$. We also improve the upper bound on the limit superior from $2^{O(k \log k)}$ to $2^{O(k)}$ for all $k \times \cdots \times k$ 
permutation matrices and show that the new upper bound also holds for tuple permutation matrices. 

\end{abstract}

\newpage

\setcounter{page}{1}
\pagestyle{plain}
\setcounter{section}{0}


\section{Introduction}

Pattern avoidance is a central topic in graph theory and combinatorics \cite{B, Fox, HM, Ki, KM, MT, SA, S, St}. In this paper, we study $d$-dimensional matrices (or arrays) with only two distinct entries, $0$ and $1$,  that avoid certain patterns. We are interested in the extremal problem of finding the largest number of ones in these matrices. 

The extremal theory of matrices was initiated in papers \cite{BG, F, M} around 1990 to study specific problems arising in computational and discrete geometry. Mitchell produced an algorithm for computing a shortest rectilinear path avoiding rectilinear obstacles in the plane \cite{M}. He showed that the complexity of this algorithm is bounded above by the solution of the extremal problem of certain matrices. Bienstock and Gy\"ori \cite{BG} found an upper bound on the solution of the extremal problem,  thus bounding the complexity of the algorithm. Mitchell's algorithm can be directly applied to motion planning in robotics and wire routing in VLSI circuit design \cite{LYW}. F\"{u}redi \cite{F} used the extremal problem to derive an upper bound on Erd\H{o}s-Moser \cite{EM} problem of determining the maximum number of unit distances in a convex polygon.

Matrix extremal problems also find applications to graph theory and other areas of combinatorics. If we associate $2$-dimensional $0$-$1$ matrices with ordered bipartite graphs by relating rows and columns to the two ordered partite sets of vertices and  interpreting ones as edges, then this extremal problem can be viewed as the Tur\'an extremal problem for ordered bipartite graphs \cite{PT}.  The most recent surge in interest in the extremal theory of matrices is due to the resolution of the Stanley-Wilf conjecture in enumerative combinatorics using the extremal problem of matrices \cite{Kl, MT}.

We denote a $d$-dimensional $n_1 \times \cdots \times n_d$ matrix by $A = \left(a_{i_1, \ldots , i_d}\right)$, where $1 \le i_l \le n_{\ell}$ for $\ell = 1, 2, \ldots , d$. Matrix $A$ is called a $ 0$-$1$  matrix if all its entries are either $0$ or $1$. A multidimensional matrix is also called a multidimensional array in computer science. We may view a $d$-dimensional $0$-$1$ matrix $A = \left( a_{i_1, \ldots , i_d} \right)$ geometrically as a $d$-dimensional rectangular box of lattice points with coordinates $(i_1, \ldots, i_d)$.  An $\ell$-cross section of matrix $A$ is the set of all the entries $a_{i_1,\ldots , i_d}$ whose $\ell^{\text{th}}$ coordinates 
have the same value. An $\ell$-row of matrix $A$ is the collection of all the entries $a_{i_1,\ldots , i_d}$ whose coordinates other than the 
$\ell^{\text{th}}$ coordinate have fixed values.

A $d$-dimensional $k  \times \cdots \times k$ zero-one matrix is a permutation matrix if each of its $\ell$-cross sections contains exactly one nonzero entry for every $\ell=1, \ldots , d$.
The Kronecker product of two $d$-dimensional $0$-$1$ matrices $M$ and $N$, denoted by $M \otimes N$, is a 
$d$-dimensional matrix obtained by replacing each 1-entry of $M$ with a copy of $N$ and each 0-entry of $M$ with a zero matrix the same size as $N$. 

We say that a $d$-dimensional $0$-$1$ matrix $A$ contains another $0$-$1$ matrix $P$ if $A$ has a submatrix that can be transformed into $P$ by changing any number of ones to zeros. Otherwise, $A$ is said to avoid $P$.  Denote by $f(n, P, d)$ the maximum number of ones in a $d$-dimensional $n \times \cdots \times n$ zero-one matrix that avoids a given $d$-dimensional $0$-$1$ matrix $P$. We are interested in the asymptotic behavior of the extremal function $f(n,P,d)$ for large $n$.
  
A motivation to study the extremal problem of multidimensional matrices comes from graph theory. A $2$-dimensional $n \times n$ zero-one matrix 
can be represented as an ordered bipartite graph with partite sets of size $n$. Similarly, a $d$-dimensional $0$-$1$ matrix corresponds  
to  an ordered $d$-partite, $d$-uniform hypergraph. The extremal function $f(n, P, d)$ is the maximum 
number of hyperedges in an ordered $d$-partite, $d$-uniform hypergraph where each partite set has $n$ vertices. Avoiding a given pattern $P$ amounts to avoiding the
corresponding ordered hypergraph.

It is easy to obtain trivial lower and upper bounds on $f(n,P,d)$.
\begin{pro}
\label{Easy}
If $P$ is a $0$-$1$ matrix that contains at least two ones, then
$n^{d-1} \leq f(n,P,d) \leq n^{d}$.
\end{pro}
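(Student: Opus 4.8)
The plan is to treat the two inequalities separately; both are elementary, and neither requires any computation. For the upper bound, I would simply note that a $d$-dimensional $n \times \cdots \times n$ matrix has exactly $n^{d}$ entries, so at most $n^{d}$ of them can equal $1$; hence $f(n,P,d) \le n^{d}$ for every $P$.

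For the lower bound I would exhibit, for the given $P$, one explicit $n \times \cdots \times n$ matrix $A$ with exactly $n^{d-1}$ ones that avoids $P$. Since $P$ has at least two ones, it has two distinct nonzero entries, and two distinct entries must differ in some coordinate; fix a coordinate $\ell$ in which they differ, so that $P$ has nonzero entries in two different $\ell$-cross sections. Define $A$ by $a_{i_1,\dots,i_d} = 1$ precisely when $i_\ell = 1$, so that every one of $A$ lies in the single $\ell$-cross section $\{i_\ell = 1\}$ and $A$ has exactly $n^{d-1}$ ones. To see that $A$ avoids $P$, suppose some submatrix of $A$ could be turned into $P$ by deleting ones. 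Looking only at the $\ell$-direction, such a submatrix corresponds to a choice of increasing indices $t_1 < t_2 < \cdots$ in the $\ell$-th coordinate, and the two nonzero entries of $P$ lying in different $\ell$-cross sections would be sent to entries of $A$ whose $\ell^{\text{th}}$ coordinates are two \emph{distinct} chosen indices $t_a \ne t_b$; for both of these entries to be ones of $A$ we would need $t_a = 1$ and $t_b = 1$, which is impossible. Hence $A$ contains no copy of $P$, and $f(n,P,d) \ge n^{d-1}$.

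I do not expect any substantial obstacle here — this is why the proposition is labelled easy. The only point that calls for a little attention is that, in the lower-bound construction, one must choose the cross section orthogonal to a coordinate $\ell$ along which $P$ actually occupies two levels; using an arbitrary direction would not work, since $P$ could have all of its ones confined to a single cross section in some other direction, in which case the corresponding $A$ would in fact contain $P$.
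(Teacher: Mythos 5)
Your proof is correct and follows essentially the same approach as the paper: both bound $f(n,P,d)$ above by the total entry count $n^d$ and below by a matrix supported on a single $\ell$-cross section. The paper simply asserts that one can choose $\ell$ so that this matrix avoids $P$, whereas you spell out the key observation that $\ell$ must be a coordinate in which two ones of $P$ differ — a useful clarification, though not a different method.
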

\begin{proof}
We can always choose a $d$-dimensional  $n \times \cdots \times n$ zero-one matrix $A$, with $1$-entries on a single $\ell$-cross section for some $\ell$ 
and $0$-entries elsewhere, such that $A$ avoids $P$. Matrix $A$ has exactly $n^{d -1}$ ones, so the left inequality follows.

The right inequality follows from the fact that every $d$-dimensional  $n \times  \cdots \times n$ zero-one matrix has a total of $n^d$ entries and that the matrix hence has at most $n^d$ ones.
\end{proof}

The upper bound in Proposition \ref{Easy} is one order higher than the lower bound. The main problem is to improve the lower and upper bounds on $f(n,P,d)$ so that their orders are as close as possible.

The two-dimensional case of $d=2$ has been well studied.  F\"{u}redi and Hajnal conjectured that $f(n,P,2)=O(n)$ for all permutation matrices $P$ \cite{FH}.  Klazar showed that this conjecture implies the Stanley-Wilf conjecture \cite{Kl}. Marcus and Tardos proved the  F\"{u}redi and Hajnal conjecture \cite{MT} and hence settled the Stanley-Wilf conjecture.  Keszegh conjectured that $f(n, P, 2)=O(n)$ for all tuple permutation matrices $P$ \cite{K}. Geneson proved that the conjecture is true \cite{G}.

When $R$ is a $k_1 \times k_2$ matrix of all ones, the extremal problem for $f(n, R, 2)$ is the matrix version of the classical 
Zarankiewicz problem.  K\H{o}v\'{a}ri, S\'{o}s, and Tur\'{a}n found an upper bound $O(n^{2 - {\max({k_1, k_2}) \over k_1 k_2}})$ on $f(n,R,2)$ \cite{KST}.
A lower bound $\Omega(n^{2 - {k_1 + k_2 - 2 \over k_1 k_2 -1}})$ was also known \cite{ES}.  Hesterberg extended these bounds to $f(n, P \otimes R, 2)$ where $P$ is a permutation matrix \cite{HP, H}.

Pach and Tardos showed that $f(n, P, 2)$ is super-additive in $n$ \cite{PT}.
By Fekete's Lemma on super-additive sequences \cite{Fekete}, the sequence $\{ {f(n, P, 2) \over n} \}$ is convergent. The limit is known as the F\"uredi-Hajnal limit and has recently attracted great attention \cite{C, Fox, Fox2}. Cibulka \cite{C} showed that this limit is always at least $2(k-1)$ when $P$ is a $k \times k$ permutation matrix and that the limit is exactly $2(k-1)$ when $P$ is the identity matrix. 
Fox showed that the F\"uredi-Hajnal limit has a lower bound $2^{\Omega((k^{1 \over 2})}$ for a family of $k \times k$ permutation matrices \cite{Fox2}.
Marcus and Tardos \cite{MT} showed that this limit has an upper bound  $2^{O(k \log k)}$ for every $k \times k$ permutation matrix $P$, and Fox \cite{Fox} improved this upper bound to $2^{O(k)}$. 

Little has been done on the multidimensional case. Klazar and Marcus \cite{KM} studied the extremal function when the $d$-dimensional matrix $P$ is a permutation matrix of size 
$k \times \cdots \times k$ and found $f(n,P,d)=O(n^{d-1})$, generalizing the $d=2$ results \cite{ MT}. In particular, they showed that ${f(n, P, d) \over n^{d-1}} = 2^{O(k \log k)}$, which is the  
multidimensional generalization of the Marcus and Tardos upper bound on the F\"uredi-Hajnal limit \cite{ MT}. 

In this paper, we advance the extremal theory of matrices in two directions. In the first direction,
we study the extremal functions $f(n, R, d)$ and $f(n, P \otimes R, d)$, where $P$ is a permutation matrix and $R$ is a
$k_1 \times  \cdots \times k_d$ matrix of ones only; matrix $P \otimes R$ is called a block permutation matrix.
We show that both $f(n, R, d)$ and $f(n, P \otimes R, d)$ have a lower bound
$\Omega(n^{d - \beta})$ and 
an upper bound $O(n^{d - \alpha})$, where  $\alpha = {\max({k_1, \ldots , k_d}) \over  k_1 \cdot k_2 \cdots k_d }$ and $\beta = {k_1+ \cdots +k_d - d \over k_1 \cdot k_2  \cdots k_d-1}$. These 
bounds significantly improve the trivial ones given in Proposition \ref{Easy}. 

Both the lower bound $\Omega(n^{d - \beta})$ and upper bound $O(n^{d - \alpha})$  are in the same order as 
$\Theta(n^{d - 1})$ whenever $\alpha = \beta$. This is exactly when only one of $k_1, \ldots , k_d$ of 
$R$ is larger than $1$; the corresponding $P \otimes R$ is a tuple permutation matrix.

When $\alpha \neq \beta$, our results on $f(n,R,d)$ generalize the K\H{o}v\'{a}ri-S\'{o}s-Tur\'{a}n upper bound \cite{KST} from two dimensions to higher dimensions. Our results on $f(n, P \otimes R, d)$ also extend Hesterberg's results \cite{HP} from $d=2$ to $d > 2$. 

When $\alpha = \beta$, our result that $f(n, P, d) = \Theta(n^{d - 1})$ for every $d$-dimensional tuple permutation matrix $P$, on one hand, generalizes Geneson's result \cite{G}
from $d=2$ to $d \geq 2$. On the other hand, even when $d=2$ our ideas improve some key calculations in Geneson's paper \cite{G}.
These improvements are vital in our derivation of a new upper bound on the limit superior of the sequence $\{ {f(n,P,d) \over n^{d-1}}\}$ that we discuss below. 

The importance of our result $f(n, P, d) = \Theta(n^{d - 1})$ for every $d$-dimensional tuple permutation matrix $P$ lies in the fact that, in view of Proposition \ref{Easy}, $\Theta(n^{d - 1})$ is the lowest possible order for the extremal function of any nontrivial $d$-dimensional matrix. 

In the second direction, we study the limit inferior and limit superior of the sequence $\{ {f(n,P,d) \over n^{d-1}} \}$ where $P$ satisfies $f(n, P, d) = \Theta(n^{d - 1})$. These are the multidimensional analogues of the F\"uredi-Hajnal limit. We show that the limit inferior is at least $d(k-1)$ for $k \times \cdots \times k$ permutation matrices, 
generalizing Cibulka's result \cite{C} from $d=2$ to all $d \ge 2$. 

We observe that $f(n, P, d)$ is super-homogeneous in higher dimensions, i.e., $f(sn, P, d) \geq K s^{d-1} f(n, P, d)$ for some positive constant $K$.
This super-homogeneity is key to our proof that 
the limit inferior of $\{ {f(n,P,d) \over n^{d-1}} \}$ has a lower bound $2^{\Omega(k^{1 / d})}$ for a family of $k \times \cdots \times k$ permutation matrices, generalizing Fox's result \cite{Fox2} from $d=2$ to $d \geq 2$.

Finally, we show that the limit superior of  the sequence $\{ {f(n,P,d) \over n^{d-1}} \}$ is bounded above by $ 2^{O(k)}$ for all $k \times \cdots \times k$ permutation matrices $P$.
This is a substantial improvement of Klazar and Marcus upper bound $2^{O(k \log k)}$ for $d > 2$ in 
paper \cite{KM} and it also generalizes Fox's bound $2^{O(k)}$ on the F\"uredi-Hajnal limit in two dimensions \cite{Fox}. We further show that this upper bound $2^{O(k)}$ is also true for every tuple permutation matrix $P$, which is a new result even for $d=2$. We are able to extend the new upper bound from permutation matrices to tuple permutation matrices mainly because of our improvement of Geneson's approach as mentioned above.

The rest of the paper is organized as follows. In Section 2, we study $f(n,P,d)$ when $P$ is a block permutation matrix but not a tuple permutation matrix. The more difficult case when $P$ is a tuple permutation matrix is analyzed in Section \ref{tuple}. 
In Section 4, we study the limit inferior and limit superior of the sequence $\{ {f(n,P,d) \over n^{d-1}}\}$ for permutation and tuple permutation matrices $P$. We conclude the paper and discuss our future directions in Section \ref{conclusion}.

\section{Block permutation matrices}
\label{Block}
\bigskip

In this section, we study the extremal function of a variant of $d$-dimensional permutation matrices. 
We are interested in the forbidden matrices which can be written as the Kronecker product of a $d$-dimensional permutation 
matrix and a $d$-dimensional matrix of $1$-entries only.

Let $R^{k_1,...,k_d}$ be the $d$-dimensional $k_1 \times \cdots \times k_d$ matrix of all ones. We 
study lower and upper bounds on the extremal function of block permutation matrix $P \otimes R^{k_1,...,k_d}$, 
where $P$ is a $d$-dimensional permutation matrix.

We first study the extremal function of  $R^{k_1, \ldots  , k_d}$. We use the probabilistic method to  obtain a lower 
bound on $f(n,R^{k_1,...,k_d},d)$. When $d=2$, this lower bound is classical \cite{ES}.
\begin{theo}
\label{lowerbound}
If $k_1 \cdot k_2 \cdots k_d > 1$, then
$f(n,R^{k_1, \ldots , k_d},d) = \Omega \left( n^{d- \beta(k_1, k_2, \ldots , k_d)} \right)$, where $\beta = {k_1+ \cdots +k_d - d \over k_1 \cdot k_2 \cdots k_d-1}$.
\end{theo}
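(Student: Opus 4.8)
The plan is to use the probabilistic deletion method, generalizing to $d$ dimensions the classical two-dimensional Erd\H{o}s--Spencer lower bound for the Zarankiewicz problem. Since $k_1 \cdots k_d > 1$ forces some $k_\ell \ge 2$, we have $k_1 + \cdots + k_d > d$, so $\beta = \beta(k_1,\ldots,k_d) > 0$ and $k_1 \cdots k_d - 1 \ge 1$. Fix a constant $c \in (0,1)$ (for instance $c = \tfrac12$) and put $p = c\,n^{-\beta}$, which is at most $1$ once $n$ is large. Let $A$ be the random $d$-dimensional $n \times \cdots \times n$ zero-one matrix in which each of the $n^d$ entries is independently set to $1$ with probability $p$.

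First I would record that the expected number of ones of $A$ is exactly $p\,n^d$. Call a \emph{bad configuration} a choice of $k_\ell$ indices in coordinate $\ell$, for every $\ell = 1,\ldots,d$, such that the resulting $k_1 \times \cdots \times k_d$ sub-box of $A$ consists entirely of ones; since $R^{k_1,\ldots,k_d}$ has no zero entries, $A$ contains $R^{k_1,\ldots,k_d}$ exactly when a bad configuration exists. There are $\prod_{\ell=1}^{d}\binom{n}{k_\ell} \le n^{k_1 + \cdots + k_d}$ sub-boxes of this shape, and each is all-ones with probability $p^{k_1 \cdots k_d}$, so the expected number of bad configurations is at most $n^{k_1 + \cdots + k_d}\,p^{k_1 \cdots k_d}$. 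Now delete one $1$-entry from each bad configuration of $A$; the resulting matrix $A'$ contains no all-ones $k_1 \times \cdots \times k_d$ sub-box, hence avoids $R^{k_1,\ldots,k_d}$, and deleting ones creates no new bad configuration. Since at most one entry is removed per bad configuration,
\[
\mathbb{E}[\,\#\text{ones of }A'\,] \;\ge\; p\,n^d - n^{k_1 + \cdots + k_d}\,p^{k_1 \cdots k_d}.
\]

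The crucial point is that $\beta$ is defined precisely so that the two exponents coincide: $\beta(k_1 \cdots k_d - 1) = k_1 + \cdots + k_d - d$ gives $(k_1 + \cdots + k_d) - \beta(k_1 \cdots k_d) = d - \beta$. Substituting $p = c\,n^{-\beta}$ therefore yields $\mathbb{E}[\#\text{ones of }A'] \ge (c - c^{k_1 \cdots k_d})\,n^{d - \beta}$, and because $0 < c < 1$ and $k_1 \cdots k_d \ge 2$ we have $c - c^{k_1 \cdots k_d} \ge c(1-c) > 0$. Hence some $n \times \cdots \times n$ matrix avoids $R^{k_1,\ldots,k_d}$ while having at least $(c - c^{k_1 \cdots k_d})\,n^{d - \beta}$ ones, which is the desired bound. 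I do not anticipate a real obstacle: this is a routine instance of the deletion method, and the only steps demanding attention are the exponent bookkeeping that shows the prescribed $\beta$ exactly balances the count of ones against the count of forbidden sub-boxes, and the verification that $p \le 1$ for large $n$ and that the leading constant is strictly positive.
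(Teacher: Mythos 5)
Your proof is correct and is essentially the same probabilistic deletion argument the paper uses; the only cosmetic difference is that you take $p = c\,n^{-\beta}$ with $c<1$ and bound $\prod_\ell\binom{n}{k_\ell}$ crudely by $n^{k_1+\cdots+k_d}$, whereas the paper takes $p=n^{-\beta}$ and extracts the positive leading constant from the observation that $\prod_\ell\binom{n}{k_\ell}\le C\,n^{k_1+\cdots+k_d}$ with $C<1$ because some $k_\ell\ge 2$. Both variants yield a strictly positive constant in front of $n^{d-\beta}$, and your exponent bookkeeping via the defining relation $\beta(k_1\cdots k_d-1)=k_1+\cdots+k_d-d$ matches the paper's.
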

\begin{proof} Let each entry of a $d$-dimensional $n \times \cdots \times n$ zero-one matrix 
$A$ be chosen to be $1$ with probability $p=n^{- \beta(k_1, \ldots , k_d)}$ and $0$ with probability $1 - p$. 
The expected number of $1$-entries in $A$ is $pn^d$. There are ${n \choose k_1} \cdot {n \choose k_2} \cdots {n \choose k_d}$ 
possible copies of $R^{k_1, \ldots , k_d}$ in matrix $A$ and each has a probability of $p^{k_1 \cdot k_2 \cdots k_d}$ of occurring.
The expected number of copies of $R^{k_1, \ldots , k_d}$ in $A$ is
$$ {n \choose k_1} \cdot {n \choose k_2} \cdots {n \choose k_d}p^{k_1 \cdot k_2 \cdots k_d} \le Cn^{k_1+ \cdots + k_d}p^{k_1 \cdot k_2 \cdots k_d} \ ,$$
where, since at least one of $k_1$, $\ldots$ , $k_d$ is greater than one,  $C$ is a positive constant less than 1. 

Let $A'$ be the matrix formed by changing a single $1$-entry in each copy of $R^{k_1, \ldots , k_d}$ in 
$A$ to a 0-entry. Then $A'$ avoids $R^{k_1,...,k_d}$ and the expected number of $1$-entries in $A'$ is at least
$pn^d- Cn^{k_1+k_2+ \cdots + k_d}p^{k_1 \cdot k_2 \cdots k_d}=(1 - C) \ n^{d-\beta(k_1, k_2, \ldots, k_d) }$.
 As a consequence, there exists some matrix $A'$ that avoids $R^{k_1, \ldots , k_d}$ 
and has at least so many $1$-entries.
\end{proof}

We now obtain an upper bound on the extremal function of $R^{k_1, \ldots , k_d}$. When $d=2$,  this upper bound is due to K\H{o}v\'{a}ri, S\'{o}s, and Tur\'{a}n \cite{KST}.
\begin{theo}
\label{upperbound}
$f(n,R^{k_1, \ldots , k_d},d)=O(n^{d- \alpha(k_1, \ldots , k_d)})$, where $\alpha = {\max({k_1, \ldots , k_d}) \over  k_1 \cdot k_2 \cdots k_d }$.
\end{theo}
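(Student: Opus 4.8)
The plan is to run a counting (double-counting) argument in the style of the classical Kővári–Sós–Turán bound, generalized to $d$ dimensions. Without loss of generality assume $k_d = \max(k_1,\ldots,k_d)$, so that $\alpha = k_d/(k_1\cdots k_d)$. Let $A$ be a $d$-dimensional $n\times\cdots\times n$ zero-one matrix avoiding $R^{k_1,\ldots,k_d}$, and let $N$ be its number of ones; the goal is $N = O(n^{d-\alpha})$. The idea is to view $A$ along the last coordinate: each $(i_1,\ldots,i_{d-1})$-row in direction $d$ is a $0$-$1$ string of length $n$, and avoiding $R^{k_1,\ldots,k_d}$ forbids finding, in $k_1\times\cdots\times k_{d-1}$ many such rows whose first $d-1$ coordinate-sets are ``aligned'' as a combinatorial box, a common set of $k_d$ columns all carrying ones.

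First I would set up the count of ``$k_d$-cliques in the last direction.'' For each choice of $k_d$ values $j_1 < \cdots < j_{k_d}$ in the last coordinate, consider the $(d-1)$-dimensional slice $B \subseteq \{0,1\}^{n\times\cdots\times n}$ of those $(i_1,\ldots,i_{d-1})$ for which $a_{i_1,\ldots,i_{d-1},j_t} = 1$ for all $t$. The avoidance hypothesis says precisely that $B$, regarded as a $(d-1)$-dimensional $0$-$1$ matrix, must avoid $R^{k_1,\ldots,k_{d-1}}$ — otherwise we would recover a copy of $R^{k_1,\ldots,k_d}$. Hence, by induction on $d$ (the base case $d=1$ being trivial, since a $0$-$1$ string of length $n$ avoiding $R^{k_1}$ has fewer than $k_1$ ones, and indeed one may instead bootstrap from the known two-dimensional result \cite{KST}), the size of $B$ is $O(n^{(d-1) - \alpha'})$ for the appropriate exponent, or — more cleanly — at most a constant times $n^{d-1}$ with a sharper bookkeeping I describe next. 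Summing over all $\binom{n}{k_d}$ choices of columns gives an upper bound on the total number $T$ of ``aligned $k_d$-tuples,'' namely $T = O(n^{k_d} \cdot n^{d-1-?})$.

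The cleaner route, and the one I would actually write, is the standard convexity step done directly. Count pairs $\big((i_1,\ldots,i_{d-1}),\, S\big)$ where $S$ is a $k_d$-subset of the last coordinate with $a_{i_1,\ldots,i_{d-1},j}=1$ for every $j\in S$. For fixed $(i_1,\ldots,i_{d-1})$, if that row has $r$ ones then it contributes $\binom{r}{k_d}$ such pairs; by convexity of $x\mapsto\binom{x}{k_d}$ and Jensen, summing over the $n^{d-1}$ choices of $(i_1,\ldots,i_{d-1})$ gives at least $n^{d-1}\binom{N/n^{d-1}}{k_d}$ pairs. On the other hand, fix the $k_d$-set $S$; the $(i_1,\ldots,i_{d-1})$ that are aligned on $S$ form a $(d-1)$-dimensional $0$-$1$ matrix avoiding $R^{k_1,\ldots,k_{d-1}}$, and a set of lattice points in $[n]^{d-1}$ that avoids $R^{k_1,\ldots,k_{d-1}}$ can contain at most $(k_1-1)\binom{n}{k_2}\cdots\binom{n}{k_{d-1}} \cdot \text{(something)} < c\, n^{d-1}$ of a very restricted shape — but what we really need is a bound on the number of such $(i_1,\ldots,i_{d-1})$ that simultaneously carry a $k_1\times\cdots\times k_{d-1}$ box; iterating the same Kővári–Sós–Turán counting in dimension $d-1$ shows this count is at most $C n^{d-1}$ for a constant $C = C(k_1,\ldots,k_{d-1})$, and summing over the $\binom{n}{k_d} \le n^{k_d}$ choices of $S$ bounds the pair count above by $C\, n^{d-1+k_d}$. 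Combining the two estimates, $n^{d-1}\binom{N/n^{d-1}}{k_d} \le C\, n^{d-1+k_d}$, so $\big(N/n^{d-1}\big)^{k_d} = O(n^{k_d})$ up to lower-order terms, i.e. $N/n^{d-1} = O(n^{k_d/k_d}\cdot n^{-\text{correction}})$; carrying the constants and the $\binom{n}{k_i}$ factors carefully yields $N = O\!\big(n^{d - k_d/(k_1\cdots k_d)}\big) = O(n^{d-\alpha})$.

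I expect the main obstacle to be the bookkeeping in the recursive/iterated application — making sure the right powers of $n$ and the right binomial ``defect'' terms line up so that the exponent that finally appears is exactly $\max(k_i)/\prod k_i$ rather than something weaker. Concretely, one must be careful that when one reduces from dimension $d$ to dimension $d-1$ one does not lose by fixing the ``wrong'' coordinate; choosing $k_d$ to be the maximum and peeling along that coordinate is what makes the final exponent come out as $\alpha$. A secondary technicality is handling small/degenerate rows (those with fewer than $k_d$ ones, which contribute nothing to the pair count) and absorbing them into the $O(n^{d-1})$ error term, which is harmless since $d-1 \le d-\alpha$. Modulo this routine-but-delicate accounting, the argument is just Kővári–Sós–Turán run once per coordinate.
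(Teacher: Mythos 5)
Your overall plan — double counting aligned $k_d$-tuples along one coordinate, applying Jensen to $\binom{r}{k_d}$, and bounding the number of ``aligned'' $(d-1)$-tuples by an inductive bound for the $(d-1)$-dimensional problem — is exactly the paper's strategy. But there are two genuine gaps, and the second one would actually cause the final exponent to come out wrong.

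First, the bound $Cn^{d-1}$ you use for the number of $(i_1,\ldots,i_{d-1})$ aligned on a fixed $k_d$-set $S$ is not strong enough. Plugging it in gives $n^{d-1}\bigl(N/n^{d-1}\bigr)^{k_d} = O(n^{d-1+k_d})$, hence $N = O(n^d)$ — the trivial bound. The saving has to come precisely from the sub-$n^{d-1}$ growth of the aligned set: the inductive hypothesis gives that the aligned set, viewed as a $(d-1)$-dimensional matrix avoiding $R^{k_1,\ldots,k_{d-1}}$, has size $O\bigl(n^{(d-1)-\alpha(k_1,\ldots,k_{d-1})}\bigr)$ where $\alpha(k_1,\ldots,k_{d-1}) = \max(k_1,\ldots,k_{d-1})/(k_1\cdots k_{d-1}) > 0$. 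You gesture at this in the first paragraph and then abandon it for the looser $Cn^{d-1}$; the ``carrying the constants carefully'' at the end cannot recover what was dropped, since the loss is a power of $n$, not a constant.

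Second, and more fundamentally, your WLOG choice to peel along the \emph{maximum} coordinate $k_d$ is backwards. Carrying the inductive bound through correctly gives $N = O\bigl(n^{\,d - \alpha(k_1,\ldots,k_{d-1})/k_d}\bigr)$, and $\alpha(k_1,\ldots,k_{d-1})/k_d = \max(k_1,\ldots,k_{d-1})/(k_1\cdots k_d)$, which is \emph{strictly less} than $\alpha = k_d/(k_1\cdots k_d)$ whenever $k_d$ is the unique maximum; so you would land on a strictly weaker exponent. (Check $d=2$, $k_1=2$, $k_2=3$: peeling $k_2$ yields $O(n^{5/3})$, while the true bound is $O(n^{3/2})$, achieved by peeling $k_1$.) To get $\alpha$ in the $(d-1)$-dimensional problem, you must peel a coordinate that does \emph{not} hold the unique maximum, so that the maximum survives into the remaining tuple. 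The paper sidesteps the choice by running the argument twice — peeling the last coordinate to get $O\bigl(n^{d-\alpha(k_1,\ldots,k_{d-1})/k_d}\bigr)$ and the first to get $O\bigl(n^{d-\alpha(k_2,\ldots,k_d)/k_1}\bigr)$ — and observing that $\max\bigl(\alpha(k_2,\ldots,k_d)/k_1,\,\alpha(k_1,\ldots,k_{d-1})/k_d\bigr) = \alpha(k_1,\ldots,k_d)$, since coordinates $1$ and $d$ cannot both be the unique maximizer. Your proof needs this (or an explicit ``peel a non-maximal coordinate'' step) to close.
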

\begin{proof}
We prove the theorem by induction on $d$. The base case of $d=1$ is trivial. Assuming that 
$f(n, R^{k_1, \ldots, k_{d-1}},d-1)=O(n^{d-1- \alpha(k_1, \ldots, k_{d-1})})$ 
for some $d \geq 2$, we show that $f(n,R^{k_1, \ldots , k_d},d)=O(n^{d- \alpha(k_1, \ldots , k_d)})$.

Throughout the proof, we let  $A = (a_{i_1, \ldots , i_{d}})$ be a $d$-dimensional $n \times \cdots \times n$ matrix  
that avoids $R^{k_1, \ldots , k_d}$ and has the maximum number, $f(n,R^{k_1, \ldots , k_d},d)$, of  ones. We need the following  lemma on the number 
of $d$-rows that have $1$-entries in each of predetermined $k_d$ $d$-cross sections.

\begin{lem}
\label{d-row}
For any set of $k_d$ $d$-cross sections of $A$, there are $O\left(n^{d-1-\alpha(k_1, \ldots , k_{d-1})} \right)$ $d$-rows in $A$ which contain a 1-entry in each of these $d$-cross sections.

\end{lem}
\begin{proof}
Let the $d^{\text{th}}$ coordinates of these $d$-cross sections be $\ell_1, \ldots , \ell_{k_d}$. Define a ($d-1$)-dimensional
$n\times \cdots \times n$ matrix $B=(b_{i_1, \ldots , i_{d-1}})$ such that
$b_{i_1, \ldots , i_{d-1}}=1$ if $a_{ i_1, \ldots , i_{d-1},\ell_1}= \cdots =a_{i_1, \ldots , i_{d-1}, \ell_{k_d}}=1$ 
and $b_{i_1, \ldots , i_{d-1}}=0$ otherwise.

We claim that matrix $B$ must avoid $R^{k_1, \ldots , k_{d-1}}$. Suppose to the contrary that 
$B$ contains $R^{k_1, \ldots , k_{d-1}}$. Let $e_1, \ldots , e_{k_1 \cdot k_2 \cdots k_{d-1}}$ be all the $1$-entries 
in $B$ that represent $R^{k_1,...,k_{d-1}}$. By the construction of $B$,
there are $k_d$ nonzero entries  with coordinates $(x_1, \ldots , x_{d-1},\ell_1), \ldots , (x_1, \ldots , x_{d-1}, \ell_{k_d})$ in $A$ corresponding to
each $e_i$ with coordinates $(x_1, \ldots , x_{d-1})$ in $B$. All these $k_1 \cdot k_2 \cdots k_d$ nonzero entries 
form a copy of $ R^{k_1, \ldots , k_{d}}$ in $A$, a contradiction. Thus $B$ must avoid 
$R^{k_1, \ldots , k_{d-1}}$ and  by our inductive assumption, $B$ must have 
$O(n^{d-1-\alpha(k_1, \ldots , k_{d-1})})$ ones. The result follows.
\end{proof}

Suppose all the $d$-rows of $A$ have $r_1, \ldots , r_{n^{d-1}}$  non-zero entries, respectively. 
Counting the total number of sets of $k_d$ nonzero entries in the same $d$-row in two different ways yields
\begin{equation}
\label{two}
\sum_{i=1}^{n^{d-1}}{r_i \choose k_d} = {n \choose k_d}O\left( n^{d-1-\alpha(k_1, \ldots , k_{d-1})} \right) \ ,
\end{equation}
where we use Lemma \ref{d-row} to obtain the right hand side.

Matrix $A$ avoids $R^{k_1, \ldots, k_d}$ and has the largest possible number of $1$-entries, so $r_i \geq k_d - 1$ for $1 \leq i \leq  n^{d-1}$. 
Since ${r \choose k}$ is a convex function of $r$ for $r \geq k - 1$,  we apply Jensen's inequality to obtain
\begin{eqnarray*}
\sum_{i=1}^{n^{d-1}}{r_i \choose k_d} \ge n^{d-1} {{1\over n^{d-1}}\sum_{i=1}^{n^{d-1}} r_i \choose k_d} 
= n^{d-1}{{1\over n^{d-1}}f(n,R^{k_1, \ldots , k_d},d) \choose k_d} \ ,
\end{eqnarray*}
where, in the equality, we use the assumption that $A$ has $f(n,R^{k_1, \ldots , k_d}, d)$ total $1$-entries.
Substituting this into equation (\ref{two}) yields
$$n^{d-1}{{1\over n^{d-1}}f(n,R^{k_1, \ldots , k_d},d) \choose k_d} = {n \choose k_d}O\left(n^{d-1-\alpha(k_1,\ldots , k_{d-1})} \right) \ ,$$
\noindent
which together with ${n \choose k}=\Theta(n^k)$ gives
$$n^{d-1} \left({1\over n^{d-1}}f(n,R^{k_1, \ldots , k_d},d)\right)^{k_d} = O\left( n^{ k_d}\cdot n^{d-1-\alpha(k_1,\ldots , k_{d-1})} \right) \ . $$ 
This implies
$$f\left( n,R^{k_1, \ldots , k_d},d \right) = O \left( n^{d-{\alpha(k_1, \ldots , k_{d-1}) \over k_d }} \right) \ . $$
Similarly, we have
$$f(n,R^{k_1, \ldots , k_d},d) = O\left(n^{d-{\alpha(k_2, \ldots , k_d) \over k_1}}\right) \ .$$ 
Note that $\max\left({\alpha(k_2, \ldots , k_d) \over k_1}, {\alpha(k_1, \ldots , k_{d-1}) \over k_d}\right)=\alpha(k_1, \ldots , k_d)$. 
Thus taking the smaller of the two upper bounds gives
$$f(n,R^{k_1, \ldots , k_d},d) = O\left(n^{d-\alpha(k_1, \ldots , k_d)}\right) $$ 
which completes the inductive step, and thus Theorem \ref{upperbound} is proved.
\end{proof}

We make the following observation on $\alpha(k_1, \ldots , k_d)$ and $\beta(k_1, \ldots , k_d)$.
\begin{pro}
\label{alpha}
 Suppose $d > 1$ and $k_1, \ldots , k_d$ be positive integers such that $k_1 \cdot k_2 \cdots k_d > 1$. If only one  of $k_1, \ldots , k_d$ is greater than 1, then
$\alpha(k_1, \ldots , k_d) = \beta(k_1, \ldots , k_d) = 1 $.
Otherwise,
$0 < \alpha(k_1, \ldots , k_d) <  \beta(k_1, \ldots , k_d) < 1 $.
\end{pro}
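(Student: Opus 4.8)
The plan is to reduce every inequality in the statement to a comparison between the product $k_1 k_2 \cdots k_d$ and the sum $k_1 + \cdots + k_d$, splitting into the two cases of the proposition. Throughout, write $k_m = \max(k_1, \ldots , k_d)$, so that $\alpha = k_m /(k_1 \cdots k_d)$ and $\beta = (k_1 + \cdots + k_d - d)/(k_1 \cdots k_d - 1)$, the denominator of $\beta$ being positive since $k_1 \cdots k_d > 1$.

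In the degenerate case where exactly one of the $k_i$ exceeds $1$, say $k_m > 1$ and all other $k_i = 1$, I would simply substitute into the definitions: then $k_1 \cdots k_d = k_m$, so $\alpha = 1$, while $k_1 + \cdots + k_d - d = k_m + (d-1) - d = k_m - 1 = k_1 \cdots k_d - 1$, so $\beta = 1$; this settles the first assertion. For the main case, assume at least two of the $k_i$ are greater than $1$; then $k_m \geq 2$ and there is an index $j \neq m$ with $k_j \geq 2$. The bound $\alpha > 0$ is immediate. For $\beta < 1$, the crucial input is the elementary fact that $\prod_{i=1}^d k_i = \prod_{i=1}^d \bigl(1 + (k_i - 1)\bigr) \geq 1 + \sum_{i=1}^d (k_i - 1)$ for positive integers, with equality exactly when at most one factor $k_i - 1$ is nonzero; since two of them are, the inequality is strict, i.e.\ $k_1 \cdots k_d - 1 > k_1 + \cdots + k_d - d$, which is precisely $\beta < 1$.

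It then remains to prove $\alpha < \beta$ in the main case, and $\alpha < 1$ follows from $\alpha < \beta < 1$ (or directly, since $\alpha < 1 \iff \prod_{i \neq m} k_i > 1$, which holds because $k_j \geq 2$). Clearing the two positive denominators, $\alpha < \beta$ is equivalent to $k_m(k_1 \cdots k_d - 1) < (k_1 + \cdots + k_d - d)(k_1 \cdots k_d)$, that is, to $(k_1 \cdots k_d)\bigl(\sum_{i \neq m} k_i - d\bigr) + k_m > 0$. Here $\sum_{i \neq m} k_i$ is a sum of $d - 1$ positive integers, one of which is $k_j \geq 2$ while the remaining $d - 2$ are each $\geq 1$, so $\sum_{i \neq m} k_i \geq d$; hence the first term is nonnegative and adding $k_m \geq 1$ gives strict positivity. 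I do not expect a genuine obstacle here: the only point needing care is the bookkeeping in the main case — verifying that ``at least two $k_i > 1$'' really forces a large index $j$ distinct from the argmax $m$, even when the maximum is attained with multiplicity — after which everything reduces to the super-additivity of the product and elementary arithmetic.
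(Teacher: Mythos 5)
Your proof is correct. The paper omits its own proof of this proposition, remarking only that it is ``straightforward,'' and your direct algebraic verification is exactly the kind of elementary argument the authors intend: the degenerate case reduces to substitution, $\beta < 1$ follows from strict super-additivity of the product $\prod(1 + (k_i - 1)) > 1 + \sum(k_i - 1)$ when two factors exceed $1$, and $\alpha < \beta$ reduces after clearing the (positive) denominators to $(k_1 \cdots k_d)\bigl(\sum_{i \neq m} k_i - d\bigr) + k_m > 0$, which holds since the $d-1$ terms in $\sum_{i \neq m} k_i$ include one that is at least $2$. All steps check out, including the observation that ``at least two $k_i > 1$'' guarantees an index $j \neq m$ with $k_j \geq 2$ regardless of whether the maximum is attained with multiplicity.
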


We omit the proof since it is straightforward. Proposition \ref{alpha} implies that the lower bound of Theorem \ref{lowerbound} and the upper bound of Theorem \ref{upperbound} are significant improvements of the bounds in Proposition \ref{Easy}.

We now study the extremal function of the Kronecker product $P \otimes R^{k_1, \ldots , k_d}$, where $P$ is a $d$-dimensional permutation matrix. We show that the extremal functions of $P \otimes R^{k_1, \ldots , k_d}$ and $R^{k_1, \ldots , k_d}$ share the same lower and upper bounds.

\noindent
\begin{theo}
\label{super}
If $P$ is a $d$-dimensional permutation matrix and at least two of $k_1, \ldots , k_d$ are greater than 1, then there exist constants $C_1$ and $C_2$ such that for all $n$,
\begin{equation}
\label{block}
C_1 n^{d - \beta(k_1, \ldots , k_d)} \leq f(n,P\otimes R^{k_1, \ldots , k_d}, d) \leq C_2 n^{d - \alpha(k_1, \ldots , k_d)}
\end{equation}
\end{theo}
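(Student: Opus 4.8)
The lower bound is immediate: since $P \otimes R^{k_1,\ldots,k_d}$ contains $R^{k_1,\ldots,k_d}$ as a submatrix, any matrix avoiding $R^{k_1,\ldots,k_d}$ also avoids $P \otimes R^{k_1,\ldots,k_d}$, so $f(n, P\otimes R^{k_1,\ldots,k_d}, d) \geq f(n, R^{k_1,\ldots,k_d}, d) = \Omega(n^{d-\beta})$ by Theorem \ref{lowerbound}. The substance is the upper bound, and the natural approach is a divide-and-conquer argument in the style of Marcus--Tardos, carried out in $d$ dimensions. I would partition the $n \times \cdots \times n$ cube into $(n/s)^d$ blocks, each of size $s \times \cdots \times s$, where $s$ is a constant depending on $P$ and the $k_i$ (roughly $s = k \cdot \max_\ell(\text{side length of } P)$, or just a large enough constant). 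Call a block \emph{heavy} in coordinate direction $\ell$ if its nonzero entries project onto at least $k_\ell$ distinct $\ell$-cross sections within the block, and call it \emph{rich} if it is heavy in every direction $\ell$ for which $k_\ell > 1$ — equivalently, rich blocks are exactly those that, after contraction, could potentially host a copy of $P \otimes R^{k_1,\ldots,k_d}$ built from one-entries spread across $|P|$ of them.

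**Key steps.** First, bound the contribution of non-rich blocks: a block that fails to be heavy in some direction $\ell$ has all its ones confined to at most $k_\ell - 1$ cross sections in that direction, hence at most $(k_\ell - 1)s^{d-1}$ ones; summing over all $(n/s)^d$ blocks gives $O(s^{d-1} \cdot (n/s)^d) = O(n^d / s)$ ones, which after choosing $s$ suitably... wait, this only gives $O(n^d/s)$, not $O(n^{d-\alpha})$ — so this crude bound is not enough on its own and I must be more careful. The correct move is recursive: let $g(n)$ be the extremal function in question; the non-rich blocks are handled by noting that within each block the configuration avoids $R$ in some direction, but more usefully, one contracts the grid of blocks to an $(n/s) \times \cdots \times (n/s)$ matrix $A'$ where an entry is $1$ iff the corresponding block is rich, and shows $A'$ must avoid $P$ (because a copy of $P$ in $A'$, blown back up, yields $P \otimes R^{k_1,\ldots,k_d}$ in $A$ using the richness of each block to extract the needed $k_1 \cdots k_d$ ones in the right cross-section pattern). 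By the Klazar--Marcus result $f(m, P, d) = O(m^{d-1})$, the number of rich blocks is $O((n/s)^{d-1})$, each contributing at most $s^d$ ones, for a total of $O(s \cdot n^{d-1})$.

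**Combining.** Then $g(n) \leq O(s\, n^{d-1}) + (\text{contribution of non-rich blocks})$, and the non-rich blocks, when restricted, avoid $R^{k_1,\ldots,k_d}$... actually the cleanest bookkeeping: a non-rich block fails to be heavy in \emph{some} direction, so it avoids $R^{k_1',\ldots,k_d'}$ where $k_\ell' = k_\ell$ except one coordinate is reduced — but reducing a coordinate of $R$ gives a smaller all-ones matrix, which is \emph{easier} to contain, so this doesn't directly give a bound via $R$. Instead, observe that the total count of ones over non-rich blocks can be bounded by $\sum_{\ell: k_\ell>1}$ (number of blocks not heavy in direction $\ell$)$\cdot (k_\ell-1)s^{d-1}$, and then re-sum along full $\ell$-rows of blocks: along a fixed $\ell$-row of $n/s$ blocks, the ones outside rich blocks... the honest approach here is to iterate the block decomposition / apply the $d$-dimensional Marcus--Tardos machinery directly to $R$-avoidance. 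Given the scope, I would instead prove the upper bound by the slick route: every matrix avoiding $P \otimes R^{k_1,\ldots,k_d}$ is, after the single block-contraction above, decomposed into $O((n/s)^{d-1})$ rich blocks (each contributing $O(s^d)$) plus a collection of $(n/s)^d$ blocks each of which, restricted to its own $s^{d}$ cell, simply satisfies the trivial local bound — but one chooses $s$ as a \emph{function of $n$}, namely $s = n^{\alpha}$ or a constant optimized against the recursion $g(n) \le c\,(n/s)^{d-1} s^d + g'(\text{pieces})$, to land at $O(n^{d-\alpha})$.

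**Main obstacle.** The real difficulty, and the step I expect to be delicate, is showing that the contracted block-grid matrix avoids $P$ (not $P \otimes R$): one must verify that richness of a block in \emph{every} direction with $k_\ell > 1$ genuinely suffices to extract a full copy of $R^{k_1,\ldots,k_d}$ inside it that aligns, across the $|P|$ blocks selected by the pattern $P$, into cross-sections compatible with $P \otimes R^{k_1,\ldots,k_d}$. This requires a pigeonhole/Hall-type selection argument inside each rich block to choose the $k_\ell$ cross-sections in each direction consistently, and care about directions $\ell$ with $k_\ell = 1$ (where no contraction-direction condition is imposed and one just needs a single one-entry). Once that structural lemma is in hand, the arithmetic of balancing the block size $s$ against the two terms to extract the exponent $d - \alpha(k_1,\ldots,k_d)$ is routine, matching the bound already obtained for $R^{k_1,\ldots,k_d}$ in Theorem \ref{upperbound} up to the constant $C_2$.
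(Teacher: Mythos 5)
Your lower bound is exactly what the paper does: $f(n,P\otimes R,d)\geq f(n,R,d)=\Omega(n^{d-\beta})$ by Theorem~\ref{lowerbound}, since any matrix avoiding $R^{k_1,\ldots,k_d}$ avoids $P\otimes R^{k_1,\ldots,k_d}$. That part is fine.

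The upper bound has a genuine gap at the very step you flag as ``delicate'': your contraction criterion is too weak. You mark a block as rich if, for every direction $\ell$ with $k_\ell>1$, it has at least $k_\ell$ nonzero $\ell$-cross sections, and then claim the $0$--$1$ block-grid matrix $A'$ (rich $=1$) must avoid $P$. This fails because richness does not imply the block contains $R^{k_1,\ldots,k_d}$. A concrete counterexample already in dimension $2$ with $k_1=k_2=2$: a $3\times 3$ block whose only ones lie on the diagonal has three nonzero rows and three nonzero columns, hence is rich, yet it avoids $R^{2,2}$ — no two ones share a row. So a copy of $P$ in $A'$ does not blow back up to a copy of $P\otimes R$ in $A$; no pigeonhole or Hall-type argument can rescue this, since the block simply lacks the required configuration of ones. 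The correct contraction criterion, used in the paper, is to set the grid entry to $1$ precisely when the corresponding block \emph{contains} $R^{k_1,\ldots,k_d}$; then the grid matrix genuinely avoids $P$, because each $1$-entry of $P$ can be filled in with an actual copy of $R$ sitting in the corresponding block, and these copies align automatically since the blocks occupy disjoint cross-section ranges.

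The second, related issue is that your counting never closes into a usable recursion, and you end up hedging between a fixed block size $s$ and $s=n^\alpha$. The paper runs the decomposition the other way around (Hesterberg's approach): fix a large constant $c$ and split a $cn\times\cdots\times cn$ matrix into $c^d$ blocks each of size $n\times\cdots\times n$. Blocks containing $R$ number at most $f(c,P,d)=O(c^{d-1})$ by Klazar--Marcus, and each such block, still avoiding $P\otimes R$, has at most $f(n,P\otimes R,d)$ ones — this is the recursive term, not the trivial $s^d$ bound you use. Blocks avoiding $R$ number at most $c^d$ and each has at most $f(n,R,d)=O(n^{d-\alpha})$ ones by Theorem~\ref{upperbound}. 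The recursion $f(cn,P\otimes R,d)\le f(c,P,d)\,f(n,P\otimes R,d)+c^d f(n,R,d)$ then gives $O(n^{d-\alpha})$ by induction on powers of $c$, using Proposition~\ref{alpha} ($\alpha<1$) to choose $c$ large enough that $2Lc^{\alpha-1}\le 1$. That inequality $\alpha<1$ is what makes the geometric series converge — a point your sketch never reaches.
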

\noindent
\begin{proof} We first have 
\begin{equation}
\label{R}
f(n, R^{k_1, \ldots , k_d}, d) \leq  f(n,P\otimes R^{k_1, \ldots , k_d}, d).
\end{equation}
 This follows from the fact that any matrix that avoids $R^{k_1, \ldots , k_d}$ must also avoid
$P\otimes R^{k_1, \ldots , k_d}$. The left inequality of (\ref{block}) is then the result of (\ref{R}) and Theorem \ref{lowerbound}.

To prove the right inequality of (\ref{block}), we follow Hesterberg's idea for the $2$-dimensional case \cite{H} to 
estimate $f(n,P\otimes R^{k_1, \ldots , k_d}, d)$ first for $n = c^m$, where $m$ is an arbitrary positive integer and $c$ 
is a positive integer to be determined, and then for all other positive integers $n$.  

We make use  of the upper bound in Theorem \ref{upperbound}
\begin{equation}
\label{Kconstant}
f(n, R^{k_1, \ldots , k_d}, d) \leq g(n) \ , 
\end{equation}
where $g(n) = K n^{d - \alpha(k_1, \ldots , k_d)}$ for some positive constant $K$, and claim that
\begin{equation}
\label{cm}
 f(c^m,P\otimes R^{k_1, \ldots , k_d}, d) \leq  2 c^d g(c^m).
\end{equation}

We justify the claim by induction. The base case of $m=0$ is trivially true. Suppose that 
\begin{equation}
\label{indu}
f(n,P\otimes R^{k_1, \ldots , k_d},d) \le 2c^d g(n) 
\end{equation}
for $n = c^m$. We show that 
$f(cn,P\otimes R^{k_1, \ldots , k_d},d)\le 2c^d g(cn)$.

Let $A$ be a $d$-dimensional $cn\times \cdots \times cn$ matrix avoiding $P\otimes R^{k_1, \ldots , k_d}$ with $f(cn, P \otimes R^{k_1, \ldots , k_d}, d)$ total $1$-entries. We divide $A = (a_{i_1, \ldots , i_d})$ into $c^d$ 
disjoint submatrices of size $n \times \cdots \times n$. We label these submatrices by $S(i_1, \ldots , i_d) = (s_{j_1, \ldots , j_d})$, where 
$$s_{j_1, \ldots , j_d} =a_{j_1+n(i_1-1), \ldots , j_d+n(i_d-1)} \ .$$
These are called $S$ submatrices throughout the paper. 

Let $C$ be the $d$-dimensional $c \times \cdots \times c$  matrix such that $c_{i_1, \ldots , i_d}=1$ 
if submatrix $S(i_1, \ldots , i_d)$ of $A$ contains $R^{k_1, \ldots , k_d}$ and that $c_{i_1, \ldots , i_d}=0$ otherwise.  
Since any two $1$-entries of the  permutation matrix $P$ differ in all coordinates,  $C$ must avoid $P$ or else $A$ contains $P\otimes R^{k_1, \ldots , k_d}$.

We can classify all the $S$ submatrices of $A$ into two classes.

\medskip

\noindent
{\bf Case 1: $S$ contains $R^{k_1, \ldots , k_d}$} 

Since $C$ avoids $P$, there are at most $f(c,P,d)$ such $S$ submatrices. Clearly each $S$ 
submatrix must avoid $P \otimes R^{k_1, \ldots , k_d}$, so it has at most $f(n,P\otimes R^{k_1, \ldots , k_d},d)$ 
$1$-entries. There are at most $f(c,P,d)f(n,P\otimes R^{k_1, \ldots , k_d},d)$ $1$-entries from this type of $S$ submatrices.

\medskip

\noindent
{\bf Case 2: $S$ avoids $R^{k_1, \ldots , k_d}$} 

There are at most $c^d$  such submatrices in total. Each has at most $f(n,R^{k_1, \ldots , k_d},d)$ $1$-entries. 
There are at most $c^df(n,R^{k_1, \ldots , k_d},d)$ $1$-entries from the second type of $S$ submatrices.

\medskip

\noindent
Summing the numbers of $1$-entries in both cases gives 
\begin{equation}
f(cn,P\otimes R^{k_1,...,k_d},d)\le f(c,P,d)f(n,P\otimes R^{k_1, \ldots , k_d},d)+c^df(n,R^{k_1, \ldots , k_d},d)  . \no
\end{equation}
On the right hand side of the inequality, $f(n,P\otimes R^{k_1, \ldots , k_d},d)$ has an upper bound $2c^d g(n)$ 
because of the inductive assumption (\ref{indu}) and $f(n,R^{k_1, \ldots , k_d},d)$ has an upper bound $g(n)$ by (\ref{Kconstant}). Since $f(c,P,d) = O(c^{d-1})$ 
for any permutation matrix $P$ \cite{KM},  
there exists a constant $L$ such that $f(c,P,d) \le Lc^{d-1}$. Because at least two of $k_1, k_2, \ldots , k_d$ are greater than $1$, it follows from Proposition \ref{alpha}
that $\alpha < 1$. Hence, the integer $c$ can be chosen 
so large that $2 L c^{\alpha - 1} \leq  1$. Therefore,
\begin{eqnarray*}
f(cn,P\otimes  R^{k_1,...,k_d},d)
\le (L c^{d-1})(2c^d g(n))+c^dg(n)
\le [2L  c^{\alpha(k_1, \ldots, k_d) - 1}]c^{d}g(cn)+c^dg(cn)
\le2c^dg(cn) \ ,
\end{eqnarray*}
where we use $g(n) = K n^{d - \alpha}$ in the second inequality.  This completes our induction and hence proves equation (\ref{cm}).

Finally,  we estimate $f(n,P\otimes R^{k_1, \ldots , k_d}, d)$ for all positive integers $n$.
\begin{eqnarray*}
f(n, P \otimes R^{k_1,...,k_d}, d) &=& f(c^{\log_c n},P \otimes R^{k_1, \ldots , k_d},d) \\
&\le& f(c^{\lceil\log_c n \rceil},P \otimes R^{k_1, \ldots , k_d},d) \\
& \le & 2c^dg(c^{\lceil\log_c n \rceil}) \\
&\le& 2c^dg(c^{\log_c n +1}) \\
&=& 2c^dg(cn) \\
&\le& 2c^d c^dg(n),
\end{eqnarray*}
where $\lceil\log_c n \rceil$ is the smallest integer $\ge \log_c n$, 
and we use (\ref{cm}) in the second inequality and $g(n) = K n^{d - \alpha}$ in the last inequality.
This proves the right inequality of (\ref{block}).

The proof of Theorem \ref{super} is completed.
\end{proof}

We conclude this section with an observation. If only one of $k_1, \ldots , k_d$ is greater than one, the matrix $P \otimes R^{k_1, \ldots, k_d}$
 is a tuple permutation matrix. By Proposition \ref{alpha}, $\alpha(k_1, \ldots , k_d) = 1$. The proof of Theorem \ref{super} fails in this case, but it can be 
modified to show that $f(n, P \otimes R^{k_1, \ldots , k_d}, d) = O(n^{d - 1 + \epsilon})$, where $\epsilon$ is an arbitrarily small positive number.  To see this, we can replace $g(n)$ of (\ref{Kconstant}) by $g(n) = K n^{d - 1 + \epsilon}$ and choose $c$ so large that $2 L c^{- \epsilon} \leq  1$. In the next section, we improve this result and show that $f(n, P \otimes R^{k_1, \ldots , k_d}, d) = O(n^{d - 1})$. The  method is quite different from that of this section.

\section{Tuple permutation matrices}
\label{tuple}
In this section, we study the extremal function of an arbitrary tuple permutation matrix. As previously mentioned, a tuple permutation matrix is the Kronecker product of a $d$-dimensional permutation matrix and  $ R^{k_1, \ldots , k_d}$, where only one of $k_1, \ldots, k_d$ is larger than unity. We improve Geneson's ideas for $d=2$ case \cite{G} and obtain a tight bound on the extremal function for $d \geq 2$.
 
Suppose $P$ is a permutation matrix. We call a matrix $P \otimes R^{k_1, \ldots , k_d}$ a $j$-tuple  permutation matrix generated by $P$ if one of $k_1, \ldots , k_d$ is equal to $j$ and the rest are unity. In particular, a $j$-tuple permutation matrix is called a double permutation matrix if $j=2$.

Let
\begin{displaymath}
F(n,j,k,d)=\max_{M} f(n,M,d) \ ,
\end{displaymath}
where $M$ ranges through all $d$-dimensional $j$-tuple permutations matrices generated by $d$-dimensional $k \times \cdots \times k$ permutation matrices.

\begin{theo}
\label{main}
For all $j \ge 2$,
$F(n,j,k,d)=\Theta(n^{d-1})$.
\end{theo}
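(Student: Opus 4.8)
The lower bound is immediate: every $j$-tuple permutation matrix $M$ with $j\ge 2$ is $P\otimes R^{1,\dots,1,j}$ (up to a coordinate permutation) for a $k\times\cdots\times k$ permutation matrix $P$, so it has $jk\ge 2$ ones, and Proposition~\ref{Easy} gives $F(n,j,k,d)\ge f(n,M,d)\ge n^{d-1}$. The content is the matching upper bound $F(n,j,k,d)=O(n^{d-1})$, which I would prove by induction on $j$. The base case $j=1$ is exactly the Klazar--Marcus bound $f(n,P,d)=O(n^{d-1})$ for permutation matrices \cite{KM}, since $R^{1,\dots,1,1}$ is a single one and $M=P$.

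For the inductive step, permute coordinates so that the stretched axis is the $d$-th, so $M=P\otimes R^{1,\dots,1,j}$ with $P$ a $k\times\cdots\times k$ permutation matrix; set $M'=P\otimes R^{1,\dots,1,j-1}$, a $(j-1)$-tuple permutation matrix generated by the same $P$. It suffices to prove $f(n,M,d)\le f(n,M',d)+n^{d-1}$, because iterating and invoking the inductive hypothesis then yields $f(n,M,d)\le f(n,P,d)+(j-1)n^{d-1}=O(n^{d-1})$, and taking the maximum over all admissible $M$ gives the theorem. Let $A$ be an $n\times\cdots\times n$ matrix avoiding $M$ with $f(n,M,d)$ ones. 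Fix the partition of the $d$-th axis into $m=\lceil n/j\rceil$ consecutive length-$j$ blocks $T_1,\dots,T_m$. The plan is to delete from each $d$-row of $A$ one carefully chosen $1$-entry --- heuristically, the last $1$ it has in the last block $T_t$ it meets --- obtaining a matrix $A'$ with at least $f(n,M,d)-n^{d-1}$ ones, and to argue that $A'$ avoids $M'$.

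To see why $A'$ should avoid $M'$, suppose not. A copy of $M'$ in $A'$ consists of $d$-rows $\rho_1,\dots,\rho_k$ placed in the $P$-pattern over the first $d-1$ coordinates, where $\rho_s$ carries a $1$ (in $A'$) in every cross-section of a group $H_{\sigma(s)}$ of $j-1$ consecutive selected cross-sections, the groups $H_1<\cdots<H_k$ being disjoint and ordered and $\sigma$ the $d$-th coordinate permutation of $P$. Each $\rho_s$ has, in $A$ but not in $A'$, one further $1$-entry, namely the deleted one. The goal is to enlarge each $H_q$ by one such extra cross-section, taken from the row that uses $H_q$, so that the $k$ enlarged groups remain pairwise disjoint and correctly ordered; the first $d-1$ coordinate selections are kept, and since the $d$-th coordinate maps of $P$ are bijections the ranks all line up, so the enlarged configuration is a copy of $M$ in $A$ --- contradicting the extremality of $A$. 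Together with $f(n,M',d)=O(n^{d-1})$ this closes the induction.

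The main obstacle is precisely this enlargement step: the deleted $1$ of a $d$-row can lie far to the right of the group that row uses, overlapping a later group, so one cannot simply append it. The remedy --- and the point where Geneson's two-dimensional argument \cite{G} must be sharpened for higher $d$ --- is to coordinate (i) which $1$ is removed from each $d$-row with the block partition $T_1,\dots,T_m$, and (ii) which copy of $M'$ in $A'$ is selected, for instance the one whose $d$-th coordinate indices are lexicographically smallest, so that the gaps between consecutive groups $H_q$ are forced to contain the $1$'s needed for the enlargement. Making this bookkeeping go through uniformly --- keeping the enlarged groups separated and ordered while certifying that only $n^{d-1}$ ones are deleted, over all $j-1$ rounds of the induction and over all $j$-tuple permutation matrices generated by $k\times\cdots\times k$ permutation matrices --- is the technical heart of the proof. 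A block-decomposition alternative (partition $A$ into a grid of side-$c$ cubes and run a Marcus--Tardos-type argument on the reduced cube-matrix) is tempting, but as the remark after Theorem~\ref{super} indicates it only delivers $O(n^{d-1+\epsilon})$ without this refinement, so the deletion argument seems unavoidable.
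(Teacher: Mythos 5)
The inductive step of your argument is built on the claimed inequality $f(n,M,d)\le f(n,M',d)+n^{d-1}$, and this inequality is false, so the approach is not merely incomplete but broken. Already for $d=2$, $j=2$, $k=2$, take $P=I_2$, $M=I_2\otimes R^{1,2}$ (the $2\times 4$ matrix with ones at $(1,1),(1,2),(2,3),(2,4)$), and $M'=I_2$. The $n\times n$ matrix $A$ with all entries equal to $1$ in columns $1,2,3$ and $0$ elsewhere avoids $M$, since a copy of $M$ needs four distinct columns each carrying a one of $A$; thus $f(n,M,2)\ge 3n$. On the other hand $f(n,M',2)=f(n,I_2,2)=2n-1$, so $f(n,M',2)+n^{d-1}=3n-1<f(n,M,2)$. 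In particular, after deleting at most one $1$-entry from each of the $n$ rows of this $A$, at least $2n>2n-1$ ones remain, so the resulting $A'$ necessarily contains $I_2=M'$. Hence no choice of deleted entries and no clever selection of the copy of $M'$ can rescue the argument; the obstacle you flag (the deleted one overlapping a later group) is a symptom of the inequality itself being wrong, not a bookkeeping detail. A multiplicative relation such as the paper's Lemma \ref{2j}, $F(n,j,k,d)\le(j-1)F(n,2,k,d)$, is the correct analogue and is easy to prove (thin each $1$-row, keeping every $(j-1)$-th one), but an additive $+n^{d-1}$ relation between consecutive $j$ is too strong.

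The paper in fact takes a completely different route from your proposed induction on $j$. After the reduction to $j=2$ via Lemma \ref{2j}, it partitions a $kn\times\cdots\times kn$ matrix $A$ avoiding the double permutation matrix into $k\times\cdots\times k$ blocks, builds an auxiliary $n\times\cdots\times n$ matrix $Q$ that records where, along each $d$-row of blocks, a new pair of ones in a $1$-row first appears, and shows $Q$ avoids $P$ (Lemma \ref{Q}). It then bounds separately the blocks with two ones sharing a $1$-row (Lemma \ref{wide}), the tall chunks (Lemma \ref{tall}), and the remaining chunks via $Q$, combining these into the recursive inequality of Lemma \ref{Ine}. That recursion is closed by induction on the dimension $d$ together with a strong induction on $n$, with the Klazar--Marcus bound $F(n,1,k,d)=O(n^{d-1})$ entering as input rather than as the base of an induction on $j$. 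This block-decomposition route also has the advantage, noted in the remark after Theorem \ref{main}, of propagating the sharper $2^{O(k)}$ constant used in Section 4, something a $j$-telescoping argument would not obviously do.
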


\noindent
The proof of this theorem is based on a series of lemmas.

Since $F(n,j,k,d)$ has $n^{d-1}$ as a  lower bound in view of Proposition \ref{Easy}, it suffices to prove that it has upper bound $O(n^{d-1})$.

We first observe that $F(n,j,k,d)$ and $F(n,2,k,d)$ are bounded by each other.
\begin{lem}
\label{2j}
$F(n, 2, k, d) \leq F(n, j, k, d) \leq (j-1) F(n, 2, k, d) ~~~~~ \mbox{for $j > 2$} \ .$
\end{lem}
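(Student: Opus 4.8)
The plan is to prove the two inequalities in Lemma \ref{2j} separately, both by direct combinatorial reductions between the extremal functions for $2$-tuple and $j$-tuple permutation matrices.

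For the left inequality $F(n,2,k,d)\le F(n,j,k,d)$, I would argue that any $d$-dimensional $2$-tuple permutation matrix $M$ (generated by some $k\times\cdots\times k$ permutation matrix $P$) is \emph{contained} in some $j$-tuple permutation matrix $M'$ generated by the same $P$. Concretely, if $M = P\otimes R^{k_1,\ldots,k_d}$ with one $k_\ell=2$ and the rest unity, take $M' = P\otimes R^{k'_1,\ldots,k'_d}$ with $k'_\ell=j$ (same $\ell$) and the rest unity; then $M$ is obtained from a submatrix of $M'$ by deleting ones, so containing $M'$ forces containing $M$. Hence a matrix avoiding $M$ need not avoid $M'$, but more usefully: a matrix avoiding $M'$ has at most $F(n,j,k,d)$ ones, and one that avoids $M$ also avoids $M'$, so its number of ones is at most $F(n,j,k,d)$; taking the max over all $2$-tuple $M$ gives $F(n,2,k,d)\le F(n,j,k,d)$. (I should double-check the direction of the containment so the inequality comes out the right way — this is the one routine point to get right.)

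For the right inequality $F(n,j,k,d)\le(j-1)F(n,2,k,d)$, the idea is a ``layer-splitting'' argument on the tuple direction. Fix a $j$-tuple permutation matrix $M = P\otimes R^{k_1,\ldots,k_d}$ with, say, $k_\ell = j$, and let $A$ be an $n\times\cdots\times n$ matrix avoiding $M$ with the maximal number $f(n,M,d)$ of ones. The $j$ consecutive ones of $M$ in direction $\ell$ (within each block) are what make it a $j$-tuple; I would try to show that if $A$ contained ``$(j-1)$-rich'' structure in direction $\ell$ relative to a witness of the underlying $2$-tuple matrix, then patching together $j-1$ such witnesses produces a full copy of $M$. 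Equivalently, decompose the $\ell$-rows of $A$ by a pigeonhole / greedy grouping that bounds, for each potential embedding of the $2$-tuple matrix $P\otimes R$ with $k_\ell=2$, the number of $\ell$-cross-sections participating; iterating $j-1$ times and using that $A$ avoids $M$ to prevent a $j$-fold overlap yields the factor $(j-1)$. The cleanest implementation is probably: color each $\ell$-row of $A$ with $j-1$ colors so that within each color class the induced matrix avoids the double ($2$-tuple) permutation matrix generated by $P$ — this is possible precisely because $A$ avoids the $j$-tuple version — and then each color class contributes at most $F(n,2,k,d)$ ones.

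The main obstacle I anticipate is the right inequality: making the ``$j-1$ colors suffice'' step rigorous requires care about \emph{where} the $j$ consecutive ones of a forbidden copy must sit. One must ensure that $j-1$ color classes each avoiding the double matrix genuinely forbid a $j$-tuple copy — i.e., that a copy of the $j$-tuple matrix, when restricted to any $j-1$ of its $j$ critical $\ell$-cross-sections, contains a copy of the double matrix. Since any $j-1 \ge 2$ consecutive layers of $R^{k_1,\ldots,k_d}$ (with $k_\ell = j$) contain $R$ with $k_\ell = 2$, this should go through, but I would want to state explicitly the submatrix/deletion relationship $P\otimes R^{\ldots,2,\ldots} \preceq P\otimes R^{\ldots,j,\ldots}$ and verify the coloring can be chosen consistently across all $n^{d-1}$ of the $\ell$-rows simultaneously (a single global assignment of layers-to-colors in direction $\ell$, reused in every $\ell$-row, is the natural choice and sidesteps any consistency issue). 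The left inequality, by contrast, is essentially immediate from the containment relation and monotonicity of $f(n,\cdot,d)$ under pattern containment.
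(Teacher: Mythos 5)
Your left inequality is fine (though note the sentence ``a matrix avoiding $M$ need not avoid $M'$'' contradicts what you correctly say two clauses later --- avoiding $M$ \emph{does} imply avoiding $M'$, because $M'$ contains $M$). The real problem is the right inequality: the ``single global assignment of layers-to-colors in direction $\ell$'' cannot work, and this is exactly the point at which your proposal diverges from the paper. Consider $d=2$, $k=1$, so the double is $R^{2,1}$ (a column of $2$ ones) and the $j$-tuple is $R^{j,1}$. Let $A$ have ones in rows $1,\ldots,j-1$ of column $1$ and in rows $2,\ldots,j$ of column $2$; then $A$ avoids $R^{j,1}$. For a fixed row-coloring $\chi\colon\{1,\ldots,n\}\to\{1,\ldots,j-1\}$ whose color classes each avoid $R^{2,1}$, column $1$ forces rows $1,\ldots,j-1$ to get $j-1$ distinct colors and column $2$ forces rows $2,\ldots,j$ to get $j-1$ distinct colors, so rows $1,\ldots,j$ would need $j$ distinct colors --- impossible. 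So no global layer coloring can make every color class avoid the double, and your ``cleanest implementation'' has a genuine gap.

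What actually makes the bound work is a \emph{per-row, rank-based} selection that depends on where the ones sit inside each $\ell$-row, not on the absolute $\ell$-coordinate. The paper takes $A$ avoiding the $j$-tuple $P'$ and, in each $1$-row, sorts the ones as $e_1,e_2,\ldots$ and keeps only $e_1,e_j,e_{2j-1},\ldots$ (every $(j-1)$-st one). The resulting $A'$ has $|A'|\ge |A|/(j-1)$, and $A'$ avoids the double $P$: if $A'$ contained $P$, then in each of the $k$ rows of the copy the two surviving ones are at rank distance $\ge j-1$, so the $j$ consecutive ones of $A$ lying between them (inclusive) in that row --- together across the $k$ rows --- form a copy of $P'$ in $A$, a contradiction. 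This is precisely the per-row greedy step you gestured at with ``pigeonhole / greedy grouping,'' but it must use positional rank within each row rather than a uniform layer coloring, since (as the counterexample shows) the sets of layers carrying many ones differ across rows.
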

\begin{proof}
It suffices to show that 
\begin{equation}
\label{2j'}
f(n, P, d) \leq f(n, P', d) \leq (j-1) f(n, P, d) ,
\end{equation}
where $P$ is a double permutation $2k \times k \times \cdots \times k$ matrix, $P'$ is a $j$-tuple permutation $jk \times k \times \cdots \times k$ matrix, and  both $P$ and $P'$ are generated from the same arbitrary permutation matrix of size $k \times \cdots \times k$.

The left inequality of (\ref{2j'}) follows from the fact that a $d$-dimensional $n \times \cdots \times n$ matrix that avoids $P$ must also avoid $P'$.

To prove the right inequality, we suppose $A$ is a $d$-dimensional $n \times \cdots \times n$ matrix that avoids $P'$ and has $f(n, P', d)$ nonzero entries. 
In each $1$-row of $A$, we list all the $1$-entries $e_1, e_2, \ldots$ in the order of increasing first coordinates and then change all the $1$-entries in this $1$-row 
except $e_1, e_{j}, e_{2j-1}, \ldots$ to $0$-entries. In this way, we obtain a new matrix $A'$, which avoids $P$ since $A$ avoids $P'$.
This together with $|A| \leq (j-1)|A'|$, where $|M|$ denotes the number of $1$-entries in $M$, justifies the right inequality of (\ref{2j'}).
\end{proof}

In view of Lemma \ref{2j}, it suffices to study the upper bound on $f(n, P, d)$, where $P$ is a $d$-dimensional double permutation matrix of size 
$2k \times k \times \cdots \times k$.

Suppose $A$ is an arbitrary $d$-dimensional $kn \times \cdots \times kn$ matrix that avoids $P$. As in Section 2, we study the $S$ submatrices of $A$, which are constructed by dividing $A$ into $n^d$ disjoint submatrices of size $k \times \cdots \times k$ and labeling these submatrices as $S(i_1, \ldots , i_d)$.

The contraction matrix of $A$ is defined to be the $d$-dimensional $n \times \cdots \times n$ matrix $C = \left(c_{i_1,i_2, \ldots , i_d}\right)$ such that $c_{i_1,i_2, \ldots ,i_d}=1$ if $S(i_1, i_2, \ldots , i_d)$ is a nonzero matrix and $c_{i_1,i_2, \ldots , i_d}=0$ if $S(i_1, i_2, \ldots ,i_d)$ is a zero matrix.

We now construct a  $d$-dimensional $n \times \cdots \times n$ zero-one matrix $Q=(q_{i_1, \ldots , i_d})$.  Each entry $q_{i_1, \ldots , i_d}$ is defined based on the $S$ submatrices of $A$.
\begin{enumerate}
\item  $q_{i_1, \ldots , i_d}=0$  if $S(i_1, \ldots , i_d)$ is a zero matrix.

\item  $q_{i_1, \ldots , i_d}=1$ if $S(i_1, i_2, \ldots , i_d)$ is a nonzero matrix and $S(1,i_2, \ldots , i_d)$, $\ldots$, $S(i_1-1,i_2, \ldots , i_d)$ are all zero matrices.

\item Let $x$ be the largest integer less than $i_1$ for which $q_{x,i_2, \ldots , i_d}=1$. Then define $q_{i_1, i_2, \ldots , i_d}=1$ if the augmented matrix 
formed by submatrices $S(x, i_2, \ldots , i_d)$, $\ldots$ , $S(i_1, i_2, \ldots , i_d)$ contains at least two $1$-entries in the same $1$-row, and  $q_{i_1, \ldots , i_d}= 0$ otherwise.

\end{enumerate}

\noindent
\begin{lem}
\label{Q}
$Q$ avoids $P$.
\end{lem}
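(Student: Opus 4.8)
The plan is to argue by contradiction: suppose $Q$ contains the double permutation matrix $P$ of size $2k \times k \times \cdots \times k$, and then exhibit a copy of $P$ inside the original matrix $A$, contradicting the hypothesis that $A$ avoids $P$. Recall that $P = P_0 \otimes R^{2k, k, \ldots, k}$ where $P_0$ is a $k \times \cdots \times k$ permutation matrix, so a copy of $P$ in $Q$ picks out $k$ values in coordinates $2, \ldots, d$ and $2k$ values in the first coordinate, with the $1$-entries of $Q$ at these positions dominating the $2k \times k \times \cdots \times k$ all-ones block aligned according to $P_0$. The key feature to exploit is that along the first coordinate the block is ``double'': for each of the $k$ choices of the other coordinates, $P$ demands two consecutive (in the ordering used to build $Q$) groups of $1$-entries of $Q$ in the first-coordinate direction.

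First I would fix attention on a single $1$-row of $Q$ in the first-coordinate direction — that is, fix values $i_2, \ldots, i_d$ — and analyze what a $1$-entry $q_{i_1, i_2, \ldots, i_d}$ of $Q$ tells us about $A$. By rule (2), the first $1$-entry of this $1$-row of $Q$ corresponds to a nonzero $S$ submatrix, hence at least one $1$-entry of $A$ in the corresponding slab. By rule (3), a later $1$-entry $q_{i_1,\ldots}$ with predecessor index $x$ means the union of the $S$ submatrices $S(x, i_2, \ldots), \ldots, S(i_1, i_2, \ldots)$ contains two $1$-entries of $A$ sharing a common $1$-row (in the first-coordinate direction) of $A$. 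The plan is to use this to show: whenever $Q$ has two $1$-entries $q_{i_1', \vec\imath}$ and $q_{i_1'', \vec\imath}$ with $i_1' < i_1''$ in the same $1$-row, the slab of $A$ spanned by $S$ submatrices with first index in the range $[\,\cdot\,, i_1'']$ contains two $1$-entries of $A$ in a common $1$-row of $A$; more carefully, I want to extract, for each such consecutive pair of $Q$-ones demanded by $P$, two distinct $1$-entries of $A$ in a common $1$-row of $A$, with the first-coordinate positions of these $A$-entries lying in disjoint column-ranges corresponding to the two blocks. This is exactly the ``double'' structure that a copy of $P$ in $A$ requires in the first coordinate.

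Then I would handle the other $d-1$ coordinates, which is where the permutation structure of $P_0$ enters. For the $k$ distinct values of each coordinate $2, \ldots, d$ selected by the copy of $P$ in $Q$, the corresponding $S$ submatrices live in disjoint slabs of $A$ in those coordinate directions, so the $1$-entries of $A$ I extracted for different choices of $(i_2, \ldots, i_d)$ automatically occupy disjoint cross-sections in coordinates $2, \ldots, d$; combined with the first-coordinate analysis above, assembling these entries according to the pattern $P_0$ produces a genuine copy of $P = P_0 \otimes R^{2k,k,\ldots,k}$ in $A$. I expect the main obstacle to be the first-coordinate bookkeeping in the previous paragraph: one must be careful that the ``predecessor'' mechanism in rule (3) really yields, for two $Q$-ones required to be consecutive blocks of the pattern along coordinate $1$, two $A$-entries in a common $1$-row whose first coordinates fall into the two separate block-ranges (rather than both landing in one block, or overlapping), and that the common $1$-row of $A$ is the same across the extraction — reconciling ``same $1$-row in $A$'' with ``the pattern $P_0$ may place the two first-coordinate blocks at different positions in the other coordinates.'' Resolving this will likely require tracking which $1$-row of $A$ each witnessed pair lies in and using the monotonic ordering of first coordinates built into the construction of $Q$, so that rule (3) can be applied repeatedly to chain the witnesses together.
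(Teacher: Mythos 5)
Your overall plan matches the paper's: assume $Q$ contains $P$, and for each of the $k$ pairs of $Q$-ones $e_{2i-1}=q_{x_1,x_2,\ldots,x_d}$, $e_{2i}=q_{x_1',x_2,\ldots,x_d}$ (with $x_1<x_1'$) in the same $1$-row, use the construction of $Q$ to produce two $1$-entries $f_{2i-1},f_{2i}$ of $A$ in a common $1$-row. However, the final paragraph misidentifies what must be verified and asks for something both false and unnecessary. You want the two $A$-witnesses to "fall into the two separate block-ranges" — roughly, one in $S(x_1,\ldots)$ and one in $S(x_1',\ldots)$. Rule~(3) gives no such guarantee: the two witnesses could both lie in a single $S$ submatrix somewhere in the slab, and that is perfectly fine. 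Containment of $P=P_0\otimes R^{2,1,\ldots,1}$ in $A$ only requires that, for each pair $i$, the two entries $f_{2i-1},f_{2i}$ be distinct and share a $1$-row, and that the $k$ pairs occupy first-coordinate slabs disjoint across distinct pairs (so that after sorting all $2k$ first coordinates, the members of each pair are consecutive). The latter is automatic because the $2k$ values $x_1<x_1'$ ranging over pairs are all distinct in $Q$ (since they index a copy of a permutation-type matrix $P$), so the slabs $\{(x_1-1)k+1,\ldots,x_1'k\}$ are pairwise disjoint. The cross-pair disjointness in coordinates $2,\ldots,d$ is likewise automatic from the permutation $P_0$.

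Your second worry, about "chaining" rule~(3), is also unfounded and signals the missing step. Since $q_{x_1,x_2,\ldots,x_d}=1$ and $x_1<x_1'$, the entry $e_{2i}$ cannot be a rule-(2) one, so rule~(3) applies to $e_{2i}$: letting $x$ be its predecessor, the augmented matrix $S(x,x_2,\ldots,x_d),\ldots,S(x_1',x_2,\ldots,x_d)$ contains two $1$-entries of $A$ in a common $1$-row. Because $x\geq x_1$ (the predecessor is the largest index below $x_1'$ whose $Q$-entry is $1$, and $x_1$ qualifies), those two witnesses already lie in the slab from $S(x_1,\ldots)$ to $S(x_1',\ldots)$ — a single application of rule~(3), no chaining, no need to track which $1$-row across multiple steps. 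With that observation the proof is finished as in the paper: assemble $f_1,\ldots,f_{2k}$ and check coordinates as above. As written, your proposal would have you attempt to prove a stronger statement (witnesses in prescribed separate blocks) which is not true in general, so the argument would not close.
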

\begin{proof}
Suppose to the contrary that $Q$ contains $P$. Suppose the $1$-entries $e_1$, $e_2$, $\ldots$ , $e_{2k}$, where $e_{2i-1}$ and  $e_{2i}$ are in the same $1$-row, form a copy of $P$ in $Q$. Denote $e_{2i-1}= q_{x_1, x_2, \ldots , x_d}$ and $e_{2i}= q_{x_1',x_2, \ldots, x_d}$, where $x_1 < x_1'$. Then, by the definition of matrix $Q$, the augmented matrix formed by $S(x_1, x_2, \ldots , x_d), \ldots , S(x_1', x_2, \ldots , x_d)$ contains two $1$-entries, denoted by $f_{2i-1}$ and $f_{2i}$, in the same $1$-row of $A$. The one-entries
$f_1, \ldots , f_{2k}$ form a copy of $P$ in $A$, a contradiction.
\end{proof}

We now study those $S$ submatrices of $A$ which contain two nonzero entries in the same $1$-row. The next lemma is the key difference between our approach and Geneson's approach \cite{G} even for $d=2$.
\begin{lem}
\label{wide}
$A$ has at most $F(n, 1, k, d)$ total $S$ submatrices with two nonzero entries in the same $1$-row.
\end{lem}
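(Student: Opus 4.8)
The plan is to associate to each ``wide'' $S$ submatrix of $A$ — that is, each submatrix $S(i_1,\ldots,i_d)$ containing two $1$-entries in a common $1$-row — a single $1$-entry in an auxiliary $n\times\cdots\times n$ matrix, and then argue that this auxiliary matrix avoids some $1$-tuple permutation matrix generated by a $k\times\cdots\times k$ permutation matrix, so that it has at most $F(n,1,k,d)$ ones by definition of $F$. Concretely, I would build the matrix $W=(w_{i_1,\ldots,i_d})$ on $\{1,\ldots,n\}^d$ by setting $w_{i_1,\ldots,i_d}=1$ exactly when $S(i_1,\ldots,i_d)$ is wide. The number of wide $S$ submatrices is then $|W|$, so it suffices to show $|W|\le F(n,1,k,d)$, which in turn follows once we exhibit a $k\times\cdots\times k$ permutation matrix $P_0$ such that $W$ avoids the $1$-tuple permutation matrix generated by $P_0$ — but a $1$-tuple permutation matrix generated by $P_0$ is just $P_0\otimes R^{1,\ldots,1}=P_0$ itself. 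So the real content is: \emph{$W$ avoids every $k\times\cdots\times k$ permutation matrix}, equivalently $W$ contains no $k\times\cdots\times k$ permutation submatrix at all; hence $|W|\le F(n,1,k,d)$ since $F(n,1,k,d)$ is the max over all $k\times\cdots\times k$ permutation matrices $P_0$ of $f(n,P_0,d)$, and avoiding all of them simultaneously certainly gives a bound of $F(n,1,k,d)$.

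The key step is therefore to show that if $W$ contained a $k\times\cdots\times k$ permutation submatrix, then $A$ would contain the double permutation matrix $P$. Suppose $W$ has ones $w_{y^{(1)}},\ldots,w_{y^{(k)}}$ (with $y^{(t)}=(y^{(t)}_1,\ldots,y^{(t)}_d)$) that form a copy of some $k\times\cdots\times k$ permutation matrix $P_0$, i.e.\ no two of them share any coordinate. For each $t$, the submatrix $S(y^{(t)})$ is wide, so it contains two $1$-entries of $A$ lying in a common $1$-row; call them $f_{2t-1}$ and $f_{2t}$, ordered so that $f_{2t-1}$ has the smaller first coordinate. Within the $kn\times\cdots\times kn$ ambient matrix $A$, the entry $f_{2t-1}$ sits at a coordinate of the form $\big(z^{(t)}_1+k(y^{(t)}_1-1),\,z^{(t)}_2+k(y^{(t)}_2-1),\ldots\big)$ for some local coordinates $z^{(t)}$, and $f_{2t}$ has the same coordinates except the first is larger. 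Because the block-indices $y^{(1)},\ldots,y^{(k)}$ pairwise differ in \emph{every} coordinate, the global coordinates of $f_1,\ldots,f_{2k}$ inherit enough separation: in each of the last $d-1$ directions the $k$ blocks occupy disjoint ranges of $k$ consecutive indices, and in the first direction the pairs $\{f_{2t-1},f_{2t}\}$ likewise lie in disjoint length-$k$ windows, while within each window $f_{2t-1}$ precedes $f_{2t}$. That is exactly the incidence pattern of $P = P_0\otimes R^{2,1,\ldots,1}$: a copy of $P_0$ each of whose ones is replaced by a ``horizontal pair'' in the first coordinate. So $f_1,\ldots,f_{2k}$ form a copy of the double permutation matrix generated by $P_0$ inside $A$. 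Since $A$ was assumed to avoid $P$ — the double permutation matrix of size $2k\times k\times\cdots\times k$ generated by \emph{a fixed} permutation matrix — I need $P_0$ to be that same fixed permutation matrix; this is fine because by Lemma \ref{2j} and the reduction preceding it we are free to take $P$ generated from an arbitrary $k\times\cdots\times k$ permutation matrix, and the argument above produces a copy generated by whatever $P_0$ we find, so really the statement to invoke is that $A$ avoids the \emph{particular} double permutation matrix under consideration, and a $W$-permutation-submatrix realizing \emph{that same} $P_0$ yields the contradiction; permutation submatrices realizing other $P_0$ are handled because $F(n,1,k,d)$ already maxes over all of them.

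I expect the main obstacle to be bookkeeping the coordinate arithmetic cleanly: verifying that pairwise-distinctness of the block indices $y^{(t)}$ in every coordinate, combined with the ``two entries in a common $1$-row of the $k\times\cdots\times k$ block'', forces the $2k$ global entries $f_1,\ldots,f_{2k}$ to respect both the row-structure (each $f_{2t-1},f_{2t}$ genuinely in the same $1$-row of $A$, which is immediate since they agree in all but the first coordinate) and the order-structure ($f_{2t-1}$ before $f_{2t}$ in the first coordinate) and the overall permutation pattern across the $k$ pairs. A minor subtlety worth stating explicitly is that two entries ``in the same $1$-row'' of a $k\times\cdots\times k$ block means they differ only in one of the $d$ coordinate directions; for the double permutation matrix $P = P_0 \otimes R^{2,1,\ldots,1}$ we want that direction to be the first, so I should define ``wide'' (as the lemma's statement and the construction of $Q$ already do) to mean two $1$-entries in the same $1$-row, i.e. agreeing in all coordinates but the first — consistent with the $2k\times k\times\cdots\times k$ normalization fixed just before the lemma. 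With that convention the pattern match is exact and the contradiction with $A$ avoiding $P$ closes the proof.
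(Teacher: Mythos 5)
Your proof is correct and follows essentially the same route as the paper: your $W$ is the contraction matrix of $A$ restricted to its wide $S$ submatrices (the paper's $C'$), and the key step — that a copy of the generating permutation $P'$ inside $W$ lifts to a copy of the double permutation $P$ inside $A$ — is exactly the paper's argument. (One small clarification: your first paragraph's claim that $W$ avoids \emph{every} $k\times\cdots\times k$ permutation matrix is neither needed nor provable from the hypotheses; the correct and sufficient statement, which your second paragraph eventually settles on, is that $W$ avoids the specific $P'$ generating $P$, giving $|W| \le f(n,P',d) \le F(n,1,k,d)$.)
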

\begin{proof} We assume to the contrary that $A$ has more than $F(n,1,k,d)$ such $S$ submatrices. Let $A'$ be formed by changing all $1$-entries in all other $S$ submatrices to $0$-entries in $A$. Suppose that the double permutation matrix $P$ is generated from the permutation matrix $P'$ and that $C'$ is the contraction matrix of $A'$. Matrix $C'$ has more than $F(n,1,k,d) \ge f(n,P',d)$ $1$-entries, so it must contain $P'$. Denote by $e_1, \ldots , e_k$  the $1$-entries in $C'$ forming a copy of $P'$.  Then each of $S(e_1), \ldots, S(e_k)$ is a $S$ submatrix of $A'$ that has at least two nonzero entries in the same $1$-row. All of these pairs of nonzero entries in $S(e_1), \ldots , S(e_k)$ form a copy of $P$ in $A'$.  Hence, $A'$ contains $P$ and so does $A$, a contradiction.
\end{proof}

For each 1-entry $q_{i_1, i_2, \ldots  , i_d}=1$ of $Q$, we define a chunk $C^*(i_1, i_2, \ldots , i_d)$, which is an augmented matrix formed by consecutive $S$ submatrices, as follows \cite{G}. 

\begin{enumerate}

\item If $q_{i_1, i_2, \ldots , i_d}=1$ and $i_1'$ is the smallest integer greater than $i_1$ such that $q_{i_1', i_2, \ldots , i_d}=1$,  then the chunk $C^*(i_1, i_2, \ldots , i_d)$ is defined to be the augmented matrix formed by $S(i_1, i_2, \ldots , i_d)$, $\ldots$ , $S(i_1' - 1, i_2, \ldots , i_d)$.

\item If $q_{i_1, i_2, \ldots , i_d}=1$ and there is no $i_1'>i_1$ such that $q_{i_1', i_2, \ldots , i_d}=1$, then $C^*(i_1, i_2, \ldots  , i_d)$ is the augmented matrix formed by $S(i_1, i_2, \ldots , i_d)$, $\ldots$ , $S(n, i_2, \ldots , i_d)$.

\end{enumerate}

We call a chunk {\it $j$-tall}, where $j=2,3, \ldots , d$, if each of its $j$-cross sections contains at least one 1-entry. The ($d-1$)-dimensional matrix $M' = (m'_{i_1, \ldots, i_{j-1}, i_{j+1}, \ldots, i_d})$ is called the $j$-remainder of a $d$-dimensional matrix $M = (m_{i_1, \ldots , i_d})$ if $m'_{i_1, \ldots , i_{j-1},i_{j+1}, \ldots , i_d}$ is defined to be $1$
when there exists $i_j$ such that $m_{i_1, \ldots , i_d}=1$ and to be $0$ otherwise.

\begin{lem}
\label{tall}
For each $j= 2, 3, \ldots, d$ and each $m=1, \ldots , n$, $A$ has at most $F(n,1+k^{d-2},k,d-1)$ total $j$-tall chunks of the form $C^*(i_1, \ldots , i_{j-1},m,i_{j+1}, \ldots , i_d)$.
\end{lem}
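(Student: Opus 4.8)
The plan is to fix $j \in \{2, \ldots, d\}$ and an index $m \in \{1, \ldots, n\}$ and to bound the number of $j$-tall chunks sitting in the ``slab'' where the $j$-th chunk-coordinate equals $m$. Each such chunk $C^*(i_1, \ldots, i_{j-1}, m, i_{j+1}, \ldots, i_d)$ is an augmented matrix formed by consecutive $S$ submatrices stacked in the first coordinate, so it lives in a $d$-dimensional box of size (some multiple of $k$) $\times\, k \times \cdots \times k$, but its $j$-th coordinate is pinned inside the block labelled $m$, i.e. ranges over only $k$ values. First I would pass from each $j$-tall chunk to its $j$-remainder, which is a $(d-1)$-dimensional object; the point of ``$j$-tall'' is that each of the $k$ $j$-cross sections of the chunk contributes a nonzero entry, so the remainder still ``sees'' enough structure that any forbidden pattern in the remainder lifts back to one in $A$.

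The key step is to encode the collection of these chunks as a single auxiliary $(d-1)$-dimensional matrix and argue it must avoid a suitable tuple permutation matrix. Concretely: within the fixed slab, different $j$-tall chunks occupy disjoint ranges of the first coordinate (by the chunk construction, chunks are delimited by the $1$-entries of $Q$ along the first axis), so I can index them by their ``$Q$-coordinate'' $i_1$. I would build a $(d-1)$-dimensional $n \times \cdots \times n$ matrix $T$ whose coordinates are $(i_1, i_2, \ldots, i_{j-1}, i_{j+1}, \ldots, i_d)$ with a $1$ exactly when the chunk $C^*(i_1, \ldots, i_{j-1}, m, i_{j+1}, \ldots, i_d)$ is $j$-tall. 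The total number of $j$-tall chunks in the slab is then $|T|$. The claim is that $T$ avoids some $(1 + k^{d-2})$-tuple permutation matrix generated from a $k \times \cdots \times k$ permutation matrix: if $T$ contained such a matrix, then for each of the $k$ ``base'' $1$-entries of the underlying permutation matrix we would have a $j$-tall chunk together with $k^{d-2}$ extra $1$-entries of $T$ lined up in the same $1$-row; expanding the $j$-tall chunk across its $k$ $j$-cross sections and using the $S$-submatrix structure in the remaining $d-2$ pinned coordinates, these would assemble into a copy of $P$ (the $2k \times k \times \cdots \times k$ double permutation matrix, or the relevant $j$-tuple matrix) inside $A$, contradicting that $A$ avoids $P$. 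Hence $|T| \le F(n, 1 + k^{d-2}, k, d-1)$, which is the desired bound.

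The main obstacle I anticipate is the bookkeeping in the lifting argument: one must carefully choose which $j$-cross section of the $j$-tall chunk plays the role of the ``second'' copy in each $1$-row, and simultaneously track the $k$ values available in each of the $d-2$ coordinates that are pinned inside an $S$ submatrix, so that the total count $1 + k^{d-2}$ of entries aligned in a $1$-row of $T$ is exactly what is needed — no more, no less — to reconstruct the two-entries-per-$1$-row structure of $P$ after expansion. A secondary subtlety is verifying that the auxiliary matrix $T$ is genuinely $n \times \cdots \times n$ and that distinct $j$-tall chunks in the slab give distinct $1$-entries of $T$ (this uses that along the first coordinate the chunks partition into disjoint consecutive blocks indexed by $Q$'s $1$-entries, and along the other free coordinates the chunk label is literally the coordinate). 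Once the encoding is set up correctly, the avoidance argument and the appeal to the definition of $F(n, 1 + k^{d-2}, k, d-1)$ are routine.
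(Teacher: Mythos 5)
Your auxiliary matrix $T$ is, up to relabeling, exactly the paper's matrix $C$ (the contraction of the $j$-remainder of $A''$, where $A''$ keeps only the starting $S$ submatrices of the $j$-tall chunks), and your lifting argument is the paper's pigeonhole: the $1+k^{d-2}$ chunks picked out by a single $1$-row of $T$ each contribute a nonzero entry to the same $j$-cross section, which has only $k^{d-2}$ available $1$-rows, forcing two of them into one $1$-row of $A$. One small correction to the worry you flag at the end: the ``second copy'' in each $1$-row of $P$ does not come from choosing a second $j$-cross section of the same chunk, but from a second chunk hitting the \emph{same} $j$-cross section (the one determined by the $j$-th coordinate of the corresponding pair of $1$-entries in $P$).
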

\begin{proof}
Assume to the contrary that $A$ has $r$ chunks $C^*_1, C^*_2, \ldots , C^*_r$, where $r>F(n,1+k^{d-2},k,d-1)$,  of the form $C^*(i_1, \ldots , i_{j-1},m,i_{j+1}, \ldots , i_d)$ 
that have $1$-entries in
all their $j$-cross sections. Let $S_1, S_2, \ldots, S_r$ be the starting $S$ submatrices of the chunks $C^*_1, C^*_2, \cdots , C^*_r$, respectively. Let $A'$ be the matrix formed
by changing all $1$-entries of $A$ that do not lie in the chunks $C^*_1, \ldots , C^*_r$ to $0$-entries. We further change all the $1$-entries of $A'$ that do not sit in 
$S_1, \ldots, S_r$ to $0$-entries and denote the resulting matrix by $A''$. 
Denote by $C$ the contraction matrix of the $j$-remainder of $A''$. Then $C$ is a $(d-1)$-dimensional $n \times \cdots \times n$ matrix and it has $r$ ones so it contains every ($1+k^{d-2}$)-tuple 
($d-1$)-dimensional permutation matrix.

We now pick a $(d-1)$-dimensional $(1 + k^{d-2})$-tuple permutation matrix. Since $P$ is a $d$-dimensional double permutation matrix of size $2k \times k \times \cdots \times k$ and $j \neq 1$, the $j$-remainder of $P$ is a $(d-1)$-dimensional double permutation matrix
of size $2k \times k \times \cdots \times k$. We denote by $P'$ the $(d-1)$-dimensional $(1 + k^{d-2})$-tuple permutation matrix of size $(1+k^{d-2})k \times k \times \cdots \times k$ such that $P'$ and the $j$-remainder of $P$ are generated from the same $(d-1)$-dimensional permutation matrix.

For each pair of ones in a row of $P$ with coordinates $(x_1,x_2, \ldots , x_d)$ and $(x_1 + 1,x_2, \ldots , x_d)$, $P'$ has corresponding  ($1 + k^{d-2}$) 
ones with coordinates $(\tilde{x}_1, x_2, \ldots, x_{j-1}, x_{j+1}, \ldots , x_d)$,   $(\tilde{x}_1 + 1, x_2, \ldots , x_{j-1}, x_{j+1}, \ldots , x_d)$,  $\cdots$ , 
$(\tilde{x}_1 + k^{d-2}, x_2, \ldots , x_{j-1}, x_{j+1}, \ldots, x_d)$  
in a single $1$-row. Since $C$ contains $P'$, this set of ($1  + k^{d-2}$) ones is represented by $1$-entries with coordinates 
$(t_1(\lambda), t_2, \ldots , t_{j-1}, t_{j+1}, \ldots , t_d)$, where $\lambda = 1, 2, \ldots , 1 + k^{d-2}$,  in the same $1$-row of $C$.

Let $S(t_1(\lambda), t_2, \ldots , t_{j-1}, m, t_{j+1}, \ldots , t_d)$,   $1 \leq \lambda \leq 1 + k^{d-2}$, be the corresponding $S$ submatrices of $A'$. By the construction of $A'$, $A''$ and $C$, these $S$ submatrices are the starting $S$ submatrices of some of the chunks $C^*_1, \ldots , C^*_r$. Each of these ($1 + k^{d-2}$) chunks has $1$-entries in every $j$-cross section; in particular each chunk has a nonzero entry 
with the same $j^{\text{th}}$ coordinate $(m-1)k+x_j$. There are at least $1+k^{d-2}$ nonzero entries with this given $j^{\text{th}}$ coordinate in these chunks, but there are $k^{d-2}$ $1$-rows in a $j$-cross section of these chunks. By the pigeonhole principle,  there exist a pair of $1$-entries in the same $1$-row of $A'$.

Hence, for each pair of ones in the same $1$-row of $P$, we have a corresponding pair of ones in the same $1$-row of $A'$. Since two $1$-entries of $P$ not in the same 
$1$-row differ in all their coordinates, $A'$ contains $P$, and so does $A$; a contradiction.
\end{proof}

We can now derive a recursive inequality on $F(n, j, k, d)$, the resolution of which gives an upper bound on $F(n,j,k,d)$.

\begin{lem}
\label{Ine}
Let $d$, $s$, $n$ be positive integers where $d \ge 2$. Then
\begin{eqnarray}
F(kn,2,k,d) &\le& (d-1)nk^{d-1}F(n, 1+k^{d-2},k,d-1) + k^dF(n,1,k,d) 
 +(k-1)^{d-1}F(n,2,k,d) . ~~~~~
\label{IN}
\end{eqnarray}
\end{lem}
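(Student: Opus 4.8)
The plan is to take an arbitrary $d$-dimensional $kn \times \cdots \times kn$ matrix $A$ that avoids a double permutation matrix $P$ of size $2k \times k \times \cdots \times k$ generated by an arbitrary $k \times \cdots \times k$ permutation matrix, and to bound $|A|$ by a sum of three contributions corresponding exactly to the three terms on the right-hand side of (\ref{IN}). The natural device is the decomposition of $A$ into $n^d$ blocks $S(i_1,\ldots,i_d)$ of size $k \times \cdots \times k$, together with the auxiliary matrix $Q$ and the chunks $C^*(i_1,\ldots,i_d)$ already set up before the statement. Every $1$-entry of $A$ lies in exactly one block $S$, and every nonzero block $S$ lies in exactly one chunk $C^*$; so I would classify the $1$-entries of $A$ according to the type of the chunk containing them.

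First I would account for the ``narrow'' blocks: if a block $S$ contains at most one $1$-entry per $1$-row in \emph{every} direction, then it has at most $(k-1)^{d-1}$ nonzero entries (this is a plain counting bound on a $k \times \cdots \times k$ array with at most one $1$ per line in each of the first $d-1$ directions, say; one must be slightly careful which directions to fix, but the bound $(k-1)^{d-1}$ is what the term wants). The contraction matrix $C$ of $A$ is $n \times \cdots \times n$, avoids the underlying permutation matrix (since $A$ avoids $P$, hence avoids the permutation matrix tensored with a single $1$), so $C$ has at most $F(n,1,k,d)$ ones, i.e.\ at most $F(n,1,k,d)$ nonzero blocks in total — but this overcounts. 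The cleaner split is: by Lemma \ref{wide}, $A$ has at most $F(n,1,k,d)$ blocks that contain two nonzero entries in a common $1$-row (in any direction), and each such block trivially has at most $k^d$ ones, giving the $k^d F(n,1,k,d)$ term; every remaining nonzero block has at most one $1$ per $1$-row in each direction, hence at most $(k-1)^{d-1}$ ones, and there are at most $F(n,2,k,d)$ of them — here I would argue that the $Q$-based/chunk structure forces the count of such blocks to be controlled, but actually the simplest route is that the number of \emph{all} nonzero blocks is at most $F(n,1,k,d)$, which is weaker; so I expect the intended bookkeeping is: narrow blocks contribute at most $(k-1)^{d-1}$ each, and the number of chunks times blocks-per-chunk is handled by Lemma \ref{tall}. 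Let me restate this more carefully below.

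The core of the argument is the chunk analysis. By Lemma \ref{Q}, $Q$ avoids $P$, and the $1$-entries of $Q$ are in bijection with the chunks; since $Q$ is $n \times \cdots \times n$ and double-permutation-avoiding, $|Q| \le F(n,2,k,d)$, so there are at most $F(n,2,k,d)$ chunks. A chunk that is not $j$-tall for any $j \in \{2,\ldots,d\}$ is, after collapsing its $1$-direction, ``short'' in some transverse direction; such a chunk contributes boundedly to the narrow/permutation count and I would absorb it into the $(k-1)^{d-1}F(n,2,k,d)$ term by showing each such chunk, restricted appropriately, behaves like a single permutation-bounded block contributing $(k-1)^{d-1}$ per chunk. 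For the $j$-tall chunks: fix $j$ and fix the value $m$ of the starting block's coordinate in direction $j$; by Lemma \ref{tall} there are at most $F(n,1+k^{d-2},k,d-1)$ such chunks for each of the $n$ choices of $m$ and each of the $d-1$ choices of $j$, giving $(d-1)n\,F(n,1+k^{d-2},k,d-1)$ chunks; each chunk, being a union of at most $n$ blocks of size $k\times\cdots\times k$ arranged along one direction, has at most $k^{d-1}\cdot(\text{length})$ — and I need the length bound, which is where $k^{d-1}$ enters multiplicatively. Summing: $j$-tall chunks contribute at most $(d-1)nk^{d-1}F(n,1+k^{d-2},k,d-1)$, matching the first term.

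\textbf{Main obstacle.} The delicate point is making the trichotomy of $1$-entries genuinely exhaustive and non-overlapping, and getting the per-chunk size bounds to come out as exactly $k^{d-1}$ (first term), $k^d$ (second term), and $(k-1)^{d-1}$ (third term) rather than with worse constants. In particular, a chunk may be $j$-tall for several $j$ simultaneously, and a block with two entries in a common $1$-row might also sit inside a tall chunk, so I expect the real work is a careful charging scheme: assign each $1$-entry of $A$ to \emph{one} of the three buckets (tall-chunk / wide-block / narrow-block) by a fixed priority rule, then bound each bucket separately using $|Q|\le F(n,2,k,d)$, Lemma \ref{wide}, and Lemma \ref{tall} respectively. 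The arithmetic of ``how many blocks can a chunk of a given type contain'' — a chunk is determined by consecutive blocks between successive $1$-entries of $Q$ in direction $1$, so its length is at most $n$ and the $k^{d-1}$ factor is the size of a single $(d-1)$-dimensional cross-slice of one block — is routine once the charging is fixed, so I would not dwell on it.
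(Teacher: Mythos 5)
You have the right ingredients — $Q$, Lemma \ref{wide}, Lemma \ref{tall}, and the three target coefficients $k^{d-1}$, $k^d$, $(k-1)^{d-1}$ — but the classification and the per-chunk size bounds are not quite right, and you repeatedly waver between counting blocks and counting chunks. The paper's proof partitions the \emph{chunks} (not the $1$-entries, and not primarily the blocks) into three mutually exclusive types by asking two yes/no questions about each chunk: (i) does it have two $1$-entries in a common $1$-row? (ii) if not, is it $j$-tall for some $j\in\{2,\ldots,d\}$? A chunk falls into exactly one case, every $1$-entry of $A$ lies in a unique $S$ block which lies in a unique chunk, so the trichotomy is automatically exhaustive and non-overlapping. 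No charging scheme or priority rule is needed; the only ``overlap'' is that in Case 2 a chunk could be $j$-tall for several $j$, but summing over $(j,m)$ only overcounts, which is harmless for an upper bound.

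The genuine gap is the per-chunk size estimates, which you flag as ``routine once the charging is fixed'' but which are actually the crux. The condition ``no two $1$-entries in the same $1$-row'' is what makes them work, and you never invoke it. A chunk has exactly $k^{d-1}$ $1$-rows (its cross-section transverse to direction $1$ is $k\times\cdots\times k$, $(d-1)$-dimensional), regardless of how many $S$ blocks it spans. If no $1$-row contains two $1$-entries, the chunk has at most $k^{d-1}$ ones — there is no ``length'' factor, contrary to your $k^{d-1}\cdot(\text{length})$ estimate, and indeed a length factor would be fatal since a chunk can span up to $n$ blocks. If additionally the chunk fails to be $j$-tall for every $j=2,\ldots,d$, then for each such $j$ some $j$-cross section of the chunk is entirely zero, which eliminates at least one of the $k$ possible values of the $j$-th coordinate of any $1$-entry; combined over $j=2,\ldots,d$ this cuts the count of usable $1$-rows to $(k-1)^{d-1}$, hence at most $(k-1)^{d-1}$ ones. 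Your proposed derivation of $(k-1)^{d-1}$ — ``at most one $1$ per line in each direction within a block'' — is both the wrong condition and gives the wrong number (that condition yields $k^{d-1}$, not $(k-1)^{d-1}$). Finally, for Case 1 you need the observation, following from the definition of $Q$, that a chunk containing two $1$-entries in a common $1$-row has only one nonzero $S$ block (its starting block), so that Lemma \ref{wide} indeed bounds the number of such chunks by $F(n,1,k,d)$ with at most $k^d$ ones each.
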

\begin{proof} We count the maximum number of $1$-entries in $A$ by counting the number of ones in three types of chunks of $A$.

\medskip

\noindent
{\bf Case 1: chunk has two $1$-entries in the same $1$-row} 

In view of the definitions of matrix $Q$ and a chunk, such a chunk has only one nonzero $S$ submatrix so it has at most $k^d$  nonzero entries. By Lemma \ref{wide}, there are at most $F(n,1,k,d)$ such $S$ submatrices. Chunks of this type contain at most $k^d F(n, 1, k, d)$ nonzero entries.

\medskip

\noindent
{\bf Case 2: chunk is $j$-tall for some $j=2, 3, \ldots, d$ and has no two $1$-entries in the same $1$-row}

There are $(d-1)$ choices for $j$-tall since $j=2, 3, \ldots , d$. For each $j$, the integer $m$ of Lemma \ref{tall} can be $1, \ldots , n$. A $j$-tall chunk with no two $1$-entries in the same row has at most $k^{d-1}$ $1$-entries. For each pair of $j$ and $m$,  there are at most $F(n, 1 + k^{d-2}, k, d-1)$ such chunks in view of Lemma \ref{tall}. In total, chunks of this type contain at most $(d-1)nk^{d-1}F(n, 1+k^{d-2},k,d-1)$ nonzero entries.

\medskip

\noindent
{\bf Case 3: chunk is not $j$-tall for any $j=2, 3, \ldots , d$ and has no two $1$-entries in the same $1$-row}

Such a chunk has at most $(k-1)^{d-1}$ ones. By the definition of a chunk, the number of chunks is equal to the number of nonzero entries in matrix $Q$, which, by Lemma \ref{Q}, has 
at most $F(n,2,k,d)$ nonzero entries. There are at most $(k-1)^{d-1} F(n,2,k,d)$ ones in chunks of this type.

Summing all cases proves Lemma \ref{Ine}.
\end{proof}

We are now ready to finish the proof of  Theorem \ref{main}.

\begin{proof}[Proof of Theorem \ref{main}]
We proceed by induction on $d$. The base case of $d=1$ is trivial. We then make the inductive assumption that 
\begin{equation}
F(n,j,k,d-1)=O(n^{d-2}) ~~ \mbox{for some $d \geq 2$}  \label{inductive}
\end{equation}
and prove that $F(n,j,k,d) = O(n^{d-1})$.

We first use Lemma \ref{Ine} to show that 
\begin{equation}
F(n,2,k,d) \leq k(c+dk)n^{d-1} \label{j=2} ,
\end{equation}
where $c$ is a positive constant to be determined.

We simplify inequality (\ref{IN}) of Lemma \ref{Ine}. Inductive assumption (\ref{inductive}) implies that $F(n, 1+ k^{d-2},k,d-1)=O(n^{d-2})$.  
We also have $F(n,1,k,d)=O(n^{d-1})$, which was proven by Marcus and Tardos \cite{MT} for $d=2$ and by  Klazar and Marcus \cite{KM} for $d > 2$. Hence,  we can choose a sufficiently large constant $c$ 
such that the sum of the first two terms on the right hand side of (\ref{IN}) is bounded by $c n^{d-1}$. Therefore,
\begin{equation}
 F(kn,2,k,d) \le (k-1)^{d-1}F(n,2,k,d)+cn^{d-1}  ~~~~~ \mbox{for all $n$.} \label{sn}
\end{equation}

We then use another induction, which is a strong induction on $n$, to prove inequality (\ref{j=2}). The base case of $n \leq k$ is trivial.  Assuming that (\ref{j=2}) is true for all $n < m$, we show that (\ref{j=2}) also holds for $n =m$.

Let $N$ be the maximum integer that is less than $m$ and divisible by $k$. A $d$-dimensional $m \times \cdots \times m$ zero-one  matrix has at most $m^d-N^d \le m^d-(m-k)^d \le dkm ^{d-1}$ more entries than a $d$-dimensional $N \times N \times \cdots \times N$ matrix. Thus we have $F(m,2,k,d)\le F(N,2,k,d)+dkm^{d-1}$. This together with (\ref{sn}) gives
\begin{eqnarray*}
F(m, 2, k, d) &\le&  (k-1)^{d-1}F\left({N \over k}, 2,k,d\right)+c\left({N \over k}\right)^{d-1}+dkm^{d-1} \\
&\le&  (k-1)^{d-1}k(c+dk) \left({N \over k}\right)^{d-1} + c\left({N \over k}\right)^{d-1}+dkm^{d-1} \\
&\le& (k-1)(c+dk)N^{d-1}+(c+dk)m^{d-1} \\
&\le& k(c+dk)m^{d-1} \ ,
\end{eqnarray*}
where we use the strong inductive assumption in the second inequality. Hence, inequality (\ref{j=2}) holds for $n=m$. The strong induction shows that (\ref{j=2}) is true for all positive integers $n$.

Having verified the inequality (\ref{j=2}), we continue to complete the induction on $d$ by showing that $F(n,j,k,d)=O(n^{d-1})$. This easily follows from inequality (\ref{j=2}) and Lemma \ref{2j}.
We have completed the induction.

Since $F(n,j,k,d) = \Omega(n^{d-1})$ in view of Proposition \ref{Easy}, this together with  $F(n,j,k,d)=O(n^{d-1})$ completes the proof of Theorem \ref{main}.
\end{proof}

We conclude this section with a remark. In the paragraph between two inequalities (\ref{j=2}) and (\ref{sn}), we use Klazar and Marcus' result \cite{KM} $F(n, 1, k, d) = O(n^{d-1})$ to 
choose the constant $c$ in (\ref{j=2}). In fact, Klazar and Marcus gave a more refined upper bound ${F(n,1,k, d) \over n^{d-1}} = 2^{O(k \log k)}$. This allows us to improve the inductive assumption (\ref{inductive}) to ${F(n,j,k,d-1) \over n^{d-2}} = 2^{O( k \log k)}$ and choose 
$c = 2^{O(k \log k)}$.  In this way, we are able to prove ${F(n,j,k,d) \over n^{d-1}} = 2^{O(k \log k)}$. 

In the next section, we improve Klazar and Marcus upper bound from $2^{O(k \log k)}$ to $2^{O(k)}$. As a consequence, $c = 2^{O(k)}$ and hence ${F(n,j,k,d) \over n^{d-1}} = 2^{O(k)}$. Lemma \ref{wide} is crucial in making the extension from ${F(n,1,k,d) \over n^{d-1}} = 2^{O(k)}$ to ${F(n,j,k,d) \over n^{d-1}} = 2^{O(k)}$ possible.

\section{Limit inferior and limit superior }
\label{constant}
In this section, we consider matrices $P$ such that $f(n, P, d) = \Theta(n^{d-1})$. This tight bound implies that $\{{ f(n, P, d) \over n^{d - 1}} \}$ is a bounded sequence. We are interested in the limits of this sequence.

When $d=2$, Pach and Tardos showed that $f(n, P, 2)$ is super-additive \cite{PT}.
By Fekete's Lemma on super-additive sequences \cite{Fekete}, the sequence $\{ {f(n, P, 2) \over n} \}$ is convergent. The limit is known as the F\"uredi-Hajnal limit.

When $d > 2$,  it is still an open problem to prove the convergence of the sequence $\{ {f(n, P, d) \over n^{d -1 }} \}$. Instead, we consider the limit inferior and limit superior of the sequence and define
\begin{displaymath}
\label{ISd}
I(P, d) = \liminf_{n \rightarrow \infty} { f(n, P, d) \over n^{d - 1}} \ , ~~~~~   S(P, d) = \limsup_{n \rightarrow \infty} { f(n, P, d) \over n^{d - 1}} \ .
\end{displaymath}
We derive lower bounds on $I(P,d)$ and an upper bound on $S(P,d)$. These bounds are written in terms of the size of $P$.

The main ideas in this section are Fox's interval minor containment \cite{Fox} and our observation that the extremal function is super-homogeneous in higher dimensions.

\subsection{An improved upper bound}

Klazar and Marcus \cite{KM} showed that $S(P, d) =2^{O(k \log k)}$ 
for $k \times \cdots \times k$ permutation matrices $P$. In this subsection, we extend Fox's ideas for the $d=2$ case \cite{Fox} to improve this upper bound to $2^{O(k)}$ for $d \geq 2$. We then show that the new upper bound also holds for tuple permutation matrices, which is a new result even for $d=2$.

\begin{theo} 
\label{IS}
If $P$ is a $d$-dimensional $k \times \cdots \times k$ permutation matrix or a tuple permutation matrix generated by such a permutation matrix, then
$S(P,d) = 2^{O(k)}$.
\end{theo}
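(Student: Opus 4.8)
The plan is to follow Fox's strategy for $d=2$ \cite{Fox}, replacing the ingredient "the $\Theta(n)$ bound for permutation matrices" by the multidimensional bound $F(n,j,k,d)=O(n^{d-1})$ that we established in Theorem \ref{main}. The starting point is the observation that it suffices to bound $S(P,d)$ for $P$ a $d$-dimensional $k\times\cdots\times k$ permutation matrix, since for a tuple permutation matrix $P'=P\otimes R^{k_1,\dots,k_d}$ generated by such a $P$ Lemma \ref{2j} (and the trivial inclusion $f(n,P,d)\le f(n,P',d)\le (j-1)f(n,P,d)$ used there) shows the two extremal functions differ by a factor $O(k)$, hence $S(P',d)=2^{O(k)}$ whenever $S(P,d)=2^{O(k)}$. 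So I would first dispense with the tuple case by this reduction and then concentrate on permutation matrices.

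For a permutation matrix $P$, I would set up the self-reducing recursion exactly as in Section \ref{tuple}. Take a $kn\times\cdots\times kn$ matrix $A$ avoiding $P$, cut it into $n^d$ blocks of size $k\times\cdots\times k$, and form the contraction matrix $C$. The key point, as in Fox's argument, is to bound the number of ones of $A$ in terms of $f(n,P,d)$ (the weight carried by the "coarse" structure $C$, which also avoids $P$ or a closely related pattern) plus a controlled overcount coming from blocks that are "wide" or "tall", i.e. carry a whole row/cross-section's worth of ones. The first type of block is bounded using $F(n,1,k,d)=2^{O(k)}n^{d-1}$ — this is precisely the refined Klazar–Marcus / (improved) bound on permutation matrices, which I may assume available in the form ${F(n,1,k,d)\over n^{d-1}}=2^{O(k\log k)}$ from \cite{KM} and which the full paper sharpens to $2^{O(k)}$; the second type is bounded by a $(d-1)$-dimensional instance, handled by induction on $d$ with a bound of the shape $2^{O(k)}n^{d-2}$. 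Assembling Cases 1–3 as in Lemma \ref{Ine} gives an inequality of the form
\begin{equation}
f(kn,P,d)\ \le\ (k-1)^{d-1}\,f(n,P,d)\ +\ 2^{O(k)}\,n^{d-1}, \nonumber
\end{equation}
and since $(k-1)^{d-1} < k^{d-1}$ while the blow-up factor is $k^{d-1}$, iterating this recursion (first along $n=k^m$, then filling in all $n$ by the rounding argument already used in the proof of Theorem \ref{super} and Theorem \ref{main}) yields $f(n,P,d)\le 2^{O(k)}n^{d-1}$, i.e. $S(P,d)=2^{O(k)}$.

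The main obstacle is getting the constant in the exponent right — i.e. genuinely $2^{O(k)}$ rather than $2^{O(k\log k)}$. The naive Kronecker-product / "divide into $k\times\cdots\times k$ blocks" recursion loses a factor like $k!$ per step because of the crude "pick one representative of $P$ in the contraction matrix" step, which is what produces the extra $\log k$. Fox's device to avoid this is to replace exact pattern containment by \emph{interval minor containment}: one argues about which coarse blocks are forced to contain $P$ as an interval minor, and a one-dimensional counting of the number of such blocks per row gives the improved bound. So the delicate part is to formulate the right $d$-dimensional analogue of interval minor containment, verify that the contraction matrix $C$ avoids $P$ as an interval minor whenever $A$ does, and check that the counting lemma bounding wide/tall chunks carries the $2^{O(k)}$ rather than $2^{O(k\log k)}$ dependence through all $d-1$ coordinate directions simultaneously. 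Once that combinatorial bookkeeping is in place, plugging $c=2^{O(k)}$ into the recursion of Lemma \ref{Ine} and running the two nested inductions (on $d$, then strong induction on $n$) exactly as in the proof of Theorem \ref{main} completes the argument; the reduction via Lemma \ref{2j} then transfers the bound to tuple permutation matrices, which as noted is new even for $d=2$ and relies on Lemma \ref{wide}.
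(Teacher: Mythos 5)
Your proposal correctly identifies the two structural steps of the paper's argument — Fox's interval-minor device is what produces the $2^{O(k)}$ bound for permutation matrices, and the Section~\ref{tuple} machinery then propagates that bound to tuple permutation matrices — but the heart of the proof is left unexecuted, and one of the two reductions as you state it is not actually valid.

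First, the reduction of the tuple case to the permutation case is misstated. Lemma~\ref{2j} and the inequality $f(n,P,d)\le f(n,P',d)\le(j-1)f(n,P,d)$ in its proof relate a \emph{double} permutation matrix ($j=2$) to a $j$-tuple permutation matrix; there is no analogous multiplicative inequality comparing a permutation matrix ($j=1$) to a double permutation matrix. Passing from $j=1$ to $j=2$ is precisely what the chunk/tall/wide analysis of Lemma~\ref{Ine} accomplishes, with Lemma~\ref{wide} playing the key role of controlling chunks that carry two ones in a row. You acknowledge this in your last sentence, but the opening claim that ``the two extremal functions differ by a factor $O(k)$'' is wrong, and the reduction has to run through the recursion~(\ref{sn}) with constant $c$ improved to $2^{O(k)}$, exactly as the remark at the end of Section~\ref{tuple} lays out.

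Second, and more fundamentally, the central estimate is missing. Cutting a $kn\times\cdots\times kn$ matrix into $n^d$ blocks of size $k\times\cdots\times k$ and forming a contraction matrix is the Klazar--Marcus setup and cannot by itself yield better than $2^{O(k\log k)}$: the number of wide or tall $S$ submatrices is controlled by binomial-coefficient factors, and iterating at block scale $k$ loses a $\log k$ in the exponent. The paper avoids this by working with the interval-minor extremal function $m(n,R^{k,\ldots,k},d)$, using the inequality $f(n,P,d)\le m(n,R^{k,\ldots,k},d)$, and taking blocks of \emph{exponential} side length $t=2^{dk}$ with a threshold $s=2^{k-1}$. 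The actual content is in Lemmas~\ref{mblock}, \ref{f} and~\ref{closed}: the $d$-dimensional block recursion for $m$, the reduction-in-$k_1$ recursion for the row-count function $f_{k_1,\ldots,k_d}(n,t,s,d)$, and its closed form ${2^{\ell-1}t^2\over s}\,m(n,R^{k,\ldots,k},d-1)$. You flag this as ``the delicate part'' and ``combinatorial bookkeeping,'' but without these lemmas the recursion you wrote down,
\begin{equation}
f(kn,P,d)\ \le\ (k-1)^{d-1}\,f(n,P,d)\ +\ 2^{O(k)}\,n^{d-1}, \nonumber
\end{equation}
is unsupported: the block-of-size-$k$ decomposition does not give a $2^{O(k)}$ error term, and this inequality is in any case the shape of the \emph{double}-permutation recursion from Lemma~\ref{Ine}, not the permutation-matrix estimate one needs here. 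In short, the proposal names the right ingredients but does not prove the bound; the three interval-minor lemmas and the choice $t=2^{dk}$, $s=2^{k-1}$ are exactly what is needed and exactly what is absent.
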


The proof uses the notion of cross section contraction and interval minor containment \cite{Fox}. Contracting several consecutive $\ell$-cross sections of a $d$-dimensional matrix means that we replace these $\ell$-cross sections by a single $\ell$-cross section, placing a one in an entry of the new cross section if at least one of the corresponding entries in the original $\ell$-cross sections is a $1$-entry and otherwise placing a zero in that entry of the new cross section. The contraction matrix, as defined in Section 3, of an $sn \times \cdots \times sn$ matrix $A$ can be obtained by contracting every $s$ consecutive $\ell$-cross sections of $A$ uniformly for $1 \leq \ell \leq d$.

We say that $A$ contains $B$ as an interval minor if we can use repeated cross section contraction to transform $A$ into a matrix which contains $B$. Matrix $A$ avoids $B$ as an interval minor if $A$ does not contain $B$ as an interval minor. 

Equivalently, a $k_1 \times k_2 \times \cdots \times k_d$ matrix $B =  (b_{i_1, i_2, \ldots , i_d} )$ is an interval minor of a matrix $A$ if
\begin{itemize}

\item for each $i=1, \ldots , d$, there are $k_i$ disjoint intervals, $W_{i, 1}, \ldots , W_{i, k_i}$, which are sets of  consecutive positive integers,

\item and if $b_{i_1, \ldots , i_d} = 1$ then the submatrix $W_{1, i_1} \times \cdots \times W_{d, i_d}$ of $A$ contains a $1$-entry.

\end{itemize}

The containment in previous sections is generally stronger than containment as an interval minor. Indeed, $A$ contains $B$ implies that $A$ contains $B$ as an interval minor. However, since a permutation matrix has only one $1$-entry in every cross section, containment of a permutation matrix $P$ is equivalent to containment of $P$ as an interval minor.

Analogous to $f(n, P, d)$, we define $m(n, P, d)$ to be the maximum number of $1$-entries in a $d$-dimensional $n \times \cdots \times n$ zero-one matrix that avoids $P$ as an interval minor.

We observe that
\begin{equation}
\label{fm}
f(n, P, d) \leq m(n, R^{k, \ldots, k}, d) 
\end{equation}
for every $k \times \cdots \times k$ permutation matrix $P$. This follows from the fact that containment of $R^{k, \ldots, k}$ as an interval minor implies containment of $P$. Hence, we seek an upper bound on $m(n, R^{k, \ldots, k}, d)$. We denote by $f_{k_1, \ldots , k_d}(n,t,s, d)$ the maximum number of $1$-rows that have at least $s$ nonzero entries in a $d$-dimensional $t \times n \times\cdots \times n$ matrix that avoids 
$R^{k_1, \ldots , k_d}$ as an interval minor.

\begin{lem}
\label{mblock}

\begin{equation}
m(tn,R^{k, \ldots, k},d) \le s^d m(n,R^{k, \ldots, k},d)+dn t^df_{k, \ldots, k}(n,t,s, d) ,
\end{equation}

\end{lem}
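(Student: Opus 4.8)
The plan is to set up a divide-and-conquer argument on an extremal $tn \times \cdots \times tn$ matrix $A$ that avoids $R^{k,\ldots,k}$ as an interval minor, splitting its $1$-entries according to the structure of the $S$ submatrices of size $t \times \cdots \times t$. Partition $A = (a_{i_1,\ldots,i_d})$ into $n^d$ disjoint blocks $S(i_1,\ldots,i_d)$ of size $t \times \cdots \times t$ exactly as in Section 3, and let $C$ be the $n \times \cdots \times n$ contraction matrix recording which $S$ submatrices are nonzero. Since the containment of $R^{k,\ldots,k}$ as an interval minor is preserved under cross section contraction, $C$ must avoid $R^{k,\ldots,k}$ as an interval minor; otherwise, lifting the interval minor structure of $C$ back to $A$ (each $1$-entry of $C$ witnessing a nonzero $S$ submatrix, hence a $1$-entry inside the corresponding block of $A$) would produce $R^{k,\ldots,k}$ as an interval minor of $A$, a contradiction.

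Next I would classify the nonzero $S$ submatrices into two types. Call $S(i_1,\ldots,i_d)$ \emph{dense} if, for every coordinate direction $\ell = 1, \ldots, d$, each $\ell$-cross section of that $t \times \cdots \times t$ block contains fewer than $s$ nonzero entries in the appropriate lower-dimensional sense — more precisely, the relevant notion is that within the block, along each of the $d$ directions, the $1$-rows carry at most $s-1$ entries — and \emph{heavy} otherwise, i.e. there is some direction $\ell$ and some $\ell$-row of the block with at least $s$ nonzero entries. For the dense blocks: the number of nonzero $S$ submatrices is at most $m(n, R^{k,\ldots,k}, d)$ because $C$ avoids $R^{k,\ldots,k}$ as an interval minor, and each dense block has at most $s^d$ ones (at most $s$ per $1$-row in each of the $d$ directions forces a product bound $s^d$ on the block), giving the term $s^d \, m(n, R^{k,\ldots,k}, d)$.

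For the heavy blocks I would count by fixing a direction $\ell \in \{1,\ldots,d\}$ and a hyperplane index. Group the heavy blocks by which direction $\ell$ witnesses heaviness; there are $d$ choices of $\ell$. For a fixed $\ell$, slice $A$ into $n$ slabs of size $tn \times \cdots \times t \times \cdots \times tn$ (thickness $t$ in direction $\ell$), indexed by $m = 1, \ldots, n$; each such slab is a $d$-dimensional $t \times n \times \cdots \times n$ matrix after reordering coordinates, and it still avoids $R^{k,\ldots,k}$ as an interval minor. The heavy $S$ submatrices in this slab that are heavy in direction $\ell$ correspond to $1$-rows of this slab with at least $s$ nonzero entries, of which there are at most $f_{k,\ldots,k}(n,t,s,d)$ by definition; each such heavy block contains at most $t^d$ ones trivially. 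Summing over the $n$ slabs and the $d$ directions gives at most $d \, n \, t^d \, f_{k,\ldots,k}(n,t,s,d)$ ones from heavy blocks. Adding the two contributions yields the claimed inequality.

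The main obstacle I anticipate is getting the bookkeeping on the heavy/dense dichotomy to line up cleanly with the exact constants $s^d$ and $dnt^d$ — in particular, defining "dense" so that (i) a dense block genuinely has at most $s^d$ ones, and (ii) the complementary "heavy" blocks are each counted exactly once under some direction $\ell$ and slab index $m$, with the count controlled by $f_{k,\ldots,k}(n,t,s,d)$. A heavy block could be heavy in several directions simultaneously, so one must either take a union bound over the $d$ directions (which the factor $d$ in the second term accommodates) or fix a canonical witnessing direction; either way the factor $d$ absorbs the over-counting. The interval-minor monotonicity under contraction and under restriction to a slab is routine but must be invoked carefully so that both $C$ and each slab inherit avoidance of $R^{k,\ldots,k}$ as an interval minor.
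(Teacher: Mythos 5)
Your overall architecture matches the paper's: partition $A$ into $t\times\cdots\times t$ blocks, use the contraction matrix $C$ to bound the number of ``sparse'' blocks, and handle ``heavy'' blocks slab-by-slab via further contraction and the quantity $f_{k,\ldots,k}(n,t,s,d)$, with the factor $d$ absorbing the choice of direction. However, the dichotomy you actually propose is the wrong one, and the gap you flag at the end is real and not repaired by what you wrote. You define a block to be dense when, in every direction $\ell$, every $\ell$-\emph{row} (a one-dimensional line) carries at most $s-1$ ones. That condition does not force at most $s^d$ ones in the block. For instance in a $t\times t\times t$ block, placing a $1$ at $(i,j,k)$ exactly when $i+j+k\equiv 0\pmod t$ gives every $\ell$-row exactly one $1$ (so it is ``dense'' for any $s\ge 2$), yet the block has $t^2$ ones, which is unbounded in $t$ and far exceeds $s^d$. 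More generally a $(d-1)$-dimensional near-regular arrangement achieves order $t^{d-1}(s-1)$ ones while remaining ``dense'' in your sense, so the $s^d m(n,R^{k,\ldots,k},d)$ term does not bound this case.

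The correct dichotomy, which the paper uses, is in terms of nonzero $\ell$-\emph{cross sections} (hyperplanes, $(d-1)$-dimensional), not $\ell$-rows: call a block dense when, for every $\ell$, fewer than $s$ of its $\ell$-cross sections contain a $1$. Then for each $\ell$ the $\ell$-th coordinate of any $1$-entry is restricted to fewer than $s$ values, so the $1$-entries live in a subbox of size at most $(s-1)\times\cdots\times(s-1)$, giving at most $(s-1)^d<s^d$ ones. Conversely a heavy block has, for some $\ell$, at least $s$ nonzero $\ell$-cross sections; after contracting all directions $j\neq \ell$ in the containing slab by factors of $t$, that block collapses to a single $1$-row of length $t$ with at least $s$ ones, which is exactly what $f_{k,\ldots,k}(n,t,s,d)$ counts. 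Your heavy-block slab argument then goes through verbatim; it is only the dense-side definition and bound that need to be stated in terms of cross sections rather than rows.
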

\begin{proof}
Let $A$ be a $d$-dimensional $tn \times \cdots \times tn$ matrix that avoids  $R^{k, \ldots , k}$ as an interval minor and has $m(tn,R^{k, \ldots , k},d)$ $1$-entries. Partition $A$ uniformly into $S$ submatrices of size $t \times \cdots \times t$. Let $C$ be the contraction matrix of $A$ as defined in Section 3.

We do casework based on whether an $S$ submatrix of $A$ has $s$ nonzero $\ell$-cross sections for some $\ell$.

We first count the number of $1$-entries from the $S$ submatrices which do not have $s$ nonzero $\ell$-cross sections for any $\ell$. The contraction matrix $C$ has at most
$m(n,R^{k, \ldots, k},d)$ $1$-entries for, otherwise, $C$ contains $R^{k, \ldots, k}$ as an interval minor, and thus $A$ contains $R^{k, \ldots, k}$ as an interval minor as well, 
a contradiction.
Hence,  $A$ has at most $m(n,R^{k, \ldots, k},d)$ such $S$ submatrices, each of which contains at most $(s-1)^d <s^d$ $1$-entries. There are at most $s^dm(n,R^{k, \ldots, k},d)$ $1$-entries from the $S$ submatrices of this type.

We next count the number of $1$-entries from the $S$ submatrices that have $s$ nonzero $\ell$-cross sections for some $\ell$. Let $A'$ be the matrix obtained from $A$ by changing all 
the $1$-entries from the $S$ submatrices of $A$ that do not have $s$ nonzero $\ell$-cross sections to $0$-entries. Without loss of generality we let $\ell=1$.  Divide $A'$ into $n$ blocks, each of which is a $t \times tn \times \cdots \times tn$ submatrix of $A'$. For each block, contract every $t$ consecutive $j$-cross sections uniformly for 
all $j \not =1$ to get  a $t \times n \times \cdots \times n$ matrix, which has at most $f_{k, \ldots , k}(n,t,s, d)$ $1$-rows with at least $s$ nonzero entries in each $1$-row.  
Hence, in each block, there are at most $f_{k, \ldots , k}(n,t,s,d )$  nonzero $S$ submatrices. As we range over 
all $n$ blocks and as $\ell$ ranges from $1$ to $d$, we have at most $dnt^df_{k, \ldots , k}(n,t,s, d)$ ones from $S$ submatrices of this type.

Summing both cases proves Lemma \ref{mblock}.
\end{proof}


It remains to find an upper bound on $f_{k, \ldots , k}(n,t,s, d)$. We prove the following recursive inequality.
\begin{lem}
\label{f}
$f_{k_1, \ldots , k_d}(n,2t,2s, d) \le 2 f_{k_1, \ldots , k_d}(n,t,2s, d)+2f_{k_1 - 1, k_2, \ldots , k_d}(n,t,s, d)$.
\end{lem}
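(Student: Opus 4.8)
The plan is to prove the recursive inequality by splitting a $2t \times n \times \cdots \times n$ matrix $A$ (avoiding $R^{k_1,\ldots,k_d}$ as an interval minor, with the maximum number $f_{k_1,\ldots,k_d}(n,2t,2s,d)$ of $1$-rows having at least $2s$ nonzero entries) into its top half and bottom half along the first coordinate. Each half is a $t \times n \times \cdots \times n$ matrix, so I first want to account for those $1$-rows of $A$ that, within at least one of the two halves, already have $2s$ nonzero entries: by definition each half contributes at most $f_{k_1,\ldots,k_d}(n,t,2s,d)$ such $1$-rows, giving the term $2 f_{k_1,\ldots,k_d}(n,t,2s,d)$. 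The remaining $1$-rows of $A$ with at least $2s$ nonzero entries must have between $s$ and $2s-1$ nonzero entries in \emph{each} half (since together the two halves give at least $2s$, and neither reaches $2s$, so each has at least $2s - (2s-1) = 1$; more carefully, if one half had fewer than $s$ the other would have more than $s$ but we only get the clean bound by a different pairing — I should instead say each half has at least one, and then handle the count so that each half has at least $s$ after possibly swapping which rows we assign). Let me restate: among these rows, in at least one of the two halves there are at least $s$ nonzero entries; so each such row can be "charged" to a half in which it has $\geq s$ (and $< 2s$) nonzero entries, and I bound, for each half, the number of $1$-rows charged to it.

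The key step is then to bound, for a single $t \times n \times \cdots \times n$ half $B$, the number of $1$-rows (in the first-coordinate direction) with at least $s$ but fewer than $2s$ nonzero entries, by $f_{k_1-1,k_2,\ldots,k_d}(n,t,s,d)$. Here is where the drop from $k_1$ to $k_1-1$ comes in. For each such row, split it in half along the first coordinate too — wait, no: the $1$-rows we are counting run in the first-coordinate direction and $B$ already has only $t$ layers in that direction. I think the intended move is different: contract the first coordinate. Take $B$ and contract all $t$ of its $1$-cross sections into a single cross section; call the result $B'$, a $1 \times n \times \cdots \times n$ matrix, i.e. essentially a $(d-1)$-dimensional object, but keep it as $d$-dimensional with first dimension $1$. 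Hmm, but then "$1$-rows with $s$ nonzero entries" becomes "columns of $B'$ that are nonzero", which is not obviously governed by $f_{k_1-1,\ldots}$. Let me reconsider: the honest approach is that a $1$-row of $B$ with $\geq s$ nonzero entries, since $B$ avoids $R^{k_1,\ldots,k_d}$ as an interval minor, cannot have too many such rows "stacked" — more precisely, I suspect the argument partitions the $t$ layers into two halves of $t/2$ layers (assuming $t$ is a power of two, which is implicit in the doubling recursion) and argues that if the number of heavy rows exceeds $f_{k_1-1,k_2,\ldots,k_d}(n,t,s,d)$, then one can find the interval structure for $R^{k_1,\ldots,k_d}$: the first $k_1-1$ row-intervals come from an interval minor of $R^{k_1-1,k_2,\ldots,k_d}$ witnessed in one half's worth of heavy rows, and the last row-interval $W_{1,k_1}$ is supplied by the heavy row itself together with the $s \geq 1$ nonzero entries distributed among $k_2 \cdots k_d$ possible column-interval products (here $s$ versus $k_2\cdots k_d$ needs $s$ large enough, which is why $s$ appears and not $1$). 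The main obstacle is getting this interval-minor surgery exactly right: one must produce $k_1$ disjoint intervals $W_{1,1},\ldots,W_{1,k_1}$ in the first coordinate and $k_i$ disjoint intervals in each other coordinate simultaneously, such that every product box required by $R^{k_1,\ldots,k_d}$ contains a $1$; the careful bookkeeping of which heavy rows and which nonzero entries supply which intervals, and why $f_{k_1-1,k_2,\ldots,k_d}(n,t,s,d)$ is the right threshold, is the crux.

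Concretely, then, the proof I would write goes: let $A$ be extremal for $f_{k_1,\ldots,k_d}(n,2t,2s,d)$; write $A = \binom{A_1}{A_2}$ with $A_1, A_2$ the $t \times n \times \cdots \times n$ sub-blocks along the first axis. Classify each $1$-row $\rho$ of $A$ with $\geq 2s$ nonzero entries: either (i) $\rho \cap A_i$ has $\geq 2s$ nonzero entries for some $i$, or (ii) $\rho \cap A_i$ has $\geq s$ nonzero entries for some $i$ while $\rho\cap A_j$ has $< 2s$ for all $j$ — every such $\rho$ falls into (i) or (ii) because the total is $\geq 2s$. Type (i) rows number at most $2f_{k_1,\ldots,k_d}(n,t,2s,d)$, since $A_1$ and $A_2$ each avoid $R^{k_1,\ldots,k_d}$ as an interval minor. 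For type (ii), assign $\rho$ to an index $i$ with $s \le |\rho\cap A_i| < 2s$; I claim that for each fixed $i$, the number of rows assigned to $i$ is at most $f_{k_1-1,k_2,\ldots,k_d}(n,t,s,d)$. If not, restrict $A_i$ to those heavy rows and to $s$ nonzero entries in each; the resulting matrix, viewed appropriately, would contain $R^{k_1-1,k_2,\ldots,k_d}$ as an interval minor in "most" of itself, and combining that minor structure with one additional heavy row yields $R^{k_1,\ldots,k_d}$ as an interval minor of $A_i \subseteq A$, contradicting the hypothesis on $A$. Summing, $f_{k_1,\ldots,k_d}(n,2t,2s,d) \le 2f_{k_1,\ldots,k_d}(n,t,2s,d) + 2f_{k_1-1,k_2,\ldots,k_d}(n,t,s,d)$, as desired. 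I would flag that the technical heart — and the step most likely to need care with the definitions of interval minor and the interval intervals $W_{i,j}$ — is the type (ii) sub-claim, particularly verifying that exceeding the threshold $f_{k_1-1,\ldots}(n,t,s,d)$ genuinely forces the full $R^{k_1,\ldots,k_d}$ interval minor rather than just $R^{k_1-1,\ldots}$.
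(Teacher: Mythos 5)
Your decomposition of $A$ into two $t \times n \times \cdots \times n$ halves along the first coordinate matches the paper's, and your bound on what you call type (i) rows (those with $\ge 2s$ ones in one half) is fine. But the step you yourself flag as the crux --- the type (ii) bound by $f_{k_1-1,k_2,\ldots,k_d}(n,t,s,d)$ --- has a genuine gap, and your two candidate mechanisms for closing it are both wrong. You suggest either that ``one additional heavy row'' supplies the extra row-interval $W_{1,k_1}$, or that the $s$ nonzero entries of a heavy row are ``distributed among $k_2\cdots k_d$ possible column-interval products.'' Neither works: a $1$-row is parametrized by a fixed $(x_2,\ldots,x_d)$, so all $s$ of its $1$-entries lie in a single box $W_{2,j_2}\times\cdots\times W_{d,j_d}$ (the one containing $(x_2,\ldots,x_d)$), and one heavy row can therefore hit only one of the required $k_2\cdots k_d$ boxes for $W_{1,k_1}$, not all of them. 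There is also no ``$s$ large relative to $k_2\cdots k_d$'' requirement hidden here, contrary to your aside.

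The paper's mechanism is different and uses an ingredient you never invoke: for the type 2 rows one keeps, there is at least one $1$-entry in the \emph{other} half $A_{3-i}$. Concretely, zero out all rows except the type 2 rows with $\ge s$ ones in $A_i$, then contract all $t$ layers of $A_{3-i}$ into a single $1$-cross section to get a $(t+1)\times n \times\cdots\times n$ matrix $A'$. The contraction guarantees that every surviving row of $A'$ has a $1$ in that last cross section. Now if the first $t$ layers of $A'$ contained $R^{k_1-1,k_2,\ldots,k_d}$ as an interval minor with first-coordinate intervals $W_{1,1},\ldots,W_{1,k_1-1}$, set $W_{1,k_1}$ to be the contracted last layer. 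For any required box $W_{2,j_2}\times\cdots\times W_{d,j_d}$, the $R^{k_1-1,\ldots}$ structure provides a $1$-entry at some $(x_1,x_2,\ldots,x_d)$ inside $W_{1,1}\times W_{2,j_2}\times\cdots\times W_{d,j_d}$; the row $(\cdot,x_2,\ldots,x_d)$ is a surviving row, so it has a $1$ in $W_{1,k_1}\times W_{2,j_2}\times\cdots\times W_{d,j_d}$ as well. Thus $A$ would contain $R^{k_1,\ldots,k_d}$ as an interval minor, a contradiction. So the first $t$ layers of $A'$ avoid $R^{k_1-1,k_2,\ldots,k_d}$ as an interval minor, each surviving row has $\ge s$ ones there, and the count is bounded by $f_{k_1-1,k_2,\ldots,k_d}(n,t,s,d)$; symmetrizing over $i$ gives the factor $2$. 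The load-bearing facts are (a) every type 2 row has a $1$ in the other half and (b) the extra interval $W_{1,k_1}$ is the contracted other half, not anything internal to $A_i$; your writeup has neither.
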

\begin{proof}
Let $A$ be a $d$-dimensional $2t \times n \times \cdots \times n$ matrix that avoids $R^{k_1, \cdots, k_d}$ as an interval minor and has $f_{k_1, \ldots , k_d}(n,2t,2s, d)$ $1$-rows, each of which has at least $2s$ ones.

The first type of these $1$-rows have all their $1$-entries among their first $t$ or last $t$ entries. There are clearly at most $2 f_{k_1, \ldots , k_d}(n,t,2s, d)$ such $1$-rows in $A$.

The other type of these $1$-rows must have at least one $1$-entry among  both the first $t$ and the last $t$ entries. Since each $1$-row in question has at least $2s$ ones, 
there are at least $s$ ones among either the first or last $t$ entries. Without loss of generality, we consider those $1$-rows in which the first $t$ entries contain at least $s$ $1$-entries. 
Let $A'$ be the matrix obtained from $A$ by changing all $1$-entries to $0$-entries in all other $1$-rows and then contracting the last $t$ $1$-cross sections. Hence, the last entry 
in each nonzero $1$-row of $A'$ is a $1$-entry.  The first $t$ $1$-cross sections of $A'$ must avoid $R^{k_1-1, \ldots , k_d}$ as an interval minor for, otherwise, $A'$ 
contains $R^{k_1, \ldots , k_d}$ as an interval minor and so does $A$, a contradiction. Thus, there are at most $2f_{k_1-1, k_2, \ldots , k_d}(n,t,s, d)$ $1$-rows in which 
both the first $t$ and last $t$ entries include at least one $1$-entry.

Adding up both cases gives the result.
\end{proof}

The recursive inequality of Lemma \ref{f} allows us to get an upper bound on $f_{l, k, \ldots , k}(n,t,s,d)$. 
\begin{lem}
\label{closed}
If $s$, $t$ are powers of $2$ and $2^{\ell -1} \le s \le t$, then 
$f_{\ell ,k, \ldots, k}(n,t,s, d) \leq {2^{\ell -1}t^2 \over s} \ m(n, R^{k, \ldots, k}, d-1)$.
\end{lem}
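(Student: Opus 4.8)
The plan is to prove the bound by induction on $\ell$, feeding the recursive inequality of Lemma \ref{f} into itself roughly $\log_2(t/s)$ times and using a one-dimension-lower interval-minor bound at the base. First I would handle the base case $\ell = 1$: a $1$-row that avoids $R^{1,k,\ldots,k}$ as an interval minor and still has a nonzero entry is impossible once $R^{1,k,\ldots,k}$ fits, so in fact $f_{1,k,\ldots,k}(n,t,s,d)$ counts $1$-rows with at least $s$ ones in a $t\times n\times\cdots\times n$ matrix avoiding $R^{1,k,\ldots,k}$ as an interval minor; projecting out the first coordinate (which has size $t$ but only one interval is needed for $k_1=1$) reduces this to counting $1$-entries in a $(d-1)$-dimensional $n\times\cdots\times n$ matrix avoiding $R^{k,\ldots,k}$ as an interval minor, i.e. it is at most $m(n,R^{k,\ldots,k},d-1)$, and since every counted $1$-row needs $\geq s$ ones, multiplying by the crude factor $t^2/s \geq 1$ gives the claimed $2^{0}t^2/s\cdot m(n,R^{k,\ldots,k},d-1)$ with room to spare. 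I would state this carefully because the quantifier ``$2^{\ell-1}\le s\le t$'' with $\ell=1$ just says $1\le s\le t$.

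Next, for the inductive step I assume the bound holds for $\ell - 1$ and all admissible $s,t$, and I want it for $\ell$. The idea is to iterate Lemma \ref{f} in the variable $t$. Applying Lemma \ref{f} with parameters $(\ell,k,\ldots,k)$ and halving the first size repeatedly, I get
\begin{equation}
f_{\ell,k,\ldots,k}(n,t,s,d) \leq 2\, f_{\ell,k,\ldots,k}(n,t/2,s,d) + 2\, f_{\ell-1,k,\ldots,k}(n,t/2,s/2,d),
\label{eq:iterplan}
\end{equation}
where I have renamed so that $s$ plays the role of ``$2s$'' and $t$ of ``$2t$''. Unrolling \eqref{eq:iterplan} down to first-coordinate size $s$ (which takes $\log_2(t/s)$ steps) produces a sum $\sum_{j} 2^{j+1} f_{\ell-1,k,\ldots,k}(n, t/2^{j+1}, s/2^{j+1}, d)$ plus a leading term $2^{\log_2(t/s)} f_{\ell,k,\ldots,k}(n,s,s,d)$; for the leading term, when the first coordinate has size exactly $s$ a $1$-row with $s$ ones is ``full,'' so $f_{\ell,k,\ldots,k}(n,s,s,d)$ is again controlled by an $(d-1)$-dimensional interval-minor count, and in any case is at most $f_{\ell-1,k,\ldots,k}(n,s,s,d)$ up to constants I can absorb. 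Each remaining summand is estimated by the inductive hypothesis at level $\ell-1$, which contributes $\frac{2^{\ell-2}(t/2^{j+1})^2}{s/2^{j+1}} m(n,R^{k,\ldots,k},d-1) = 2^{\ell-2}\frac{t^2}{s}2^{-(j+1)}m(n,R^{k,\ldots,k},d-1)$. Multiplying by the $2^{j+1}$ in front, every term contributes $2^{\ell-2}\frac{t^2}{s}m(n,R^{k,\ldots,k},d-1)$ — note the geometric decay in $j$ from the hypothesis is exactly cancelled by the $2^{j+1}$ branching factor, which is why the final bound has a clean factor $t^2/s$ rather than $t^2$ or $t$.

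The main obstacle — and the step I would be most careful about — is the bookkeeping of this cancellation: since each level of the recursion in \eqref{eq:iterplan} doubles the number of $f_{\ell-1}$-terms while the inductive hypothesis only saves a factor of $2$ per level, the sum over $j$ is a sum of $\log_2(t/s)$ equal-sized terms, so naively one gets an extra $\log_2(t/s)$ factor. To kill it I must be slightly sharper: either observe that $s,t$ are powers of $2$ and that the recursion in Lemma \ref{f} actually reduces $k_1$ each time the ``both halves nonzero'' branch is taken, so after at most $\ell - 1$ such branches we hit $k_1 = 1$ and the base case terminates the recursion — meaning the true recursion tree has depth at most $\ell - 1$ in the ``$f_{\ell-1}$'' direction and depth $\log_2(t/s)$ only in the ``same-$\ell$'' direction, so the number of surviving $(d-1)$-dimensional leaves is polynomial in $t/s$ with the powers arranged to give exactly $2^{\ell-1}t^2/s$. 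I would therefore restructure the induction as a double induction on $(\ell, \log_2(t/s))$, using Lemma \ref{f} to decrease $\log_2(t/s)$ by one (first term, same $\ell$) and to decrease $\ell$ by one (second term), and check that the claimed closed form $\frac{2^{\ell-1}t^2}{s}m(n,R^{k,\ldots,k},d-1)$ satisfies the resulting two-term recurrence with the stated side conditions $2^{\ell-1}\le s\le t$ — that verification is a short algebraic check once the recurrence is set up correctly, and it is where the constants $2^{\ell-1}$ and the exponent $2$ on $t$ get pinned down.
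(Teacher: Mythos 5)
Your proposal is essentially the same as the paper's proof: both use an outer induction on $\ell$ with the base case $\ell=1$ handled by contracting away the first coordinate to get a $(d-1)$-dimensional matrix avoiding $R^{k,\ldots,k}$ as an interval minor, and both then run an inner induction on $t$ (equivalently, on $\log_2(t/s)$ with $s$ fixed) driven by the two-term recurrence of Lemma~\ref{f}, checking by direct algebra that $2^{\ell-1}t^2/s$ is a fixed point. One small remark on your middle paragraph: the worry about an extra $\log_2(t/s)$ factor comes from a slip in the unrolling — when you halve $t$ repeatedly at the same level $\ell$, the $s$-parameter in the spawned $f_{\ell-1}$ terms is always $s/2$, not $s/2^{j+1}$, so the inductive estimate $\frac{2^{\ell-2}(t/2^{j+1})^2}{s/2}$ gives summands that decay like $2^{-j}$ even after multiplying by the $2^{j+1}$ branching factor, and the unrolled sum is already geometric with no logarithmic loss; the double-induction formulation you settle on is nonetheless correct and is exactly what the paper does.
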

\begin{proof}
We induct on $\ell$. For $\ell =1$, we show that $f_{1,k, \ldots, k}(n,t,s, d) \le m(n,R^{k, \ldots, k},d-1)$. Suppose on the
contrary $f_{1,k, \ldots, k}(n,t,s, d) > m(n,R^{k, \ldots, k},d-1)$. Then there is a $t \times n \times \cdots \times n$ matrix $A$ which avoids $R^{1,k,\ldots,k}$
as an interval minor and has more than $m(n,R^{k, \ldots, k},d-1)$ $1$-rows with at least $s$ $1$-entries in each $1$-row. Let $B$ be
the $1 \times n \times \cdots \times n$ matrix obtained from $A$ by contracting all the $1$-cross sections. Then $B$, which can be viewed as a $(d-1)$-dimensional matrix, has over $m(n,R^{k, \ldots, k},d-1)$ $1$-entries 
and thus contains the $(d-1)$-dimensional matrix $R^{k, \ldots, k}$ as an interval minor. Consequently, $A$ contains the $d$-dimensional $R^{1, k, \ldots, k}$ as
an interval minor, a contradiction. Therefore, $$f_{1,k, \ldots, k}(n,t,s, d) \le m(n,R^{k, \ldots, k},d-1)  \leq {2^{1-1}t^2 \over s} \ m(n, R^{k, \ldots, k}, d - 1) \ , $$ 
which proves the base case.

Assuming that for all $s$ and $t$ that are powers of $2$ satisfying $2^{\ell-2} \le s \le t$ we have 
\begin{equation}
\label{f_i}
f_{\ell-1,k, \ldots, k}(n,t,s, d) \leq {2^{\ell-2}t^2 \over s}\ m(n, R^{k, \ldots, k}, d - 1)
\end{equation}
 for some $\ell \geq 2$, we need to show that 
\begin{equation}
\label{f_}
f_{\ell ,k, \ldots, k}(n,t,s, d) \leq {2^{\ell -1}t^2 \over s} \ m(n, R^{k, \ldots, k}, d - 1) 
\end{equation}
for all $s$ and $t$ that are powers of $2$ satisfying $2^{\ell -1} \le s \le t$. 

We use another induction on $t$ to show that (\ref{f_}) is true for all $t \geq s$ that are powers of $2$. The base case of $t=s$ is trivial.  If $f_{\ell,k, \ldots, k}(n,t,s, d) 
\leq {2^{\ell-1}t^2 \over s} m(n, R^{k, \ldots, k}, d - 1) $ for some $t \geq s$ that is a power of $2$, we prove the same inequality for $2t$.
By Lemma \ref{f}, we have 
\begin{eqnarray*}
f_{\ell,k, \ldots, k}(n,2t,s, d) &\le& 2f_{\ell,k, \ldots, k}(n,t,s, d)+2f_{\ell-1,k, \ldots, k}(n,t,s/2, d) \\
&\le & 2{2^{\ell-1}t^2 \over s} m(n, R^{k, \ldots, k}, d - 1) +2{2^{\ell-2}t^2 \over s/2} m(n, R^{k, \ldots, k}, d - 1)  \\ 
&=& {2^{\ell-1}(2t)^2 \over s} m(n, R^{k, \ldots, k}, d - 1) \ ,
\end{eqnarray*}
where we use the two inductive assumptions in the second inequality. 
Thus our induction on $t$ is complete and (\ref{f_}) is proved. As a result, our induction on $l$ is also complete.
\end{proof}

We are now ready to prove Theorem \ref{IS}.
\begin{proof}[Proof of Theorem \ref{IS}]
We first bound the right hand side of inequality (\ref{fm}). 
We claim that
\begin{equation}
\label{mO}
{m(n,R^{k, \ldots, k},d) \over n^{d -1}} =2^{O(k)} \ .
\end{equation}

The base case of $d=1$ is trivial. Assuming that (\ref{mO}) is true for $(d-1)$, we
combine Lemmas \ref{mblock} and \ref{closed} to get
$$m(tn,R^{k, \ldots , k},d) \le s^dm(n,R^{k, \ldots, k}, d)+dt^d{2^{k-1}t^2 \over s}2^{O(k)}n^{d-1} .$$
Choosing $t=2^{dk }$ and $s=2^{k-1}$ yields
$$m(2^{dk}n,R^{k, \ldots , k},d) \le 2^{(k-1)d}m(n,R^{k, \ldots, k},d)+d  2^{kd(d + 2)} 2^{O(k)}n^{d-1} .$$
In particular, if $n$ is a positive integer power of $2^{dk}$, iterating this inequality yields
\begin{eqnarray*}
\lefteqn{m((2^{dk})^L, R^{k, \ldots, k}, d)} \\
& \leq& 2^{(k-1)d} m((2^{dk})^{L - 1}, R^{k, \ldots, k}, d)  
 + d 2^{kd(d+2)} 2^{O(k)} (2^{dk})^{(L-1)(d-1)} \\
&\leq & 2^{2(k-1)d}  m((2^{dk})^{L - 2}, R^{k, \ldots, k}, d)  
 + \  d 2^{kd(d+2)} 2^{O(k)} \left( 1 + {1 \over 2^{d(dk - 2k +1)}} \right) (2^{dk})^{(L-1)(d-1)} \\
&\leq& 2^{L(k-1)d}  m(1, R^{k, \ldots, k}, d)  
 + \ d 2^{kd(d+2)} 2^{O(k)} \left(  1 + 
{1 \over 2^{d(dk - 2k +1)}} + {1 \over 2^{2d(dk - 2k +1)}} + \cdots \right) (2^{dk})^{(L-1)(d-1)} \\
&=& 2^{O(k)} (2^{dk})^{(L-1)(d-1)} \ .
\end{eqnarray*}
Hence, if $(2^{kd})^{L-1} \leq n < (2^{kd})^L$, then
$$m(n, R^{k, \ldots, k}, d) \leq m((2^{dk})^L, R^{k, \ldots, k}, d) = 2^{O(k)} (2^{dk})^{(L-1)(d-1)} \leq 2^{O(k)} n^{d-1} \ .$$
This completes the induction on $d$, and hence (\ref{mO}) is proved.

It follows from (\ref{fm}) and (\ref{mO}) that Theorem \ref{IS} is true for every permutation matrix $P$. By the remark at the end of Section 3, this result can be extended to tuple permutation matrices. The proof of Theorem \ref{IS} is completed.
\end{proof}

\subsection{
Lower bounds and super-homogeneity}

We first use Cibulka's method in \cite{C} to show that $I(P, d) \geq d(k-1)$ for all permutation matrices of size $k \times \cdots \times k$ and extend this  lower bound to tuple permutation matrices. 
\begin{theo}
\label{llb}
If $P$ is a $d$-dimensional $k \times \cdots \times k$ permutation matrix or a tuple permutation matrix generated by such a permutation matrix, then $I(P, d) \geq d(k-1)$. Furthermore, if $P$ is the identity matrix, then $I(P, d) = S(P, d) = d(k-1)$.
\end{theo}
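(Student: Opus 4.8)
The plan is to separate the two claims. For the lower bound $I(P,d)\ge d(k-1)$, I would adapt Cibulka's counting argument from the $d=2$ case. The key idea is a construction: build a $d$-dimensional $n\times\cdots\times n$ zero-one matrix $A$ that avoids $P$ but has roughly $d(k-1)n^{d-1}$ ones. A natural candidate is to take $A$ to be nonzero only on the ``boundary layers'' in each direction — that is, put ones on all entries $a_{i_1,\dots,i_d}$ for which at least one coordinate $i_\ell$ lies in $\{1,\dots,k-1\}$ or symmetrically near the ends, chosen carefully so that no $k$ disjoint ``intervals'' in any one coordinate direction can each meet the pattern. More precisely, since $P$ is a $k\times\cdots\times k$ permutation matrix (or tuple permutation matrix generated by one), containing $P$ requires $k$ distinct values in each coordinate that are ``spread out'' in the sense of avoiding forced collisions; by concentrating the ones in at most $d(k-1)$ cross sections total, one blocks this. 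I would verify the count gives $|A|\ge (d(k-1)-o(1))n^{d-1}$, and then invoke the definition of $\liminf$ to conclude $I(P,d)\ge d(k-1)$. For the tuple permutation case, I would use Lemma \ref{2j} together with the observation that a tuple permutation matrix contains the generating permutation matrix $P$, so $f(n,P\otimes R,d)\ge f(n,P,d)$ and the same lower bound passes through (the extra ones in $R$ only make $P\otimes R$ harder to contain, hence easier to avoid).

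For the equality claim when $P$ is the identity matrix $I_k$ (the $k\times\cdots\times k$ permutation matrix with ones exactly at $(i,i,\dots,i)$), I need the matching upper bound $S(I_k,d)\le d(k-1)$, i.e.\ $f(n,I_k,d)\le (d(k-1)+o(1))n^{d-1}$. Here I would use a direct extremal argument rather than interval minors. The standard approach: if a $d$-dimensional $n\times\cdots\times n$ matrix $A$ avoids $I_k$, then consider, for each $1$-entry, the longest increasing chain (in all coordinates simultaneously) of nonzero entries ending there; avoiding $I_k$ means no such chain has length $k$, so we can color the nonzero entries with $k-1$ colors (by chain length) such that no color class contains an increasing chain of length $2$. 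Then one bounds the size of each color class — an antichain-type object — by roughly $d\cdot n^{d-1}$ via a projection/slicing argument, wait, this overcounts; more carefully one argues each color class, being free of monotone pairs, has at most $(1+o(1))\,n^{d-1}$ ones per ``direction'' contribution, so summing over $k-1$ colors gives the bound. Actually the cleanest route is induction on $d$ combined with the $d=2$ result $f(n,I_k,2)\le (2(k-1)+o(1))n$ of Cibulka, contracting one coordinate at a time.

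The main obstacle I anticipate is the upper bound for the identity matrix: getting the constant exactly $d(k-1)$ rather than something larger like $dk$ or $2d(k-1)$ requires a tight analysis of how many ones an $I_2$-avoiding (monotone-pair-free) $d$-dimensional matrix can have, and ensuring the error terms are genuinely $o(n^{d-1})$ so they vanish in the $\limsup$. I would likely handle this by a careful induction: peeling off one coordinate reduces an $I_k$-avoiding $d$-dimensional matrix to $I_j$-avoiding $(d-1)$-dimensional slices for various $j\le k$, and then a convexity/summation argument over the slices, mirroring the structure of Lemma \ref{Ine} but tracking constants. The lower-bound construction, by contrast, should be routine once the right ``union of boundary cross sections'' matrix is written down and checked against the definition of pattern containment.
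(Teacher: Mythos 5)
Your proposal has the right shape, but the lower bound construction as you state it actually fails, and fixing it is the one real subtlety in the paper's proof.

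You suggest putting ones on all entries with some coordinate $i_\ell \in \{1,\dots,k-1\}$ (with a vague ``or symmetrically near the ends, chosen carefully''). Concretely, take $d=2$, $k=2$, and let $P$ be the anti-identity $\begin{pmatrix} 0 & 1 \\ 1 & 0 \end{pmatrix}$. The set $\{(i,j): i=1 \text{ or } j=1\}$ contains both $(1,2)$ and $(2,1)$, which form a copy of $P$. So the unanchored ``small-coordinate shell'' does not avoid every $k\times\cdots\times k$ permutation matrix. The paper resolves this by fixing one nonzero entry $p_{i_1,\ldots,i_d}$ of $P$ and taking $a_{j_1,\ldots,j_d}=0$ exactly when $i_\ell \le j_\ell \le n-k+i_\ell$ for all $\ell$; i.e., in direction $\ell$ the shell consists of the first $i_\ell-1$ and last $k-i_\ell$ cross sections, so the orientation of the shell is dictated by the position of the chosen entry of $P$. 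With that anchoring the avoidance argument goes through (if $P$ were contained, the representative of the chosen entry would need $i_\ell - 1$ smaller and $k-i_\ell$ larger entries in some coordinate where that is impossible). The count $n^d - (n-k+1)^d$ then has leading term $d(k-1)n^{d-1}$. Your phrase ``chosen carefully'' gestures at this, but you never say what governs the choice, and without the $P$-dependent anchoring the construction is wrong. Your reduction from tuple permutation matrices via $f(n,P,d)\le f(n,P\otimes R,d)$ is the same as the paper's and is fine.

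For the identity upper bound, your chain-decomposition idea is a legitimate alternative to the paper's argument and in fact is the same argument in different clothes. The paper partitions $\{1,\dots,n\}^d$ into diagonals parallel to the main diagonal and notes each diagonal carries at most $k-1$ ones; summing the caps $\min(L_D,k-1)$ over diagonal lengths $L_D$ gives exactly $n^d-(n-k+1)^d$. Your version colours the ones by longest increasing chain ending there, getting $k-1$ antichains, and bounds the maximum antichain by the number of diagonals $n^d-(n-1)^d$ (Dilworth, with the diagonals as the chain cover). That yields $(k-1)\bigl(n^d-(n-1)^d\bigr)$, which is slightly weaker than the paper's exact $n^d-(n-k+1)^d$ but has the same leading term $d(k-1)n^{d-1}$, so it suffices for $S(I_k,d)\le d(k-1)$. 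You should commit to this and drop the ``projection/slicing'' detour and the suggestion to induct on $d$ via the two-dimensional Cibulka bound, which you do not make precise and which is not needed. In short: fix the anchoring of the shell to a chosen $1$-entry of $P$ and state the antichain bound cleanly, and your outline matches the paper.
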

\begin{proof} We first show that, for all $n\ge k-1$, we have
\begin{equation}
\label{lb}
f(n, P, d) \geq n^d - (n - k +1)^d
\end{equation}
for every permutation matrix $P$. 
Pick one nonzero entry $p_{i_1, \ldots , i_d}=1$ of $P$. Construct a $d$-dimensional $n \times \cdots \times n$ matrix $A$ with entries such that $a_{j_1, \ldots , j_d}=0$ if $i_l \le j_l \le n-k+i_l$ for all $1 \leq l \leq d$ and $a_{j_1, \ldots , j_d}=1$ otherwise. We first show that $A$ avoids $P$. Suppose to the contrary that $A$ contains $P$. Let the special nonzero entry 
$p_{i_1, \ldots, i_d}=1$ of $P$ be represented by entry $a_{y_1, \ldots, y_d}$ of $A$.  By the construction of $A$, we must have 
either $y_l\le i_l-1$ or $y_l \ge n-k+i_l+1$. If $y_l \le i_l -1$, since $A$ contains $P$,
$A$ has $i_l -1$ other nonzero entries whose $l^{\text{th}}$ coordinates are smaller than $y_l \leq i_l -1$ to represent $1$-entries of $P$, an impossibility. If $y_l \geq n - k + i_l +1$, a similar argument leads to another impossibility. Counting the number of $1$-entries in $A$ proves (\ref{lb}).

We next show that 
\begin{equation}
\label{ub}
f(n,P,d) \le n^d-(n-k+1)^d
\end{equation}
when $P$ is the identity matrix, i.e.,   $p_{i_1, \ldots, i_d}$ is one on the main diagonal $i_1=\cdots=i_d$ and zero otherwise. 
If $A$ is a matrix that avoids $P$,  each diagonal of $A$, which is parallel to the main diagonal, has at most $k-1$ nonzero entries. Summing over the maximum numbers of $1$-entries in all diagonals
proves (\ref{ub}). 

The second part of Theorem \ref{llb} follows immediately from (\ref{lb}) and (\ref{ub}). The first part is obvious for a permutation matrix $P$ because of (\ref{lb}). 
The first part is also true 
for a tuple permutation matrix $P'$ since $f(n, P, d) \leq f(n, P', d)$ if $P'$ is generated by a permutation matrix $P$.
\end{proof}

The lower bound given in Theorem \ref{llb} is linear in $k$. One may ask how large a lower bound on $I(P,d)$ can be for some $P$. In the the rest of this section, we extend Fox's idea for the $d=2$ case \cite{Fox, Fox2} to show that a lower bound can be as large as an exponential function in $k$ in multiple dimensions. The crucial part in our approach is our observation that $f(n,P,d)$ is super-homogeneous.

\begin{theo}
\label{lower}
For each large $k$, there exists a family of $d$-dimensional $k\times \cdots \times k$ permutation matrices $P$ such that 
$I(P,d)=2^{\Omega(k^{1 / d})}$.
\end{theo}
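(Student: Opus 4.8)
The plan is to lift Fox's two-dimensional construction \cite{Fox2} to $d$ dimensions and then cash it in through super-homogeneity. Recall that Fox exhibits, for each large $m$, a $2$-dimensional $m \times m$ permutation matrix $Q_m$ together with a matrix avoiding $Q_m$ whose density forces $f(N, Q_m, 2) / N = 2^{\Omega(\sqrt{m})}$ for suitable $N$. I would first build a $d$-dimensional analogue by taking an appropriate Kronecker-type or ``diagonal product'' combination of Fox's planar patterns: set $k \approx m^{\,?}$ so that the resulting permutation matrix $P$ lives in a $k \times \cdots \times k$ box, arranging the $d$ coordinates so that each pair of coordinate directions sees (a copy of) the hard planar pattern. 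The natural choice makes $P$ essentially the ``tensor power'' of $Q_m$ of degree roughly $d$, which has side length $k = m^{\Theta(1)}$, so that $m = \Theta(k^{1/d})$ and the planar lower bound $2^{\Omega(\sqrt m)} = 2^{\Omega(k^{1/(2d)})}$ would already be in the right ballpark — but to hit the stated exponent $2^{\Omega(k^{1/d})}$ one must be more economical, using that in $d$ dimensions one only needs the side length to grow like $m$ itself (not $m^2$) while the avoidance count is boosted across $d$ independent directions.

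Concretely, the key steps in order: (1) Fix the hard planar pattern $Q_m$ from Fox and a planar witness matrix $B_m$ of side $n_0$ with $|B_m| \ge 2^{c\sqrt m}\, n_0$ that avoids $Q_m$. (2) Define the $d$-dimensional permutation matrix $P$ on a $k \times \cdots \times k$ grid by placing its ones so that the projection of $P$ onto every pair of coordinates contains $Q_m$ as a submatrix; this forces $k = \Theta(m)$ if we are careful, giving $m = \Theta(k)$ — wait, that over-shoots — so instead nest $d$ copies in a ``staircase of staircases'' so that $k$ grows like $m^{d}$? The resolution (and this is the crux I will argue carefully) is that one should build $P$ as a $d$-fold self-composition whose side length is $k = \Theta(m^d)$, NO: the correct bookkeeping, following the structure of Fox's recursion, yields $k = \Theta(m)$ per layer compounded $d$ times multiplicatively only in the exponent of the bound, not in $k$. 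I will therefore set up $P$ as a block-recursive permutation matrix with $d$ levels of recursion, side length $k = \Theta(m)$, and show by the same entropy/counting argument Fox uses that there is a $d$-dimensional matrix $A_0$ of side $n_0$ with $|A_0| \ge 2^{\Omega(m)} n_0^{\,d-1}$ avoiding $P$; here $m = \Theta(k^{1/d})$ once the $d$ recursion levels are accounted for, so $2^{\Omega(m)} = 2^{\Omega(k^{1/d})}$.

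Step (3): invoke Theorem \ref{main} (equivalently that $f(n,P,d) = \Theta(n^{d-1})$ when $P$ is a tuple permutation matrix, and the permutation-matrix case of Klazar--Marcus) to know the sequence $\{ f(n,P,d)/n^{d-1}\}$ is bounded, so $I(P,d)$ is a finite number we are lower-bounding. Step (4): use super-homogeneity, $f(sn, P, d) \ge K s^{d-1} f(n,P,d)$, with $n = n_0$ fixed and $s \to \infty$: this gives $f(s n_0, P, d) \ge K s^{d-1} f(n_0, P, d) \ge K s^{d-1} |A_0| \ge K s^{d-1} \cdot 2^{\Omega(k^{1/d})} n_0^{\,d-1}$, hence $f(s n_0, P, d)/(s n_0)^{d-1} \ge (K/n_0^{d-1}) \cdot 2^{\Omega(k^{1/d})} \cdot n_0^{d-1}/n_0^{d-1}$, i.e. a constant times $2^{\Omega(k^{1/d})}$, uniformly in $s$; taking $\liminf_{n\to\infty}$ along the subsequence $n = s n_0$ (and noting $\liminf$ over all $n$ is no larger) yields $I(P,d) = 2^{\Omega(k^{1/d})}$.

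The main obstacle is Step (2): constructing the right $d$-dimensional hard permutation matrix and verifying that a dense avoider exists with the claimed $2^{\Omega(k^{1/d})}$ factor — this is where Fox's planar argument must be genuinely extended, controlling how the ``cost'' of avoidance multiplies across $d$ mutually generic coordinate directions while the side length $k$ is kept as small as $\Theta(m^d)$ (so that $m = \Theta(k^{1/d})$). Everything after that — boundedness from Theorem \ref{main}, and the clean transfer via super-homogeneity — is routine. I would therefore spend the bulk of the write-up on the recursive block construction of $P$ and the entropy-counting lower bound for $|A_0|$, then dispatch the limit-inferior conclusion in a short paragraph using $f(sn,P,d) \ge K s^{d-1} f(n,P,d)$.
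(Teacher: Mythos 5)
Your overall skeleton matches the paper's: construct a dense matrix avoiding a hard pattern, lower-bound $f$ at one scale, then use the super-homogeneity $f(sn,P,d)\ge\frac{s^{d-1}}{(d-1)!}f(n,P,d)$ to transport that bound to $I(P,d)$. Your Step (4) is essentially the content of Lemma~\ref{converge}. But there are real gaps in Steps (2)--(3).

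First, and most seriously, Step (2) is not a construction --- it is a promise of a construction. You acknowledge it is ``the crux,'' but in the paper this is Lemma~\ref{exists}, which occupies the bulk of the argument: a randomized construction using dyadic hyper-rectangles producing an $N\times\cdots\times N$ matrix with $\Theta(N^{d-1/2})$ ones avoiding $R^{\ell,\ldots,\ell}$ \emph{as an interval minor}, with $N=2^{\Omega(\ell)}$. Nothing in your write-up engages with what that random construction looks like in $d$ dimensions, how to bound the number of ``bad'' interval-minor witnesses (the $h(x)$ counting of dyadic interval sets and the geometric-decay summand), or why the expected density survives conditioning on avoidance. A blind proof cannot simply defer this; it is the theorem.

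Second, you never mention interval minors, and this changes the mechanism. The paper does not build a matrix that avoids a specific permutation matrix $P$; it builds a matrix that avoids the \emph{all-ones} block $R^{\ell,\ldots,\ell}$ as an interval minor, and then observes that any permutation matrix $P$ containing $R^{\ell,\ldots,\ell}$ as an interval minor inherits $f(N,P,d)\ge m(N,R^{\ell,\ldots,\ell},d)$. This is precisely how one gets a \emph{family} of permutation matrices (which the theorem asks for), rather than a single carefully-engineered one. Your ``diagonal product / tensor power of $Q_m$'' framing also wavers between $k=\Theta(m)$ and $k=\Theta(m^d)$ several times before settling; the paper is unambiguous: $\ell=\lfloor k^{1/d}\rfloor$, $P$ is $k\times\cdots\times k$, and $P$ contains $R^{\ell,\ldots,\ell}$ as an interval minor because each $\ell^{d-1}\times\cdots\times\ell^{d-1}$ block of $P$ holds exactly one 1.

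Third, you invoke super-homogeneity without checking its hypothesis: Lemma~\ref{homo} requires $P$ to have a corner $1$-entry. The paper's family is constructed so that each $P$ has a corner $1$-entry precisely so that Lemmas~\ref{homo} and \ref{converge} apply. Your generic ``tensor-power of Fox's $Q_m$'' construction does not automatically supply this, so Step (4) has an unverified precondition. In summary: the limit-transfer argument is sound and matches the paper, but the heart of the proof --- the interval-minor random construction and the corner-entry-compatible choice of $P$ --- is absent.
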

The proof uses the super-homogeneity of extremal functions. In dimension two, the extremal function 
was shown to be super-additive \cite{PT}, i.e., $f(m+ n, P, 2) \geq f(m, P, 2) + f(n, P, 2)$. This was the key in showing the convergence of the sequence
$\{ {f(n,P, 2) \over n} \}$ for those  matrices $P$ whose extremal functions are $\Theta(n)$.  The  limit is the well-known F\"uredi-Hajnal limit \cite{FH}.

We note that the super-additivity of $f(n, P, 2)$ implies super-homogeneity, i.e., $f(s n, P, 2) \geq s f(n, P, 2)$ for every positive integer $s$. In higher dimensions, 
we show that $f(n, P, d)$ is
super-homogeneous of a higher degree.

A corner entry of a $k_1 \times \cdots \times k_d$ matrix $P = (p_{i_1, \ldots, i_d})$ is defined to be an  entry $p_{i_1, \ldots, i_d}$ located at a corner of $P$, i.e.,  where $i_{\tau}=1$ or $k_{\tau}$ for $1 \leq \tau \leq d$.

\begin{lem}
\label{homo}
If $P$ is a $d$-dimensional matrix with a corner $1$-entry, then $f(sn,P,d) \ge {s^{d-1} \over (d-1)!}f(n,P,d)$.
\end{lem}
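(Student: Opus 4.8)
The plan is to manufacture an $sn\times\cdots\times sn$ matrix $M$ avoiding $P$ out of $\binom{s+d-2}{d-1}$ disjoint translated copies of an extremal $n\times\cdots\times n$ matrix $A$ for $P$, placed in a carefully chosen family of blocks. First I would normalize the corner: reflecting $P$ along any coordinate axis induces a bijection between $n\times\cdots\times n$ matrices avoiding $P$ and those avoiding the reflected pattern which preserves the number of ones, so $f(n,P,d)$ is invariant under such reflections, and after performing them as needed I may assume the corner $1$-entry of $P$ is at $(1,\ldots,1)$, i.e. $p_{1,\ldots,1}=1$. Fix a $d$-dimensional $n\times\cdots\times n$ matrix $A$ avoiding $P$ with $f(n,P,d)$ ones.

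Next I would choose the blocks. Partition $\{1,\ldots,sn\}^d$ into $s^d$ congruent boxes $B(\vec b)=\prod_{\ell=1}^{d}\bigl((b_\ell-1)n,\,b_\ell n\,\bigr]$ indexed by $\vec b=(b_1,\ldots,b_d)\in\{1,\ldots,s\}^d$, and set $\mathcal B=\{\vec b:\ b_1+\cdots+b_d=s+d-1\}$. I would record two facts about $\mathcal B$. It is an antichain for the coordinatewise order, since $\vec b\le\vec b'$ with $\vec b\neq\vec b'$ forces $\sum_\ell b_\ell<\sum_\ell b'_\ell$. And substituting $c_\ell=b_\ell-1$ shows $|\mathcal B|$ equals the number of $(c_1,\ldots,c_d)\in\mathbb Z_{\ge0}^{d}$ with $\sum_\ell c_\ell=s-1$ (the constraint $c_\ell\le s-1$ being automatic), which is $\binom{s+d-2}{d-1}=\tfrac{s(s+1)\cdots(s+d-2)}{(d-1)!}\ge\tfrac{s^{d-1}}{(d-1)!}$. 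Now define $M$ by placing a copy of $A$ in the box $B(\vec b)$ for each $\vec b\in\mathcal B$ and zeros elsewhere; since these boxes are pairwise disjoint, $M$ has exactly $|\mathcal B|\,f(n,P,d)\ge\tfrac{s^{d-1}}{(d-1)!}f(n,P,d)$ ones. (For $d=2$ the set $\mathcal B$ is the anti-diagonal and this recovers the familiar $f(sn,P,2)\ge s\,f(n,P,2)$.) It then remains only to verify that $M$ avoids $P$.

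For the avoidance step, suppose $M$ contained $P$, witnessed in coordinate $\ell$ by indices $x^{(\ell)}_1<\cdots<x^{(\ell)}_{k_\ell}$, and set $\beta^{(\ell)}_m=\lceil x^{(\ell)}_m/n\rceil$, so that $m\mapsto\beta^{(\ell)}_m$ is nondecreasing. For every $1$-entry $p_{\vec j}=1$ of $P$ the entry of $M$ at $(x^{(1)}_{j_1},\ldots,x^{(d)}_{j_d})$ is a one, hence the block $(\beta^{(1)}_{j_1},\ldots,\beta^{(d)}_{j_d})$ lies in $\mathcal B$. Applying this to the corner entry $p_{1,\ldots,1}$ gives $\vec\beta^{\,0}:=(\beta^{(1)}_1,\ldots,\beta^{(d)}_1)\in\mathcal B$, while for an arbitrary $1$-entry $p_{\vec j}$ monotonicity yields $\beta^{(\ell)}_{j_\ell}\ge\beta^{(\ell)}_1=\beta^{\,0}_\ell$ for all $\ell$, so its block dominates $\vec\beta^{\,0}$ coordinatewise; since both lie in the antichain $\mathcal B$, they coincide. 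Thus every $1$-entry of the copy of $P$ sits inside the single box $B(\vec\beta^{\,0})$, which carries a copy of $A$, and passing to local coordinates exhibits $P$ inside $A$ — contradicting the choice of $A$. The one genuinely creative point, and the main obstacle, is recognizing that an \emph{antichain} of blocks is the right family: the corner hypothesis is precisely what forces all $1$-entries of a hypothetical copy into a common block, after which avoidance of $P$ by $A$ finishes the argument. The binomial count, the reflection normalization, and the routine degenerate cases (where some cross section of $P$ is empty, or $n$ is below a side length of $P$ so that $f(n,P,d)=n^d$ and lack of room in a single box already makes $M$ avoid $P$) are then bookkeeping.
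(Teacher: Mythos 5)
Your proof is correct and is essentially the paper's argument in different notation: your family $\mathcal B$ of blocks is exactly the support of the paper's matrix $M$ (the anti-diagonal hyperplane $i_1+\cdots+i_d=s+d-1$), your $M$ is exactly the Kronecker product $M\otimes N$, and your antichain observation is the same sum-equals-sum plus coordinatewise-dominance step the paper uses to force all entries of a hypothetical copy of $P$ into one block. The one small addition you make is the explicit reflection argument justifying the normalization $p_{1,\ldots,1}=1$, which the paper leaves as ``without loss of generality.''
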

\begin{proof}
 Without loss of generality, we assume that $p_{1, \ldots, 1}=1$ is the corner $1$-entry in $P$. Let $M$ be an $s \times \cdots\times s$ matrix with $1$-entries 
at the coordinates $(i_1, \ldots, i_d)$ where $i_1+ \cdots+i_d=s+d-1$ and $0$-entries everywhere else, so $M$ has ${s+d-2 \choose d-1} \ge {s^{d-1} \over (d-1)!}$ $1$-entries. 
Let $N$ be an $n \times \cdots \times n$ matrix that avoids $P$ and has $f(n,P,d)$ $1$-entries. It then suffices to prove that $M \otimes N$  avoids $P$.

Assume for contradiction that the Kronecker product $M \otimes N$ contains $P$. Pick an arbitrary $1$-entry $p^*$ in $P$ other than $p_{1, \ldots, 1}$.  Suppose that $p_{1, \ldots, 1}$ and $p^*$ are represented by $e_1$ and $e_2$  in $M \otimes N$, respectively.
We consider the $n \times \cdots \times n$ $S$ submatrices of $M \otimes N$. We may assume that $e_1$ and $e_2$ are in
the $S$-submatrices $S(i_1, \ldots, i_d)$ and $S(j_1, \ldots, j_d)$, respectively. Note that $i_1+ \cdots +i_d = j_1+ \cdots + j_d$. Since $p^*$
has larger coordinates than $p_{1, \ldots, 1}$ in $P$, entry $e_2$ must also have larger coordinates than $e_1$ in $M \otimes N$ and hence $i_{\tau} \leq j_{\tau}$ for $\tau = 1, 2, \ldots, d$. It then follows from $i_1+ \cdots +i_d = j_1+ \cdots + j_d$
that $i_{\tau} = j_{\tau}$ for $\tau= 1, 2, \ldots, d$, i.e., 
the two entries $e_1$ and $e_2$ must be in the same $S$ submatrix in $M \otimes N$. Since $p^*$ is an arbitrary $1$-entry other than $p_{1, \ldots , 1}$ in $P$, 
the $S$ submatrix contains $P$. But this is a contradiction since each nonzero $S$ submatrix in $M \otimes N$ is an exact copy of $N$, 
which avoids $P$. Thus $M \otimes N$ avoids $P$.
\end{proof}

Just as super-additivity leads to the F\"uredi-Hajnal limit in dimension two, super-homogeneity also produces an interesting result on limits.
\begin{lem}
\label{converge}
If $P$ is a $d$-dimensional matrix which contains a corner $1$-entry, then for any positive integer $m$,
$$I(P,d) \ge {1 \over (d-1)!} \ {f(m,P,d) \over m^{d-1}}.$$
\end{lem}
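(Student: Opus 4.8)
The plan is to combine Lemma \ref{homo} with the definition of the limit inferior. Fix a positive integer $m$. By Lemma \ref{homo}, since $P$ has a corner $1$-entry, for every positive integer $s$ we have
$$f(sm, P, d) \ge \frac{s^{d-1}}{(d-1)!} f(m, P, d).$$
Dividing both sides by $(sm)^{d-1}$ gives
$$\frac{f(sm, P, d)}{(sm)^{d-1}} \ge \frac{1}{(d-1)!} \cdot \frac{f(m, P, d)}{m^{d-1}}$$
for every $s \ge 1$. The right-hand side does not depend on $s$, so the subsequence $\{ f(sm, P, d)/(sm)^{d-1} \}_{s \ge 1}$ is bounded below by this constant.

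The final step is to pass from this subsequence bound to a bound on the full limit inferior. Since $\liminf_{n \to \infty} f(n,P,d)/n^{d-1}$ is the infimum of the set of subsequential limits, and the subsequence along multiples of $m$ has all of its subsequential limits at least $\frac{1}{(d-1)!} \cdot \frac{f(m,P,d)}{m^{d-1}}$, it follows that
$$I(P, d) = \liminf_{n \to \infty} \frac{f(n, P, d)}{n^{d-1}} \ge \liminf_{s \to \infty} \frac{f(sm, P, d)}{(sm)^{d-1}} \ge \frac{1}{(d-1)!} \cdot \frac{f(m, P, d)}{m^{d-1}},$$
which is exactly the claimed inequality. (The first inequality here uses the elementary fact that the limit inferior of a sequence is at most the limit inferior of any of its subsequences.)

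There is no real obstacle: the content is entirely in Lemma \ref{homo}, which has already been established, and what remains is the routine observation that a uniform lower bound along an arithmetic subsequence forces the same lower bound on the limit inferior of the whole sequence. The only point requiring a sentence of care is making explicit that $\liminf$ respects passage to subsequences in the direction we need, i.e. $\liminf_n a_n \le \liminf_k a_{n_k}$ for any subsequence $(a_{n_k})$; applying this with $n_k = km$ and the displayed subsequence bound completes the argument.
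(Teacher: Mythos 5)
Your argument has a genuine gap, and it is precisely at the step you flag as "routine."

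You establish the subsequence bound
$$\frac{f(sm,P,d)}{(sm)^{d-1}} \ge \frac{1}{(d-1)!}\cdot\frac{f(m,P,d)}{m^{d-1}}\qquad\text{for all }s\ge 1,$$
and then write $I(P,d) = \liminf_n f(n,P,d)/n^{d-1} \ge \liminf_s f(sm,P,d)/(sm)^{d-1}$, citing "the limit inferior of a sequence is at most the limit inferior of any of its subsequences." But that fact, correctly stated, gives the inequality in the \emph{opposite} direction: $\liminf_n a_n \le \liminf_k a_{n_k}$. A lower bound along the arithmetic subsequence $n = sm$ does \emph{not} by itself bound $\liminf_n a_n$ from below, since the infimum of accumulation points could be achieved along indices $n$ not divisible by $m$ (consider $a_n = 1$ when $m \mid n$ and $a_n = 0$ otherwise). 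So the final displayed chain of inequalities is false as written, and the proof does not close.

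The missing ingredient, which the paper uses, is the monotonicity of $f(\cdot,P,d)$ in $n$: write an arbitrary $n$ as $n = sm + r$ with $0 \le r < m$; then $f(n,P,d) \ge f(sm,P,d)$, so
$$\frac{f(n,P,d)}{n^{d-1}} \ge \frac{f(sm,P,d)}{(sm+r)^{d-1}} \ge \frac{s^{d-1}}{(d-1)!}\cdot\frac{f(m,P,d)}{(sm+r)^{d-1}},$$
and as $n\to\infty$ (hence $s\to\infty$ with $r$ bounded) the right side tends to $\frac{1}{(d-1)!}\,\frac{f(m,P,d)}{m^{d-1}}$. This controls \emph{every} $n$, not just multiples of $m$, which is what the liminf requires. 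Your computation along multiples of $m$ is correct and is the heart of the paper's argument too, but without the monotonicity step to bridge general $n$ down to $sm$, the conclusion about $I(P,d)$ does not follow.
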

\begin{proof}
For each fixed positive integer $m$, we write $n$ as $n=sm+r$, where $0 \le r<m$. Then 
$${f(n,P,d) \over n^{d-1}} = {f(sm+r,P,d) \over (sm+r)^{d-1}} 
 \ge  {f(sm,P,d) \over (sm+r)^{d-1}} 
\ge {s^{d-1} \over (d-1)!}{f(m,P,d) \over (sm+r)^{d-1}} \ ,$$
where we use Lemma \ref{homo} in the second inequality. Now we take the limit inferior of the left side as $n$ goes to $\infty$. Since $m$ is fixed and $r$ is bounded, $s={n-r \over m}$ goes to $\infty$ as well. Hence,
$$ \liminf_{n \to \infty}{f(n,P,d) \over n^{d-1}} \ge \lim_{s \to \infty}{s^{d-1} \over (d-1)!}{f(m,P,d) \over (sm+r)^{d-1}} 
= {1 \over (d-1)!}{f(m,P,d) \over m^{d-1}} \ . $$
\end{proof}

The following lemma gives a lower bound on the right hand side of the inequality in Lemma \ref{converge} for a particular $m = N$. The proof is based on Fox's ideas in his lecture \cite{Fox2} for $d=2$ case.
\begin{lem}
\label{exists}
There exists a $d$-dimensional  $N \times \cdots \times N$ matrix $A$, where $N=2^{\Omega(\ell)}$, that has $\Theta(N^{d-1/2})$ $1$-entries and avoids $R^{\ell,\ldots, \ell}$ as an interval minor.
\end{lem}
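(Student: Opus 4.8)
The plan is to construct the matrix $A$ explicitly via a random algebraic/combinatorial device and then verify two things: a good count of $1$-entries, and avoidance of $R^{\ell,\dots,\ell}$ as an interval minor. The guiding intuition is the classical Zarankiewicz-type lower bound: in two dimensions one uses a point set coming from a projective plane or from a norm-graph / polynomial construction with no large complete bipartite subgraph. Here I want a $d$-dimensional array that has $\Theta(N^{d-1/2})$ ones (so roughly one more "half dimension" than the trivial $N^{d-1}$) while no $\ell$ intervals in each of the $d$ directions span a combinatorial box that meets the support in all $\ell^d$ cells. The exponent $d-1/2$ suggests density $N^{-1/2}$ of ones, which matches taking $N = q^2$ for a prime power $q$ and putting ones at $(x_1,\dots,x_d)$ satisfying a single algebraic relation over $\mathbb{F}_q$ — e.g. $x_1 + x_2 + \cdots + x_d \equiv \phi(\text{something})$, or more robustly a random construction.

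Concretely, first I would fix $q$ a prime power with $N = q$ (or $N=q^2$, to be tuned so $N = 2^{\Omega(\ell)}$), index the coordinate set $[N]$ by $\mathbb{F}_q$, and place a $1$ at $(a_1,\dots,a_d)$ with probability $p = N^{-1/2}$ independently, or alternatively use a deterministic polynomial condition. The expected number of ones is $pN^d = N^{d-1/2}$, giving the lower count; a concentration argument (Chernoff) gives $\Theta(N^{d-1/2})$ with high probability. Second, and this is the crux, I must bound the probability that $A$ contains $R^{\ell,\dots,\ell}$ as an interval minor. By the definition of interval minor recalled in the excerpt, this means there are disjoint intervals $W_{i,1},\dots,W_{i,\ell}$ in direction $i$ for each $i$, such that every one of the $\ell^d$ product boxes $W_{1,j_1}\times\cdots\times W_{d,j_d}$ contains a $1$-entry. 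I would union-bound over all choices of the $d\ell$ intervals (there are at most $N^{2d\ell}$ such choices, crudely), and for a fixed choice bound the probability that all $\ell^d$ boxes are nonempty. If the boxes are small, each is nonempty with probability roughly (box volume)$\cdot p$, and multiplying over $\ell^d$ boxes gives a tiny probability; if the boxes are large, one partitions the interval system into a subsystem of small boxes and argues as before. Balancing, the failure probability is at most $N^{2d\ell} \cdot (\text{something})^{\ell^d}$, which is $o(1)$ precisely when $N$ is at most singly-exponential in $\ell$ in an appropriate sense — this is where the constraint $N = 2^{\Omega(\ell)}$ is forced, and choosing the constant in $\Omega(\ell)$ small enough makes the union bound succeed.

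The main obstacle I anticipate is making the interval-minor union bound actually close: naively there are $N^{2d\ell}$ interval configurations and this is too many unless the per-configuration probability decays fast enough, and the per-configuration probability is only small when the product boxes are forced to be small, which is not automatic — intervals can be long. The fix is a dyadic/pigeonhole reduction: inside any configuration realizing the interval minor, one can find $\ell$ disjoint sub-intervals in each direction of length at most $N/\ell$ (or even refine further down to length $N^{1/2}/\ell$ or so) that still carry an interval minor of $R^{\ell,\dots,\ell}$, because nonemptiness of a large box implies nonemptiness of one of its dyadic sub-boxes. After this reduction the relevant boxes have volume $\approx (N/\ell^2)^d$ or smaller, each nonempty with probability $\approx p\,(N/\ell^2)^d$, and the product over $\ell^d$ boxes beats the $N^{O(d\ell)}$ count when $\ell$ is large relative to $\log N$. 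A secondary technical point is replacing the random construction with a deterministic one if a clean explicit statement is wanted — a polynomial hypersurface over $\mathbb{F}_q$ with controlled incidences with combinatorial boxes (à la the norm-graph argument of Kollár–Rónyai–Szabó) would serve, but the probabilistic route is shorter and suffices for the asymptotic claim. Once $A$ is in hand, Lemma~\ref{converge} together with inequality~(\ref{fm}) and the fact that $m(n,R^{\ell,\dots,\ell},d) \ge |A| = \Theta(N^{d-1/2})$ will, after choosing the generating permutation matrix $P$ of size $\ell \times \cdots \times \ell$ to have a corner $1$-entry (every permutation matrix can be taken so, up to reindexing), deliver $I(P,d) \ge \frac{1}{(d-1)!}\cdot\frac{\Theta(N^{d-1/2})}{N^{d-1}} = \Theta(N^{1/2}) = 2^{\Omega(\ell)}$; setting $\ell$ in terms of $k$ via $\ell = \Theta(k^{1/d})$ (so that an $\ell^d$-sized pattern embeds in a $k\times\cdots\times k$ permutation matrix) yields the claimed $I(P,d) = 2^{\Omega(k^{1/d})}$ in Theorem~\ref{lower}.
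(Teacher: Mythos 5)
Your construction does not work, and the failure is structural, not a technicality in the union bound. If each entry of $A$ is a~$1$ independently with probability $p = N^{-1/2}$, then for the intended $N = 2^{\Omega(\ell)}$ the matrix $A$ contains $R^{\ell,\dots,\ell}$ as an interval minor with probability $1 - o(1)$, not $o(1)$. To see this, just take $W_{i,1},\dots,W_{i,\ell}$ to be a partition of $[N]$ into $\ell$ intervals of length $N/\ell$. Each box $W_{1,j_1}\times\cdots\times W_{d,j_d}$ has volume $(N/\ell)^d$, so it is empty with probability roughly $\exp\!\left(-p\,(N/\ell)^d\right) = \exp\!\left(-N^{d-1/2}/\ell^d\right)$, which is doubly-exponentially small in $\ell$ when $N = 2^{\Theta(\ell)}$. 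A union bound over the $\ell^d$ boxes then shows that all of them are nonempty almost surely, so the interval minor is present. No union bound over interval configurations can rescue this: the problem is not the number of configurations but the fact that the single most natural configuration already works.

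Your proposed ``dyadic reduction'' is also not sound as stated. Nonemptiness of a large box does imply nonemptiness of some dyadic sub-box, but the surviving sub-box depends on $(j_1,\dots,j_d)$; there is no reason the $\ell^d$ choices are consistent, i.e.\ come from a \emph{common} family of $\ell$ small intervals per axis. So you cannot pass from an interval-minor witness with large boxes to one with uniformly small boxes. Similarly, the polynomial/norm-graph idea is aimed at the wrong target: Kollár--Rónyai--Szabó constructions avoid $R^{\ell,\dots,\ell}$ (or $K_{s,t}$) as an ordinary \emph{submatrix}, but avoidance as an interval minor is a strictly \emph{stronger} requirement (singleton intervals recover submatrix containment). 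A matrix can avoid $R^{\ell,\dots,\ell}$ as a submatrix and still trivially contain it as an interval minor.

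The paper's proof sidesteps all of this by not choosing $1$-entries independently. Instead it picks a random family $\mathcal{R}$ of \emph{dyadic hyper-rectangles}, each included with a small probability $q$, and declares $a_{i_1,\dots,i_d}=1$ precisely when $(i_1,\dots,i_d)$ is covered by none of the chosen rectangles. The $0$-entries therefore appear in large, aligned dyadic blocks rather than independently, and this is exactly what lets the union bound close: if $A$ contains $R^{\ell,\dots,\ell}$ as an interval minor via intervals $W_{i,j}$, one passes to the smallest dyadic intervals $I_{i,j}\supseteq W_{i,j}$, observes that the corresponding dyadic hyper-rectangles (and all their dyadic ancestors) must be absent from $\mathcal{R}$, and sums $(1-q)^{x_1\cdots x_d}$ weighted by a carefully bounded count $h(x_1)\cdots h(x_d)$ of dyadic interval systems. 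The density calculation $(1-q)^{(r+1)^d} N^d = \Theta(N^{d-1/2})$ also only makes sense in this dependent model. If you want to reconstruct the proof, the essential idea to internalize is that the randomness should live on the coarse dyadic scale (the ``holes''), not on individual entries.
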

\begin{proof}

We prove the lemma for $\ell$ that are multiples of $20$ and the result can then be easily extended to all $\ell$. Let $r={\ell \over 20}$, $q={\ln 2 \over 2r^{d-1}}$, and $N=2^r$. 

Define a dyadic interval to be a set of consecutive integers of the form $\{ (s-1)2^t+1, \ldots, s2^t \}$ for nonnegative integers $s$ and $t$, and a dyadic hyper-rectangle to be the Cartesian product of $d$ dyadic intervals. We consider only dyadic intervals which are subsets of $\{1, \ldots, N \}$.
Note that there are exactly $\sum_{t=0}^{r}2^t=2N-1$ such dyadic intervals and $(2N-1)^d$ such dyadic hyper-rectangles. There are $(r+1)^d$ dyadic hyper-rectangles 
containing each lattice point $(i_1, \ldots , i_d)$, where $i_j = 1, 2, \ldots , N$ and $j= 1, 2, \ldots , d$,  since each $i_j$ is contained in exactly $r+1$ dyadic intervals. 

Let $\cal R$ be a random collection of these dyadic hyper-rectangles, each included independently with probability $q$. Define $A$ to be the $N\times \cdots \times N$ $d$-dimensional matrix such that $a_{i_1, \ldots, i_d}=1$ if $(i_1, \ldots, i_d)$ is not contained in any dyadic hyper-rectangle of $\cal R$ and $a_{i_1, \ldots, i_d}=0$ otherwise. The expected number of $1$-entries in $A$ is 
$$(1-q)^{(r+1)^d}N^d =[(1 - q)^{1/q}]^{q (r+1)^d} N^d = \Theta(e^{-qr^d}N^d)=\Theta(2^{-r/2}N^d)=\Theta(N^{d-1/2}),$$
where we use $(1 - q)^{1/q} = \Theta(e^{-1})$ for small $q$ in the second  equality.

Denote by $X$ and $Y$ the events that $A$ contains and avoids $R^{\ell, \ldots, \ell}$ as an interval minor, respectively. We estimate the probability $\mathbb{P}(X)$. If $B$ is a set of dyadic intervals, let $\chi(B)$ be the number of dyadic intervals that contain at least one interval in $B$ as a subset. Then we define $h(x)$ to be the  number of sets $B$ containing $\ell$ dyadic intervals such that $\chi(B)=x$. 

If $A$ contains $R^{\ell, \ldots, \ell}$ as an interval minor, then there are  intervals of consecutive integers, denoted by $W_{i,1}, \ldots, W_{i, \ell}$, partitioning the set $\{ 1, 2, \ldots , N \}$  in the $i$th dimension such that every submatrix $W_{1, j_1} \times W_{2, j_2} \times \cdots \times W_{d, j_d}$ of $A$ contains at least one $1$-entry. We denote by $I_{i,j}$ the unique smallest-length dyadic interval which contains $W_{i,j}$. If there are $x_i$ dyadic intervals which contain at least one of the dyadic intervals in $B_i=\{I_{i,1}, \ldots, I_{i,\ell} \}$, then $x_i \geq \ell$ and there are $h(x_i)$ possible sets $B_i$. Thus there are $h(x_1) \cdots h(x_d)$ choices for $B_1, \ldots, B_d$ and there are $x_1 \cdots x_d$ dyadic hyper-rectangles which contain at least one of the $\ell^d$ dyadic hyper-rectangles of the form $I_{1, i_1}\times \cdots \times I_{d, i_d}$. Since these $x_1 \cdots x_d$ dyadic hyper-rectangles contain $1$-entries of $A$, none of them is in $\cal R$. Hence, the probability 
\begin{equation}
\label{P(X)}
\mathbb{P}(X)  \leq \sum_{x_1,\ldots,x_d\ge \ell} (1-q)^{x_1\cdots x_d}h(x_1)\cdots h(x_d).
\end{equation}

To find a bound on $h(x)$, we estimate the number of sets $U=\{u_1, \ldots, u_{\ell}\}$ of dyadic intervals such that $\chi(U)=x$. Let $v_1, \ldots, v_\ell$ be the integers such that $v_1=\chi(\{u_1 \})$ and $v_i=\chi(\{u_1, \ldots, u_i\})-\chi(\{u_1, \ldots, u_{i-1}\})$, for $i = 2, \ldots , \ell$. Since $v_1+\cdots+v_{\ell}=x$, there are at most ${x+\ell-1 \choose \ell-1}$ possible values for $v_1, \ldots, v_{\ell}$. Given $v_1, \ldots, v_{\ell}$, there are at most $2^{v_1}$ choices for $u_1$ and at most $2^{v_{i+1}}\chi(\{u_1, \ldots, u_i\})$ choices for $u_{i+1}$. Calculating the number of possible choices for $U$ gives
$$h(x) \le{{x+\ell-1 \choose \ell-1}2^{v_1}\prod_{i=1}^{\ell-1} 2^{v_{i+1}}\chi(\{u_1, \ldots, u_i\}) \over \ell !} \le {{x+\ell-1 \choose \ell-1}2^x x^{\ell}  \over \ell !}.$$ 
Substituting this into (\ref{P(X)}) yields
\begin{equation}
\mathbb{P}(X)  \leq \sum_{x_1,\ldots,x_d\ge \ell} (1-q)^{x_1\cdots x_d} \prod_{i=1}^{d} {{x_i+\ell-1 \choose \ell-1}2^{x_i} {x_i}^{\ell}  \over \ell !} \ .  \nonumber
\end{equation}
The summand 
 $$r(x_1,\ldots,x_d)=(1-q)^{x_1\cdots x_d}\prod_{i=1}^{d}{{x_i+\ell-1 \choose \ell-1}2^{x_i} x_i^{\ell}  \over \ell !}$$
is a symmetric function of its variables.
Note that the ratio ${r(x_1+1,\ldots, x_d) / r(x_1,\ldots,x_d)}$ is equal to 
$$2(1-q)^{x_2 \cdots x_d}{(x_1+\ell) \over x_1+1}(1+{1\over x_1})^{\ell} \le 4 e^{-qx_2\cdots x_d}(1+{1\over x_1})^{x_1}\le e^{-q\ell^{d-1}}4e=2^{-20^{d-1} / 2}4e \le 1/2 \ ,$$
where we use $(1 - q)^{1/q} \leq 1/e$ for small $q > 0$ and $(1 + 1/x_1)^{x_1} \leq e$ for $x_1 \geq 1$. We also note that $$r(\ell, \ldots , \ell) = (1-q)^{\ell^d}\left[{2\ell-1\choose \ell-1}2^{\ell}{\ell^{\ell} \over \ell!} \right]^d  \le  e^{-q\ell^d}(2^{2\ell-1}2^{\ell}e^{\ell})^d \le 
2^{-20^{d-1}\ell/2 } (2^{3\ell - 1}e^{\ell})^d \le (2N)^{-d}, $$
where we use Stirling's inequality and ${2\ell-1 \choose \ell-1}\le 2^{2\ell-1}$.
We now use the 
symmetry of $r(x_1, \cdots, x_d)$ to obtain
$$ \mathbb{P}(X)  \le \sum_{x_1, \ldots, x_d\ge \ell} r(x_1,\ldots, x_d)  \le \left(\sum_{i=0}^{\infty}(1/2)^i \right)^d r(\ell,\ldots,\ell) 
\le 2^d  (2N)^{-d} = N^{-d}. $$

We now estimate conditional expectation $\mathbb{E}( \xi | Y)$, where $\xi = |A|$.  Note that $\mathbb{E}(\xi | Y)\mathbb{P}(Y)=\mathbb{E}(\xi)-\mathbb{E}(\xi | X) \mathbb{P}(X) \ge \Theta(N^{d-1/2})-N^d N^{-d} = \Theta(N^{d-1/2})$ so $\mathbb{E}(\xi | Y)=\Theta(N^{d-1/2})$.
Thus, there exists an $A$ that avoids $R^{l, \ldots,l}$ as an interval minor and has at least $\Theta(N^{d-1/2})$ $1$-entries.
\end{proof}

We are now ready to prove Theorem \ref{lower}.
\begin{proof} [Proof of Theorem \ref{lower}]

Let $\ell=\lfloor k^{1/d} \rfloor$ be the largest integer less than or equal to $ k^{1/d}$. There is a family  of $d$-dimensional permutation matrices of size $\ell^d \times \cdots \times \ell^d$ that contain $R^{\ell, \ldots , \ell}$ as an interval minor and have at least one corner $1$-entry. To see this, 
many such permutation matrices can be constructed so that they have exactly one $1$-entry in each of their $S$-submatrices of size $\ell^{d-1} \times \cdots \times \ell^{d-1}$, including a corner $1$-entry.

Since $k \geq \ell^d$, there is a family of  permutation matrices $P$ of size $k \times \cdots \times k$ that contain $R^{\ell, \ldots , \ell}$ as an interval minor and have at least one corner $1$-entry.
Each $P$ has a corner $1$-entry, so we can apply Lemma \ref{converge} to obtain
\begin{equation}
\label{key}
I(P,d) \ge {1 \over (d-1)!} \ {f(N,P,d) \over N^{d-1}} ,
\end{equation}
where $N$ can be chosen to be the positive integer given in Lemma 4.9. 

Matrix $P$ contains $R^{\ell,\ldots,\ell}$ as an interval minor, 
so $f(N,P,d)\ge m(N,R^{\ell, \ldots, \ell},d)$,
which along with (\ref{key}) and Lemma \ref{exists} yields 
$$I(P,d)  \ge {1 \over (d-1)!} \ {m(N, R^{\ell, \ldots, \ell}, d) \over N^{d - 1}} 
 \geq \Theta(N^{1 \over 2}) 
= 2^{\Omega(\ell)}
= 2^{\Omega(k^{1 / d})}.
$$
This completes the proof of Theorem \ref{lower}.
\end{proof}

\section{Conclusions and future directions}
\label{conclusion}
We obtained non-trivial lower and upper bound on $f(n,P,d)$ when $n$ is large for block permutation matrices $P$. In particular, we established the tight bound $\Theta(n^{d-1})$ on $f(n,P,d)$ for every $d$-dimensional tuple permutation matrix $P$. 
We improved the previous upper bound on the limit superior of the sequence $\{ {f(n,P,d) \over n^{d-1}}\}$ for 
all permutation and tuple permutation matrices. We used the super-homogeneity of the extremal function to show that 
the limit inferior is exponential in $k$ for a family of $k \times \cdots \times k$ permutation matrices. Our results substantially advance the extremal theory of matrices. We believe that super-homogeneity is fundamental to pattern avoidance in multidimensional matrices.

One possible direction for future research would be to strengthen the super-homogeneity as expressed in Lemma \ref{homo} to
$f(sn, P, d) \geq s^{d-1} f(n, P, d)$. We have successfully tested this super-homogeneity on the identity matrix and the matrices whose $1$-entries are on rectilinear paths. If this super-homogeneity is true 
for permutation matrices $P$, we can then use a Fekete-like lemma to show the convergence of the sequence $\{ {f(n,P,d) \over n^{d-1}} \}$.

Another possible direction would be to extend Theorem \ref{lower} from a family of permutation matrices to almost all permutation matrices. We think this becomes possible if  the corner $1$-entry condition is removed in Lemmas \ref{homo} and \ref{converge}.

\section{Acknowledgments}

We would like to thank Professor Jacob Fox for valuable discussions about this research. The first author was supported by the NSF graduate fellowship under grant number 1122374. The research of the second author was supported in part by the Department of Mathematics, MIT through PRIMES-USA 2014. It was also supported in part  by the Center for Excellence in Education and the Department of Defense through RSI 2014.

\bibliographystyle{amsplain}

\end{document}